\theoremstyle{definition}
\newtheorem{thm}{Theorem}[section]
\newtheorem{dfn}[thm]{Definition}
\newtheorem{lem}[thm]{Lemma}
\newtheorem{prp}[thm]{Proposition}
\newtheorem{cor}[thm]{Corollary}
\newtheorem{rmk}[thm]{Remark}
\newtheorem*{thm*}{Theorem}
\newtheorem*{cor*}{Corollary}
\newtheorem*{prp*}{Proposition}
\newtheorem*{ntt}{Notation}
\newtheorem{thmint}{Theorem}
\newcommand{\N}{\mathbb{N}}
\newcommand{\R}{\mathbb{R}}
\newcommand{\Q}{\mathbb{Q}}
\newcommand{\inn}{\in\mathbb{N}}
\newcommand{\e}{\varepsilon}
\newcommand{\de}{\delta}
\newcommand{\al}{\alpha}
\newcommand{\be}{\beta}
\newcommand{\la}{\lambda}
\newcommand{\adxk}{\al((x_k)_k)}
\newcommand{\adyk}{\al((y_k)_k)}
\newcommand{\adyn}{\al((y_n)_n)}
\newcommand{\adzk}{\al((z_k)_k)}
\newcommand{\adwk}{\al((w_k)_k)}
\newcommand{\Sn}{\mathcal{S}_n}
\newcommand{\Sj}{\mathcal{S}_j}
\newcommand{\Sm}{\mathcal{S}_m}
\newcommand{\X}{\mathfrak{X}_{\mathcal{T}}}
\newcommand{\Wa}{W_\al}
\newcommand{\T}{\mathcal{T}}
\newcommand{\WT}{W_{\mathcal{T}}}
\newcommand{\ac}{$\al_c$}
\newcommand{\ic}{$\mathcal{IC}$}
\newcommand{\co}{$\mathcal{CO}$}
\newcommand{\ir}{$\mathcal{IR}$}
\DeclareMathOperator{\supp}{supp}
\DeclareMathOperator{\ran}{ran}
\long\def\symbolfootnote[#1]#2{\begingroup%
\def\thefootnote{\fnsymbol{footnote}}\footnote[#1]{#2}\endgroup}
\begin{document}

\title[A dual method of constructing HI spaces]{A dual method of constructing hereditarily indecomposable Banach spaces}
%\dedicatory{}

\author[S. A. Argyros]{Spiros A. Argyros}
\address{National Technical University of Athens, Faculty of Applied Sciences,
Department of Mathematics, Zografou Campus, 157 80, Athens, Greece}
\email{sargyros@math.ntua.gr}

\author[P. Motakis]{Pavlos Motakis}
\address{National Technical University of Athens, Faculty of Applied Sciences,
Department of Mathematics, Zografou Campus, 157 80, Athens, Greece}
\email{pmotakis@central.ntua.gr}

\symbolfootnote[0]{\textit{2010 Mathematics Subject
Classification:} Primary 46B03, 46B06, 46B25, 46B45, 47A15}

\symbolfootnote[0]{\textit{Key words:} Spreading models, Strictly
singular operators, Invariant subspaces, Hereditarily indecomposable spaces}
\symbolfootnote[0]{This research was supported by program API$\Sigma$TEIA-1082.}

%\symbolfootnote[0]{\textit{2010 Mathematics Subject Classification:} Primary 46B03, 46B06,  46B07,  46B25, 46B45}
%\symbolfootnote[0]{\textit{Key words:} Spreading models, Finite block representability, Krivine's Theorem}
%\symbolfootnote[0]{The first named author acknowledges support from the Lenfest Summer Grant at Washington and Lee University.}
%\symbolfootnote[0]{Research of the second author was supported by the National Science Foundation.}
%\symbolfootnote[0]{The authors would like to acknowledge the support of program API$\Sigma$TEIA-1082.}

\begin{abstract}
A new method of defining hereditarily indecomposable Banach spaces is presented. This method provides a unified approach for constructing reflexive HI spaces and also HI spaces  with no reflexive subspace. All the spaces presented here satisfy the property that the composition of any two strictly singular operators is a compact one. This yields the first known example of a Banach space with no reflexive subspace such that every operator has a non-trivial closed invariant subspace.
\end{abstract}

\maketitle

\section{Introduction}
Defining a hereditarily indecomposable (HI) Banach space is not an easy task. It requires the definition of a subset $W$ of $c_{00}(\N)$ (the space of real sequences which are eventually zero), which in turn, acting as a set of functionals on $c_{00}(\N)$, defines an HI norm. In all classical constructions the resulting space admits the unit vector basis of $c_{00}(\N)$ as a boundedly complete Schauder basis. This appears to be an inevitable consequence of the saturation of the set $W$ under certain operations which yield, for every $n$ in $\N$, a lower bound $C_n$ of $\|\sum_{k=1}^nx_k\|$,  for every sequence of successive normalized block vectors  $(x_k)_{k=1}^n$, and $\lim_n C_n = \infty$.

There are two known types of HI spaces whose basis is not boundedly complete. The first one concerns the $\mathcal{L}_\infty$ HI space $\mathfrak{X}_{K}$ which appeared in \cite{AH} and is the result of mixing the Bourgain-Delbaen method (\cite{BD}) of constructing $\mathcal{L}_\infty$-spaces and the Gowers-Maurey corresponding one (\cite{GM}) of constructing HI spaces. The basis of the space is shrinking but not boundedly complete. However, this is a consequence of the $\mathcal{L}_\infty$ structure and not of the HI property of the space. In particular, every block sequence in the space has a boundedly complete subsequence, hence the space is reflexively saturated.

The second type concerns HI spaces with no reflexive subspace. All such spaces whose norm is induced by a norming set $W$ have a boundedly complete Schauder basis.  This class includes spaces such as the Gowers Tree space \cite{G} and the spaces which appeared in \cite{AAT}. The predual of one of the spaces presented in \cite{AAT} is also an HI space without reflexive subspaces. This space admits a shrinking basis and none of its subspaces admits a boundedly complete basis. This predual is essentially different to a space which is induced by a saturated norming set $W$. The latter, as we have explained, always yields spaces with a boundedly complete basis.

The preceding discussion leads to the following question. Does there exist a method of defining a norming set $W$ such that the resulting space admits a shrinking Schauder basis and no subspace admits a boundedly complete one? This problem is directly related to the problem of the existence of a $\mathcal{L}_\infty$-space which is HI and has no reflexive subspace. Indeed, any HI $\mathcal{L}_\infty$-space must have separable dual (\cite{LS}, \cite{P}) and if moreover it does not contain reflexive subspaces, then it does not contain a boundedly complete basic sequence. More generally, every Banach space with a boundedly complete basis and separable dual is reflexively saturated (\cite{JR}).

The aim of the present paper is to answer the first problem by providing a new method of defining a norming set $W$, which yields an HI space with a shrinking basis with no boundedly complete basic sequence. We perceive this method as the dual method of the classical one. This new approach allows us to affirmatively answer the second problem. Namely, there exists a $\mathcal{L}_\infty$ HI space with no reflexive subspace. This result will appear in a forthcoming paper. Our goal is to use a more classical setting in order to present the definition of the norming set and its consequences, some of which are rather unexpected.

The definition of the norming set $W$ uses an unconditional frame, namely the Tsirelson-like space with constraints $T{(1/2^n,\Sn,\al)_n}$. Norms which are saturated under constraints  were introduced in \cite{ABM} and \cite{AM1} and are rooted in the earlier work of E.Odell and Th. Schlumprecht (\cite{OS1}, \cite{OS2}). The norm of $T{(1/2^n,\Sn,\al)_n}$ is described by the following implicit formula: if $x\in c_{00}(\N)$ then
\begin{equation}\label{implod... i mean implicit formula}
\|x\| = \max\left\{\|x\|_\infty,\sup\frac{1}{2^n}\sum_{q=1}^d\|E_qx\|_{m_q}\right\}
\end{equation}
where the supremum is taken over all $n\inn$, $\Sn$-admissible successive subsets $(E_q)_{q=1}^d$ of $\N$ and sequences $(m_q)_{q=1}^d$ of $\N$ so that $m_q > 2^{\max E_{q-1}}$ for $q=2,\ldots,d$. The $m$-norms appearing in \eqref{implod... i mean implicit formula} are defined as follows. For $m\inn$ and $x\in c_{00}(\N)$:
$$\|x\|_m = \frac{1}{m}\sup \sum_{i=1}^m\|G_ix\|$$
where the supremum is taken over all successive subsets $(G_i)_{i=1}^m$ of $\N$.

The $\|\cdot\|_m$ norms, $m\inn$, which appear in the definition above, do not contribute to the norm of the element $x$, in fact they acts as constraints. This results in the neutralization of the operations $(1/2^n,\Sn)$ on certain sequences and thus, $c_0$ spreading models become abundant. As a consequence, every Schauder basic sequence in the space admits either an $\ell_1$ or a $c_0$ spreading model and both of them are admitted by every infinite dimensional subspace. This norm and its variants have been recently established as an effective tool for answering certain problems on the structure of Banach spaces and their spaces of operators \cite{ABM}, \cite{AM1}, \cite{AM2}, \cite{BFM}.

The norm on $T(1/2^n,\Sn,\al)_n$ is induced by the norming set $\Wa$ which is the minimal subset of $c_{00}(\N)$ containing the basis $(e_i^*)_i$, all $\al$-averages of its elements, i.e. averages of successive elements of $\Wa$, and it is closed under the operations $(1/2^n,\Sn,\al)$ for every $n\inn$. The latter means that for every very fast growing family $(\al_q)_{q=1}^d$ of successive $\al$-averages, which is $\Sn$-admissible, the functional $f = (1/2^n)\sum_{q=1}^d\al_q$ is in $\Wa$. Any such $f$ is called a weighted functional with $w(f) = n$. Hence, the set $\Wa$ includes the elements of the basis, $\al$-averages and weighted functionals.

The norming set $W$ will be chosen to be a subset of $\Wa$ and its definition is based on a tree $\mathcal{U}$, called the universal tree. This tree consists of finite sequences $\{(f_k,x_k)\}_{k=1}^d$, where $(f_k)_{k=1}^d$ is a sequence of successive non-zero weighted functionals in $\Wa$, $(x_k)_{k=1}^d$ is a sequence of successive non-zero vectors in $c_{00}(\N)$ with rational coefficients and for each $1<m\leqslant d$ the weight of $f_m$ is uniquely defined by the sequence $\{(f_k,x_k)\}_{k=1}^{m-1}$.

We will consider a class of subtrees $\T$ of the universal tree $\mathcal{U}$. Each tree $\T$ in this class is either well founded and satisfies certain additional properties or $\T = \mathcal{U}$.  For such a tree $\T$ we define the norming set $\WT$. It is worth pointing out that for a well founded tree $\T$ the space $\X$, induced by the set $\WT$, is a reflexive HI space, while for $\T = \mathcal{U}$ the space $\mathfrak{X}_\mathcal{U}$ admits a shrinking basis and does not contain a reflexive subspace. It is also interesting, and rather unexpected,  that the reflexive and non-reflexive cases have a unified approach, as it is presented in the rest of the paper. Note that the Gowers Tree type HI spaces with no reflexive subspace (\cite{G},\ \cite{AAT}) have substantially increased complexity, concerning their definition as well as their proofs, compared to  the corresponding reflexive HI spaces.

For a subtree $\T$ of the universal tree $\mathcal{U}$, as above, we define the norm of the space $\X$, which is very similar to the norm of the space $T(1/2^n,\Sn,\al)_n$. Namely, the norm of $\X$ is described by the implicit formula \eqref{implod... i mean implicit formula}, the difference lying in the definition of $\|\cdot\|_m$ norms, where
$$\|x\|_m = \sup\left\{\frac{1}{m}\sum_{i=1}^mg_i(x):\;\frac{1}{m}\sum_{i=1}^mg_i\;\mbox{is an \ac-average}\right\}$$
and \ac-averages are $\al$-averages which are inductively defined. In other words, to define the norm of $\X$ we impose some further restrictions on the $\al$-averages used as constraints. Alternatively, the norming set $\WT$ is the minimal subset of $c_{00}(\N)$ containing the basis, the \ac-averages and all $f = (1/2^n)\sum_{q=1}^d\al_q$ where $(\al_q)_{q=1}^d$ is a very fast growing and $\Sn$-admissible family of \ac-averages.

Let us observe that in the definition of $\WT$ the conditional structure, which yields the HI property of the space $\X$, is contained in the \ac-averages.  The space $\X$ satisfies the following property. If $\T$ is a well founded subtree of $\mathcal{U}$, for every block sequence with rational coefficients $(y_i)_i$ in $\X$ there exist a further finite block sequence $(x_k)_{k=1}^d$, with $1/2 < \|x_k\| \leqslant 10$, and $(f_k)_{k=1}^d$ in $\WT$, such that $\{(f_k,x_k)\}_{k=1}^d$ is a maximal element of $\T$ and $\|\sum_{k=1}^dx_k\| \leqslant 27$. If $\T = \mathcal{U}$, the corresponding result holds in the space $\mathfrak{X}_\mathcal{U}$ for a branch $\{(f_k,x_k)\}_{k=1}^\infty$ of $\mathcal{U}$ such that $\|\sum_{k=1}^d x_k\| < 27$, for all $d\inn$.

Below we summarize the properties of the space $\X$, in the case the tree $\T$ is well founded.
\begin{thmint}\label{a}
If $\T$ is well founded, then the space $\X$ satisfies the following properties.
\begin{itemize}

\item[(i)] The space $\X$ has a bimonotone Schauder basis, it is hereditarily indecomposable and reflexive.

\item[(ii)] Every Schauder basic sequence in $\X$ admits either $\ell_1$ or $c_0$ as a spreading model and every infinite dimensional subspace of $\X$ admits both of these types of spreading models.

\item[(iii)] For every block subspace $X$ of $\X$ and every bounded linear operator $T:X\rightarrow X$, there is $\la\in\R$ so that $T - \la I$ is strictly singular.

\item[(iv)] For every infinite dimensional subspace $X$ of $\X$ the ideal of the strictly singular operators $\mathcal{S}(X)$ is non separable.

\item[(v)] For every subspace $X$ of $\X$ and every strictly singular operators $S$, $T$ on $X$, the composition $TS$ is compact.

\item[(vi)] For every block subspace $X$ of $\X$, every non-scalar bounded linear operator $T:X\rightarrow X$ admits a non-trivial closed hyperinvariant subspace.

\end{itemize}
\end{thmint}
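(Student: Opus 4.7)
I will tackle the six items in the order they are stated, exploiting the coding of block sequences into branches of $\mathcal{T}$ and the resulting upper and lower bounds on sums of block vectors, together with the standard tools available for norms saturated under constraints.

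Bimonotonicity of the basis in (i) should follow from closure of $\WT$ under interval restriction: a restriction of a weighted functional or of an \ac-average is, up to a factor at most $1$, an element of the same type, giving $\|P_E\|\leqslant 1$ for every interval projection $P_E$. For the HI property I combine two ingredients: (a) the maximal-element statement recalled just before the theorem, which supplies, for every block sequence with rational coefficients, a further block sequence $(x_k)_{k=1}^d$ with $\|x_k\|>1/2$ but $\|\sum_{k=1}^d x_k\|\leqslant 27$; and (b) a matching lower bound produced by a carefully chosen weighted functional in $\WT$ acting on constant-coefficient block sums. Alternating RIS-type sequences inside two arbitrary block subspaces $X,Y$ and applying the two bounds to expressions with opposite signs yields the Gowers--Maurey ratio $\|u+v\|/\|u-v\|\to\infty$. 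Reflexivity splits into two pieces: a boundedly complete estimate coming from the same weighted functionals acting on block sums, and a shrinking estimate where well-foundedness of $\mathcal{T}$ enters decisively, bounding the tree depth of any norming argument and preventing the basis from generating an $\ell_1$ block subsequence indexed by an infinite branch.

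Item (ii) is the standard spreading model dichotomy for Tsirelson-type norms with $\alpha$-constraints developed in \cite{ABM,AM1}: a block sequence either contains a rapidly increasing subsequence witnessing $\ell_1$ via a single $(1/2^n,\Sn)$-operation, or the \ac-averages neutralize that operation and a $c_0$ spreading model is extracted. Both types then appear in every infinite dimensional subspace by running each construction separately inside a given block subspace. For (iii), once (ii) and HI are in place, the scalar-plus-strictly-singular dichotomy follows the familiar HI template: if $T:X\to X$ were not of the form $\la I+S$ with $S$ strictly singular for any $\la$, one extracts two essentially disjoint block subspaces on which $T$ acts with distinct asymptotic multipliers, contradicting HI. Item (iv) is obtained by exhibiting on each infinite dimensional subspace an uncountable family of strictly singular operators indexed by branches of a dyadic subtree and pairwise bounded away from each other, with the abundance of $c_0$ spreading models from (ii) supplying the required room.

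The decisive and most demanding point is (v): one must show that a strictly singular $S$ on a subspace sends any sufficiently thin block sequence to one whose \ac-content collapses, so that the image of a second strictly singular operator factors through a compactly embedded subspace. Finally, (vi) follows from (iii) and (v) in the usual way: given a non-scalar $T$ on a block subspace, write $T=\la I+S$ with $S$ strictly singular and nonzero; by (v), $S^2$ is compact, so if $S^2=0$ then $\ker S$ is a nontrivial closed subspace hyperinvariant for $T$, while if $S^2\neq 0$ Lomonosov's theorem applied to $S$ and the nonzero compact $S^2$ delivers a nontrivial closed hyperinvariant subspace for $S$, which is hyperinvariant for $T$ since $T-\la I=S$. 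The principal obstacle I anticipate throughout is property (v): establishing that the composition of two strictly singular operators is compact requires a sharp quantitative understanding of how such operators interact with the \ac-averaged structure, and is expected to consume the bulk of the work.
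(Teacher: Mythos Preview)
Your outline is broadly aligned with the paper for items (ii), (iv), (v), and (vi), and your sketch of (vi) matches the paper's argument essentially verbatim. There is, however, a genuine conceptual error in your plan for reflexivity in (i), and it is worth flagging because it reveals a mistaken mental model of where well-foundedness enters.

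You write that the shrinking estimate is ``where well-foundedness of $\mathcal{T}$ enters decisively,'' while the boundedly complete estimate comes from weighted functionals acting on block sums. This is exactly backwards. In the paper the basis is shrinking for \emph{every} choice of $\mathcal{T}$, including $\mathcal{T}=\mathcal{U}$; this follows from the s.c.c.\ upper estimate (Corollary~\ref{estimate on scc}) and has nothing to do with the tree structure. What well-foundedness buys is the \emph{boundedly complete} half: if the basis were not boundedly complete one would have a seminormalized block sequence $(x_k)_k$ with uniformly bounded partial sums, and one then norms these sums from below by building \ac-averages $\alpha_F=(1/\#F)\sum_{k\in F}f_k$ with \emph{no} sign changes. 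Such non-alternating averages are available precisely when the tree is well founded (Lemma~\ref{you can always find large enough averages of large norm but in the case the tree is well founded then you can have them without alternating signs}); when $\mathcal{T}=\mathcal{U}$ one is forced into alternating-sign \co-averages, which kill the lower bound and allow the non-trivially weakly Cauchy sequences that make $\mathfrak{X}_\mathcal{U}$ non-reflexive. Your proposed mechanism---``bounding the tree depth of any norming argument and preventing the basis from generating an $\ell_1$ block subsequence indexed by an infinite branch''---does not correspond to anything in the actual proof and would not work: infinite branches produce \emph{summing-basis} behaviour, not $\ell_1$ behaviour, and their absence is what guarantees bounded completeness, not shrinkingness.

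A smaller remark on (iii): the paper does not deduce scalar-plus-strictly-singular from the HI property as you suggest. Instead it proves directly (Lemma~\ref{yeh yeh, i got it}) that if the diagonal part $P_{\ran x_k}Tx_k$ of $T$ along a block basis is norm-null then $T$ is strictly singular, and then takes an accumulation point of the diagonal scalars. Your ``two asymptotic multipliers contradict HI'' sketch is the classical Gowers--Maurey route, but note that over $\mathbb{R}$ the HI property alone does not immediately yield scalar-plus-strictly-singular on arbitrary subspaces (cf.\ Remark~\ref{complex version}); the paper's direct argument is what makes (iii) go through for block subspaces.
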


The above should be compared to the main theorem from \cite{AM1}, where a space with very similar properties is presented. The key difference between the aforementioned case and the present one is in property (v), namely in \cite{AM1} it is only proved for compositions of three strictly singular operators, and not two. In \cite{AM1} special weighted functionals are used, which impose the necessity to include $\be$-averages in the definition of the norming set. The absence of these two notions in the present construction yields property (v), which is the best possible, as well as simplified proofs, compared to those in \cite{AM1}.

Below we present the main properties of the space $\mathfrak{X}_\mathcal{U}$.
\begin{thmint}\label{b}
If $\T = \mathcal{U}$, then the space $\mathfrak{X}_\mathcal{U}$ satisfies the following properties.
\begin{itemize}

\item[(i)] The space $\mathfrak{X}_\mathcal{U}$ has a bimonotone and shrinking Schauder basis, it is hereditarily indecomposable and contains no reflexive subspace.

\item[(ii)] Every Schauder basic sequence in $\mathfrak{X}_\mathcal{U}$ admits either $\ell_1$, either, $c_0$ or the summing basis of $c_0$ as a spreading model and every infinite dimensional subspace of $\mathfrak{X}_\mathcal{U}$ admits all three of these types of spreading models.

\item[(iii)] For every block subspace $X$ of $\mathfrak{X}_\mathcal{U}$ and every bounded linear operator $T:X\rightarrow X$, there is $\la\in\R$ so that $T - \la I$ is weakly compact and hence strictly singular.

\item[(iv)] For every infinite dimensional subspace $X$ of $\mathfrak{X}_\mathcal{U}$ the ideal of the strictly singular operators $\mathcal{S}(X)$ is non separable.

\item[(v)] For every subspace $X$ of $\mathfrak{X}_\mathcal{U}$ and every strictly singular operators $S$, $T$ on $X$, the composition $TS$ is compact.

\item[(vi)] For every block subspace $X$ of $\mathfrak{X}_\mathcal{U}$, every non-scalar bounded linear operator $T:X\rightarrow X$ admits a non-trivial closed hyperinvariant subspace.

\end{itemize}
\end{thmint}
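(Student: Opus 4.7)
My plan is to establish Theorem B by analysing the norming set $W_\mathcal{U}$ and the tree-branch structure it encodes. For (i), bimonotonicity is immediate since $W_\mathcal{U}$ is closed under interval restrictions. The basis is shrinking because every normalized block sequence $(y_n)$ satisfies, after a suitable subsequence extraction, that the action of any fixed $f \in W_\mathcal{U}$ eventually decays: this reduces, via a tree-induction on the depth of the $\al_c$-averages generating $f$, to the elementary fact that successive block vectors eventually evade the finite support of $f$. The space contains no reflexive subspace because the infinite-branch statement paraphrased in the introduction gives, inside every block subspace $X$, a sequence $(x_k) \subset X$ with $\|x_k\| > 1/2$ and $\|\sum_{k=1}^d x_k\| < 27$ for all $d$, so the basis of $X$ fails to be boundedly complete, and combined with shrinking this forbids reflexivity. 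The hereditary indecomposability is proved by the standard dependent-sequence/exact-pair argument adapted to $W_\mathcal{U}$: given block subspaces $Y,Z$ one assembles vectors $y \in Y$, $z \in Z$ whose difference is annihilated by the natural norming functional while $\|y + z\|$ remains of order one, yielding arbitrarily small ratios $\|y - z\|/\|y + z\|$.

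For (ii), I would analyse a normalized block sequence by how $W_\mathcal{U}$-functionals act on it. If some weighted functional of large weight acts uniformly on the sequence, one extracts an $\ell_1$ spreading model. If the sequence is a rapidly increasing sequence whose associated norming data trace a well-founded subtree of $\mathcal{U}$, the $(1/2^n, \Sn)$ operations on it are neutralized by the $\al_c$-averaging constraints, producing a $c_0$ spreading model. If instead the sequence aligns with an infinite branch of $\mathcal{U}$, the branch estimate $\|\sum_{k=1}^d y_k\| < 27$ persists for all $d$, while a functional reading the branch separates partial sums from zero, which is precisely the summing basis of $c_0$ as a spreading model. Each of these extractions can be carried out inside any infinite-dimensional subspace, so all three types appear in every subspace.

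Parts (iii)--(vi) are interlocking consequences of the spreading-model trichotomy combined with HI. For (iii), if $T: X \to X$ is bounded on a block subspace, HI forces a scalar $\la \in \R$ for which $T - \la I$ strongly compresses a block sequence; that compression, together with the impossibility of $\ell_1$ appearing in its range, yields weak compactness, hence strict singularity. For (iv), to each infinite branch of $\mathcal{U}$ one associates a distinct strictly singular operator, and uncountably many incompatible branches produce the non-separability. The technical core is (v): strictly singular operators on $\mathfrak{X}_\mathcal{U}$ send normalized block sequences to sequences of $c_0$- or summing-$c_0$-type, and the absence of $\be$-averages and special weighted functionals (noted after Theorem A) lets two such compressions force compactness, improving the three-fold composition result of \cite{AM1}. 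Finally, (vi) follows by Lomonosov's theorem applied to $T = \la I + S$ with $S$ strictly singular and $S^2$ compact. The main obstacle I anticipate is (v) in the non-reflexive regime: the two-fold compactness must withstand the new summing-$c_0$ behaviour produced by infinite branches of $\mathcal{U}$, which requires the finite-dimensional reduction underpinning the proof to interact cleanly with both well-founded and non-well-founded subtrees simultaneously.
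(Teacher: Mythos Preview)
Your plan has several genuine gaps where the proposed mechanism either does not work or is the reverse of what actually happens.

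\textbf{HI argument, part (i).} You write that the dependent-sequence construction yields $y,z$ whose \emph{difference} is annihilated by the norming functional while $\|y+z\|$ stays of order one. In the paper the roles are reversed: the dependent sequence $\{(f_k,x_k)\}_{k=1}^m$ alternates between the two subspaces, and the main estimate (Proposition~\ref{dependent sum bounded}) bounds the \emph{sum} $\|\sum_k x_k\|\leqslant 53$. The large quantity is the alternating sum $\sum_k(-1)^kx_k$, and it is large precisely because the $\mathcal{CO}$-averages carry alternating signs and therefore \emph{norm} it, not annihilate it; one builds $f=(1/2)\sum_q\al_q\in W_\mathcal{U}$ with $f(x-y)\geqslant(4/9)n$. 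Your description inverts the role of the conditional structure.

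\textbf{Spreading models, part (ii).} Your trichotomy via ``the norming data trace a well-founded subtree'' versus ``align with an infinite branch'' is not the paper's mechanism and it is unclear how one would make it precise. The paper's tool is the $\al$-index (Definition~\ref{al index}): for weakly null sequences, $\al$-index zero forces a $c_0$ spreading model (Proposition~\ref{if al zero}) and $\al$-index positive forces an $\ell_1^n$ spreading model (Proposition~\ref{if al positive}). Non-trivial weakly Cauchy sequences are handled separately (Section~7) by reducing to the partial-sums sequence $y_k=\sum_{i\leqslant k}x^{**}(e_i^*)e_i$, whose difference sequence has $\al$-index zero by Lemma~\ref{on bounded sums index is zero}; this yields the summing basis via Lemma~\ref{if difference c0 then summing}. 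None of this is organised around which subtree a sequence ``traces''.

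\textbf{Shrinking, part (i).} ``Successive block vectors eventually evade the finite support of $f$'' establishes nothing about shrinking; one must control the action of \emph{all} $f\in W_\mathcal{U}$ simultaneously. The paper's argument (Proposition~\ref{shrinking}) goes through the s.c.c.\ estimate of Corollary~\ref{estimate on scc}, which bounds the norm of convex combinations of normalized blocks by $6/2^n+12\e$.

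\textbf{Part (iv).} The non-separability of $\mathcal{S}(X)$ is not obtained by assigning operators to branches of $\mathcal{U}$; the paper invokes the construction of \cite[Proposition~5.23]{AM1}, which builds operators from projections onto carefully chosen vectors and is unrelated to the branch structure.

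Parts (iii), (v), (vi) of your sketch are closer to the paper, though for (iii) note that the paper proves $T=\la I+S$ directly (Lemmas~\ref{weaker version of the classical old argument, just without the convex combinations}--\ref{yeh yeh, i got it}, Theorem~\ref{scalar plus ss}) rather than invoking HI, and then shows separately that strictly singular $\Leftrightarrow$ weakly compact (Theorem~\ref{scalar plus wc}).
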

This is the first known example of a Banach space with no reflexive subspace such that the space generated by every block sequence satisfies the invariant subspace property.

In Theorems \ref{a} and \ref{b} property (vi) can be stated for every subspace $X$ of the corresponding space, such that every $T$ in $\mathcal{L}(X)$ is of the form $\lambda I + S$, with $S$ strictly singular. The present construction can also be carried out over the field of complex numbers. The corresponding complex HI spaces satisfy Theorems \ref{a} and \ref{b}, in particular property (vi) holds for every closed subspace (\cite[Theorem 18]{GM}).

\section{The norming set of the space $\X$}\label{section norming}
This section is devoted to the norming set $\WT$ of the space. We begin with a brief presentation and discussion concerning the main ingredients involved in the definition of $\WT$. As we have mentioned in the introduction we will consider subtrees of the universal tree $\mathcal{U}$. Each such tree $\T$ is downwards closed and for every node which is non-maximal in $\T$, all of its immediate successors in $\mathcal{U}$ are also included in $\T$. For our needs the tree is either well founded, containing at least all elements $\{(f_k,x_k)\}_{k=1}^d$ of $\mathcal{U}$ such that $(f_k)_{k=1}^d$ is $\mathcal{S}_2$-admissible, or otherwise $\T = \mathcal{U}$.

The second ingredient are the \ac-averages which are inductively defined and are described as follows.

To each weight $n$ we assign a unique weight $\phi(n)$ that appears in the tree $\T$. Two different weights $n$ and $m$ are comparable, if there exist $\{(f_1,x_1),\ldots,(f_k,x_k)\}$ in $\mathcal{T}$ and $1\leqslant i<j\leqslant k$ such $\phi(n) = w(f_i)$ and $\phi(m) = w(f_j)$. Otherwise $n$, $m$ are incomparable.

We consider the following four types of averages. The first one are averages of the basis  $(e_i^*)_i$, called basic averages.

The second one are \ic-averages, i.e. $\al$-averages of the form $(1/n)\sum_{i=1}^ng_i$ with $\{w(g_i)\}_{i=1}^n$ pairwise incomparable.

The third one are \ir-averages, i.e. $\al$-averages of the form $(1/n)\sum_{i=1}^ng_i$ such that there exist $\{(f_1,x_1),\ldots,(f_m,x_m)\}$ in $\mathcal{T}$ and $1\leqslant k_1 <\cdots<k_n\leqslant m$ with $w(f_{k_i}) = \phi(w(g_i))$ and $|g_i(x_{k_i})| > 10$.

The last type are the conditional averages, called \co-averages. Those are $\al$-averages of the form $(1/n)(g_1 - g_2 + g_3 - g_4 + \cdots + (-1)^{n+1}g_n)$ such that there exist $\{(f_1,x_1),\ldots,(f_m,x_m)\}$ in $\mathcal{T}$ and $1\leqslant k_1 <\cdots<k_n\leqslant n$ with $w(f_{k_i}) = \phi(w(g_i))$ and $|g_i(x_{k_i}) - g_j(x_{k_j})|< 1/2^i$ for $1\leqslant i < j\leqslant n$.

The third and fourth types of averages explain why we consider in the universal tree $\mathcal{U}$ families of pairs $\{(f_k,x_k)\}_{k=1}^d$, instead of $(f_k)_{k=1}^d$ which is the approach used in the classical norming sets. We note that the basic averages permit to begin the construction of weighted functionals in the norming set $\WT$. The \co-averages are responsible for the whole conditional structure in the space $\X$. The remaining two types of averages are necessary to exclude the presence of $c_0$ in the space.

\subsection{The Schreier families} The Schreier families is an increasing sequence of families of finite subsets of the natural numbers, which first appeared in \cite{AA}, and is inductively defined in the
following manner. Set
\begin{equation*}
\mathcal{S}_0 = \big\{\{n\}: n\inn\big\}\;\text{and}\;\mathcal{S}_1 = \{F\subset\mathbb{N}: \#F\leqslant\min F\}.
\end{equation*}
Suppose that $\Sn$ has been defined and set
\begin{equation*}
\begin{split}
\mathcal{S}_{n+1} = \left\{\vphantom{\cup_{j = 1}^k}\right.&F\subset\mathbb{N}:\; F = \cup_{j = 1}^k F_j, \;\text{where}\; F_1 <\cdots< F_k\in\Sn\\
&\left.\vphantom{\cup_{j = 1}^k}\text{and}\; k\leqslant\min F_1\right\}.
\end{split}
\end{equation*}

For each $n$, $\Sn$ is a regular family. This means that it is hereditary, i.e. if $F\in\Sn$ and $G\subset F$ then $G\in\Sn$, it is spreading, i.e. if $F = \{i_1<\cdots<i_d\} \in\Sn$ and $G = \{j_1 < \cdots < j_d\}$ with $i_p \leqslant j_p$ for $p=1,\ldots,d$, then $G\in\Sn$ and finally it is compact, if seen as a subset of $\{0,1\}^\N$.

If for $n,m\inn$ we set
\begin{equation*}
\begin{split}
\Sn*\mathcal{S}_m = \left\{\vphantom{\cup_{j = 1}^k}\right.&F\subset\mathbb{N}: F = \cup_{j = 1}^k F_j,\; \mbox{where}\; F_1 <\cdots< F_k\in\mathcal{S}_m\\
&\left.\vphantom{\cup_{j = 1}^k}\mbox{and}\; \{\min F_j: j=1,\ldots,k\}\in\Sn\right\},
\end{split}
\end{equation*}
then it is well known \cite{AD2} and follows easily by induction
that $\Sn*\mathcal{S}_m = \mathcal{S}_{n+m}$.

\subsection{The unconditional frame}
The norming set of the space $\X$ is a subset of $W_{(1/2^n,\Sn,\al)_n}$, a version of the norming set of Tsirelson space, defined with saturation under constraints.

We denote by $c_{00}(\N)$ the space of all real valued sequences $(c_i)_i$ with finitely many non-zero terms. We denote by $(e_i)_i$ the unit vector basis of $c_{00}(\N)$, while in some cases we shall denote it as $(e_i^*)_i$. For $x = (c_i)_i\in c_{00}(\N)$, the support of $x$ is the set $\supp x = \{i\inn:\;c_i\neq 0\}$ and the range of $x$, denoted by $\ran x$, is the smallest interval of $\N$ containing $\supp x$. We say that the vectors $x_1,\ldots,x_k$ in $c_{00}(\N)$ are successive if $\max\supp x_i < \min\supp x_{i+1}$ for $i=1,\ldots,k-1$. In this case we write $x_1<\cdots<x_k$. A sequence of successive  vectors in $c_{00}(\N)$ is called a block sequence.

\begin{ntt}
We remind some notation and terminology which is used constantly throughout this paper.
\begin{itemize}
\item[(i)] A sequence of vectors $x_1 <\cdots<x_k$ in $c_{00}(\N)$ is said to be $\Sn$-admissible, for given $n\inn$, if $\{\min\supp x_i: i=1,\ldots,k\}\in\Sn$.

\item[(ii)] Let $G\subset c_{00}(\N)$. A vector $\al\in c_{00}(\N)$ is called an $\al$-average of $G$ of size $s(\al) = n$, if there exist $f_1<\cdots<f_d\in G$, where $d\leqslant n$, such that
$$\al = \frac{1}{n}(f_1+\cdots+f_d).$$

\item[(iii)] A sequence of successive $\al$-averages of $G$ $(\al_q)_q$ is called very fast growing if $s(\al_q)>2^{\max\supp \al_{q-1}}$ for $q>1$.
\end{itemize}
\end{ntt}

\begin{dfn}
We define $\Wa = W_{(1/2^n,\Sn,\al)_n}$ to be the smallest subset of $c_{00}(\N)$ satisfying the following properties:
\begin{itemize}

\item[(i)] for every $i\inn$, $e_i^*\in \Wa$ and the set $\Wa$ is symmetric,

\item[(ii)] the set $\Wa$ contains all $\al$-averages of $\Wa$,

\item[(iii)] for every $n\inn$ and every very fast growing and $\Sn$-admissible sequence of $\al$-averages of $\Wa$ $(\al_q)_{q=1}^d$, the vector $f = (1/2^n)\sum_{q=1}^d\al_q$ is also in $\Wa$.

\end{itemize}
\end{dfn}

We note that, as it is usually the case in this type of constructions, the size of an average and the weight of a weighted functional may not be uniquely defined. However, this does not cause any problems.

\begin{rmk}\label{basics of unconditional constraints}
The set $\Wa$ satisfies the properties mentioned below. Note that properties (i), (ii) and (iii) follow readily from property (iv).
\begin{itemize}

\item[(i)] Every $f\in\Wa$ is either of the form $f = \pm e_i^*$, either an $\al$-average of $\Wa$ or $f = (1/2^n)\sum_{q=1}^d\al_q$, where $(\al_q)_{q=1}^d$ is a very fast growing and $\Sn$-admissible sequence of $\al$-averages of $\Wa$. In the last case we shall say that $f$ is a weighted functional of $\Wa$ of weight $w(f) = n$.

\item[(ii)] For every $f\in\Wa$ and subset of the natural numbers $E$, the functional $Ef$, i.e. the restriction of $f$ onto $E$, is also in $\Wa$.

\item[(iii)] The coefficients of every $f\in\Wa$ are rational numbers. In particular, $\Wa$ is a countable set.

\item[(iv)] The set $\Wa$ can be constructed recursively to be the union of an increasing sequence of sets $(W_m^\al)_{m=0}^\infty$, where $W_0^\al = \{\pm e_i^*:\;i\inn\}$ and if $W_m^\al$ has been defined, then $W_{m+1}^1$ is the set of all $\al$-averages of $W_m^\al$, $W_{m+1}^2$ is the set of all weighted functionals constructed on very fast growing sequences of elements of $W_{m+1}^1$ and $W_{m+1}^\al = W_m^\al\cup W_{m+1}^1\cup W_{m+1}^2$.

\end{itemize}
\end{rmk}

\subsection{The universal tree $\mathcal{U}$}
We denote by $\mathcal{Q}$ the set of all finite sequences $\{(f_1,x_1),\ldots,(f_k,x_k)\}$ satisfying the following:
\begin{itemize}

\item[(i)] the $f_1,\ldots,f_k$ are successive non-zero weighted functionals of $\Wa$ and

\item[(ii)] the $x_1,\ldots,x_k$ are successive non-zero vectors in $c_{00}(\N,\Q)$ (i.e. they are vectors in $c_{00}(\N)$ with rational coefficients).

\end{itemize}
Note that $\mathcal{Q}$ is a subset of $\cup_n (\Wa\times c_{00}(\N,\Q))^n$ and hence countable.

Choose an infinite subset $L' = \{\ell_k:\;k\inn\}$ of $\N$ satisfying:
\begin{itemize}

\item[(i)]  $\min L' \geqslant 8$ and

\item[(ii)] for every $k\inn$, $\ell_{k+1} > 2^{2\ell_k}$.

\end{itemize}
Define a partition of $L'$ into two infinite subsets $L_0$ and $L_1'$ and choose a one-to-one function $\sigma:\mathcal{Q}\rightarrow L_1'$, called the coding function, so that for every $\{(f_1,x_1),\ldots,(f_k,x_k)\}\in\mathcal{Q}$,
\begin{equation}\label{coding}
\sigma\left(\{(f_1,x_1),\ldots,(f_k,x_k)\}\right) > \|f_k\|_\infty^{-1}\max\supp x_k.
\end{equation}
A finite sequence $\{(f_k,x_k)\}_{k=1}^d\in\mathcal{Q}$ is called a special sequence if:
\begin{itemize}

\item[(i)] $w(f_1)\in L_0$ and

\item[(ii)] if $d\geqslant 2$ then $w(f_k) = \sigma\left(\{(f_1,x_1),\ldots,(f_{k-1},x_{k-1})\}\right)$ for $k=2,\ldots,d$.

\end{itemize}

\begin{rmk}\label{special increasing weights}
Note that if $\{(f_k,x_k)\}_{k=1}^d$ is a special sequence, then \eqref{coding} and (ii) imply that $w(f_1)<\cdots<w(f_d)$.
\end{rmk}

Note that if $\{(f_k,x_k)\}_{k=1}^d$ is a special sequence and $1\leqslant p\leqslant d$, then $\{(f_k,x_k)\}_{k=1}^p$ is a special sequence as well, hence if we define $\mathcal{U}$ to be the set of all special sequences, then $\mathcal{U}$ is a tree endowed with the natural ordering ``$\sqsubseteq$'' of initial segments. Note that the tree $\mathcal{U}$ is ill founded, more precisely every maximal chain of $\mathcal{U}$ is infinite. We shall call the tree $\mathcal{U}$, the universal tree associated with the coding function $\sigma$.

\subsection{Subtrees of $\mathcal{U}$}\label{subtrees} We fix a subtree $\T$ of $\mathcal{U}$ which satisfies the following properties:
\begin{itemize}

\item[(i)] for every $\{(f_k,x_k)\}_{k=1}^d$ in $\T$ and $1\leqslant p\leqslant d$  $\{(f_k,x_k)\}_{k=1}^p$ is also in $\T$, i.e. $\T$ is a downwards closed subtree of $\mathcal{U}$,

\item[(ii)] if $\{(f_k,x_k)\}_{k=1}^d$ is a non-maximal node in $\T$, then for every element $(f_{d+1},x_{d+1})$ so that $\{(f_k,x_k)\}_{k=1}^{d+1}$ is in $\mathcal{U}$, $\{(f_k,x_k)\}_{k=1}^{d+1}$ is also in $\T$ and

\item[(iii)] for every $\{(f_k,x_k)\}_{k=1}^d$ in $\mathcal{U}$ with $(f_k)_{k=1}^d$ being $\mathcal{S}_2$-admissible, we have that $\{(f_k,x_k)\}_{k=1}^d$ is in $\T$.

\end{itemize}

\begin{dfn}\label{function phi}
We define $L_1 = \sigma(\T)$, which is a subset of $L_1'$, and $L = L_0\cup L_1$. Define $\phi: \{i\inn:\;i\geqslant\min L\}\rightarrow L$ with $\phi(i) = \max\{\ell\in L:\;\ell\leqslant i\}$.
\end{dfn}

Observe that the function $\phi$ is non-decreasing, $\phi(i)\leqslant i$ for all $i\inn$ and $\lim_i\phi(i) = \infty$.

\begin{dfn}\label{incomparable naturals}
Two natural numbers $i$ and $j$, both greater than or equal to $\min L$, are called incomparable if one of the following holds:
\begin{itemize}

\item[(i)] $\phi(i)$ and $\phi(j)$ are both in $L_0$ and $\phi(i)\neq\phi(j)$ or

\item[(ii)] $\phi(i)$ and $\phi(j)$ are both in $L_1$ and $\sigma^{-1}(\phi(i))$, $\sigma^{-1}(\phi(j))$ are incomparable, in the ordering of $\T$.

\end{itemize}
If $i$, $j$ are not incomparable they will be called comparable.
\end{dfn}

\subsection{\ac-averages}
We shall define very specific types of averages, based on the tree $\T$ and the notion of comparability of natural numbers from Definition \ref{incomparable naturals}. Alongside averages of elements of the basis $(e_i^*)_i$, in the definition of the norming set $\WT$ we shall only consider these types of averages.

\begin{dfn}\label{def averages}
Let $g_1<\cdots<g_d$ be weighted functionals in a subset $G$ of $\Wa$, all of which have weight greater than or equal to $\min L$, satisfying $\phi(w(g_1))<\cdots<\phi(w(g_d))$.

\begin{itemize}

\item[(i)] The sequence $(g_i)_{i=1}^d$ is called incomparable, if the natural numbers $w(g_i)$, $i=1,\ldots,d$ are pairwise incomparable, in the sense of Definition \ref{incomparable naturals}. In this case, if $n\inn$ with $d\leqslant n$ we call the average
$$ \al = \frac{1}{n}\sum_{i=1}^dg_i$$
an \ic-average of $G$.

\item[(ii)] The sequence $(g_i)_{i=1}^d$ is called comparable, if there exist $m\inn$ with $d\leqslant m$, $\{(f_1,x_1),\ldots,(f_m,x_m)\}\in\T$ and $1\leqslant k_1<\cdots<k_d\leqslant m$ so that the following are satisfied:
    \begin{itemize}

    \item[(a)] $w(f_{k_i}) = \phi(w(g_i))$,

    \item[(b)] if $d\geqslant 3$ then $|g_i(x_{k_i})| \leqslant 10$ for $i=2,\ldots,d-1$ and

    \item[(c)] if $d\geqslant 4$ then $|g_i(x_{k_i}) - g_j(x_{k_j})| < 1/2^i$ for $2\leqslant i < j\leqslant d-1$.

    \end{itemize}
    In this case, if $n\inn$ with $d\leqslant n$ and $(\e_i)_{i=1}^d$ is a sequence of alternating signs in $\{-1,1\}$ we call the average
    $$ \al = \frac{1}{n}\sum_{i=1}^d\e_ig_i$$
    a \co-average of $G$.

\item[(iii)] The sequence $(g_i)_{i=1}^d$ is called irrelevant, if there exist $m\inn$ with $d\leqslant m$, $\{(f_1,x_1),\ldots,(f_m,x_m)\}\in\T$ and $1\leqslant k_1<\cdots<k_d\leqslant m$ so that the following are satisfied:
    \begin{itemize}

    \item[(a)] $w(f_{k_i}) = \phi(w(g_i))$ and

    \item[(b)] if $d\geqslant 3$ then $|g_i(x_{k_i})| > 10$ for $i=2,\ldots,d-1$.

    \end{itemize}
    In this case, if $n\inn$ with $d\leqslant n$ we call the average
    $$ \al = \frac{1}{n}\sum_{i=1}^dg_i$$
    an \ir-average of $G$.

%\item[(iv)] The sequence $(g_i)_{i=1}^d$ is called pseudo-comparable, if there exist $m\inn$ with $d\leqslant m$, $\{(f_1,x_1),\ldots,(f_m,x_m)\}\in\T$ and $1\leqslant k_1<\cdots<k_d\leqslant m$ so that the following are satisfied:
 %   \begin{itemize}

%    \item[(a)] $w(f_{k_i}) = \phi(w(g_i))$ and

%    \item[(b)] if $d\geqslant 3$ then $|g_i(x_{k_i})| = 0$ for $i=2,\ldots,d-1$.

%    \end{itemize}
%    In this case, if $n\inn$ with $d\leqslant n$ we call the average
%    $$ \al = \frac{1}{n}\sum_{i=1}^dg_i$$
%    a \pc-average of $G$.

\end{itemize}
Any average which is of one of the forms defined above, shall be called an \ac-average of $G$. Basic averages will be referred to as \ac-averages as well, where a basic average is a functional of the form $\al = (1/n)\sum_{i=1}^d\e_ie_{j_i}^*$ where $d$, $n$, $j_1<\cdots<j_d\inn$ with $d\leqslant n$ and $(\e_i)_{i=1}^d$ are any signs in $\{-1,1\}$.
\end{dfn}

\begin{rmk}\label{remark on more restrictive}
The class of \ac-averages, is a much more restricted version of the one of $\al$-averages and, with the exception of basic averages, \ac-averages are determined using the coding function $\sigma$, more precisely the tree $\T$.
\end{rmk}

\begin{rmk}\label{restrictions are in}
If $(g_i)_{i=1}^d$ is a sequence in $\Wa$ which is of one of the three types described in Definition \ref{def averages}, then any subsequence of it is of the same type. Moreover, if $E$ is an interval of $\N$ and $i_1 = \min\{i: E\cap\ran g_i\neq\varnothing\}$ and $i_2 = \max\{i: E\cap\ran g_i\neq\varnothing\}$, then the sequence $Eg_{i_1}, Eg_{i_i + 1},\ldots, Eg_{i_2}$ is of the same type as $(g_i)_{i=1}^d$. This last part in particular implies that whenever $\al$ is an average which is of one of the three types described in Definition \ref{def averages} and $E$ is an interval of $\N$, then $E\al$ is an average of the same type.
\end{rmk}

\subsection{The norming set $\WT$ of the space $\X$}

\begin{dfn}\label{norming set}
We define $\WT$ to be the smallest subset of $\Wa$ which satisfies the following properties.
\begin{itemize}

\item[(i)] For every $i\inn$, $e_i^*\in \WT$ and the set $\WT$ is symmetric.

\item[(ii)] The set $\WT$ contains all \ac-averages of $\WT$, i.e. it contains all basic averages and all \ic, \co\; and \ir-averages of $\WT$.

\item[(iii)] For every $n\inn$ and every $\Sn$-admissible and very fast growing sequence of \ac-averages $(\al_q)_{q=1}^d$ of $\WT$, $f = (1/2^n)\sum_{q=1}^d\al_q$ is also in $\WT$.

\end{itemize}
\end{dfn}

\begin{rmk}\label{basics of norming}
The set $\WT$ satisfies the properties mentioned below. Note that property (ii) follows from an inductive argument using Remark \ref{restrictions are in} and property (iii).
\begin{itemize}

\item[(i)] Every $f\in \WT$ is either of the form $f = \pm e_i^*$, either an \ac-average of $\WT$ or a weighted functional $f = (1/2^n)\sum_{q=1}^d\al_q$, where $(\al_q)_{q=1}^d$ is a very fast growing and $\Sn$-admissible sequence of \ac-averages of $\WT$.

\item[(ii)] For every $f\in \WT$ and interval of the natural numbers $E$, the functional $Ef$, i.e. the restriction of $f$ onto $E$, is also in $\WT$.

\item[(iii)] The set $\WT$ can be recursively constructed to be the union of an increasing sequence of sets $(W_m)_{m=0}^\infty$, where $W_0 = \{\pm e_i^*:\;i\inn\}$ and if $W_m$ has been defined, then $W_{m+1}^{\al_c}$ is the set of all \ac-averages of $W_m$, $W_{m+1}^w$ is the set of all weighted functionals constructed on very fast growing sequences of elements of $W_{m+1}^{\al_c}$, and $W_{m+1} = W_m\cup W_{m+1}^{\al_c}\cup W_{m+1}^w$.

\end{itemize}
\end{rmk}

The norm of the space $\X$ is the one induced by the set $\WT$, i.e. for every $x\in c_{00}(\N)$ we set $\|x\| = \sup\{f(x):\;f\in \WT\}$ and we define $\X$ to be the completion of $c_{00}(\N)$ with respect to this norm. By Remark \ref{basics of norming} the unit vector basis of $c_{00}(\N)$ forms a bimonotone Schauder basis for $\X$.

\begin{rmk}
The conditional structure of the space $\X$ is only imposed by the \co-averages in the norming set $\WT$, which are merely averages. In this sense, the conditionality appearing in the space $\X$ is not as strict as in other HI constructions.
\end{rmk}

\section{Special convex combinations and evaluation of their norm}\label{section general estimates}
We first remind the notion of the $(n,\e)$ special convex combinations, (see \cite{AD2},\cite{AGR},\cite{AT}) which is one of the main tools used in the sequel. We then include, without proof, some estimates from \cite{AM1}, which also apply to the present case.

\begin{dfn}\label{def of basic scc}
Let $x = \sum_{k\in F}c_ke_k$ be a vector in $c_{00}(\N)$ and $n\inn$, $\e>0$. Then $x$ is called a $(n,\e)$-basic special convex combination (or a $(n,\e)$-basic s.c.c.) if the following are satisfied:

\begin{enumerate}

\item[(i)] $F\in\Sn$, $c_k\geqslant 0$ for $k\in F$ and $\sum_{k\in F}c_k = 1$,

\item[(ii)] for any $G\subset F$, with $G\in\mathcal{S}_{n-1}$, we have that $\sum_{k\in G}c_k < \e$.

\end{enumerate}

\end{dfn}

\begin{rmk}\label{is still scc}
We note for later use the following easy fact. If $x = \sum_{i\in F} c_i e_i$ is a $(n,\e)$-basic s.c.c. with $0<\e<1/2$ and for $i\in F\setminus\{\min F\}$ we set $c_i' = c_i/(\sum_{j\in F\setminus\{\min F\}}c_j)$  then $y = \sum_{i\in F\setminus\{\min F\}} c_i'e_i$ is a $(n,2\e)$-basic s.c.c.
\end{rmk}

The next result is from \cite{AMT}. For a proof see \cite[Chapter 2, Proposition 2.3]{AT}.

\begin{prp}\label{basic scc exist in abundance}
For every infinite subset of the natural numbers $M$, any $n\inn$ and $\e>0$, there exist $F\subset M$ and non-negative real numbers $(c_k)_{k\in F}$, such that the vector $x = \sum_{k\in F}c_ke_k$ is a $(n,\e)$-basic s.c.c.
\end{prp}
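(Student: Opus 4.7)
The proof proceeds by induction on $n$, following the standard pattern for the construction of basic special convex combinations.

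For the base case $n=1$, given $M$ and $\e$, pick $k_0 \in M$ with $k_0 > 1/\e$ and take $F$ to be any $k_0$-element subset of $M \cap [k_0,\infty)$ with $\min F = k_0$; set $c_k = 1/k_0$. Then $|F| = k_0 \leq \min F$ places $F$ in $\mathcal{S}_1$, the coefficients sum to $1$, and every $\mathcal{S}_0$-subset of $F$ (i.e.\ singleton) carries weight $1/k_0 < \e$. Both clauses of Definition \ref{def of basic scc} hold.

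For the inductive step $n \Rightarrow n+1$, fix $M$ and $\e$ and choose $k_0 \in M$ large and $\delta > 0$ small, to be tuned. Applying the inductive hypothesis successively on cofinite tails of $M \cap [k_0,\infty)$ produces a finite block sequence $x_1 < x_2 < \cdots < x_{k_0}$ in $M$, each $x_j = \sum_{k \in F_j} c_{k,j} e_k$ being an $(n,\delta)$-basic s.c.c. Set $x = (1/k_0)\sum_{j=1}^{k_0} x_j$, so $F = \bigcup_j F_j$ and $c_k = c_{k,j}/k_0$ for $k \in F_j$. Clause (i) is immediate from the convolution identity $\mathcal{S}_{n+1} = \mathcal{S}_1 \ast \mathcal{S}_n$ recalled in Section \ref{section norming}: the $F_j \in \mathcal{S}_n$ are successive with $k_0 \leq \min F_1$, hence $F \in \mathcal{S}_{n+1}$, and the coefficients sum to $1$.

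The crux is clause (ii): for every $G \in \mathcal{S}_n$ with $G \subset F$, one must produce the bound $\sum_{k \in G} c_k < \e$. Write $G = \bigcup_j G_j$ with $G_j = G \cap F_j$. Whenever $G_j \in \mathcal{S}_{n-1}$, the inductive hypothesis gives $\sum_{k \in G_j} c_{k,j} < \delta$; on the remaining indices $j$, only the trivial bound $\sum_{k \in G_j} c_{k,j} \leq 1$ is available. The essential combinatorial input is that, exploiting hereditariness and spreading of the Schreier families together with $k_0 \leq \min F_1$ (and again the decomposition $\mathcal{S}_n = \mathcal{S}_{n-1} \ast \mathcal{S}_1$), the set of indices $j$ for which $G_j \notin \mathcal{S}_{n-1}$ is controlled independently of $G$. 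Combining the two estimates and tuning $\delta$ small and $k_0$ large produces the desired inequality.

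The main obstacle is precisely this combinatorial lemma controlling how an $\mathcal{S}_n$-set can split across successive $\mathcal{S}_n$-blocks; it is the classical ingredient underlying the existence of special convex combinations, going back to \cite{AMT} and developed in detail in \cite[Chapter~2, Proposition~2.3]{AT}. Once the lemma is granted, the construction above and the verification of (i) and (ii) are routine bookkeeping, which is why the paper simply invokes the established reference.
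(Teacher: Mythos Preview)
The paper does not give a proof of this proposition; it simply attributes the result to \cite{AMT} and refers the reader to \cite[Chapter~2, Proposition~2.3]{AT}. Your inductive framework is the standard one carried out in those references, and you correctly isolate clause~(ii) in the inductive step as the only non-trivial point and defer it to the same source. In that sense the proposal matches the paper's own treatment.

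One genuine inaccuracy, however, is in how you describe the combinatorial input. The assertion that ``the set of indices $j$ for which $G_j \notin \mathcal{S}_{n-1}$ is controlled independently of $G$'' is not what makes the argument work, and taken literally it is false. Already at the step $n=1 \Rightarrow n=2$: with $k_0$ blocks $F_1<\cdots<F_{k_0}$ and $\min F_1 \geqslant k_0$, an $\mathcal{S}_1$-set $G$ can be chosen to meet each of $k_0/2$ blocks in exactly two points (take two points from each of $F_1,\ldots,F_{k_0/2}$; then $|G|=k_0\leqslant\min F_1\leqslant\min G$). Thus $|J_2|=k_0/2$, and your estimate $\sum_{k\in G}c_k\leqslant \delta + |J_2|/k_0$ is stuck at $1/2$ no matter how $\delta$ and $k_0$ are tuned.

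The proof in \cite{AT} (the repeated-averages hierarchy) does not count bad blocks. It fixes the number of blocks to equal $m=\min F_1$, takes each $x_j$ to be a repeated average itself, and bounds $\sum_{k\in G}c_k$ by decomposing $G\in\mathcal{S}_n$ into its $\mathcal{S}_{n-1}$-pieces and tracking a weighted sum of the form $\sum_j (\text{number of pieces meeting }F_j)/\min F_j$. The point is that all but one piece meeting a given block has its minimum inside that block, and this together with $m_j\geqslant m$ for all $j$ gives the bound. Since you ultimately invoke \cite{AT} for the lemma anyway, your plan is sound; but the heuristic you record for the key step is misleading and would not close the induction if taken at face value.
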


\begin{dfn}\label{def scc}
Let $x_1 <\cdots<x_m$ be vectors in $c_{00}(\N)$ and $\psi(k) = \min\supp x_k$, for $k=1,\ldots,m$. If the vector $\sum_{k=1}^mc_ke_{\psi(k)}$ is a $(n,\e)$-basic s.c.c., for some $n\inn$ and $\e>0$, then the vector $x = \sum_{k=1}^mc_kx_k$ is called a $(n,\e)$-special convex combination (or $(n,\e)$-s.c.c.).
\end{dfn}

By $T$ we denote Tsirelson space and by $\|\cdot\|_T$ its norm, as they were defined in \cite{FJ}. This space is actually the dual of Tsirelson's original Banach space defined in \cite{T}. The proof of the following result can be found in \cite[Proposition 2.5]{AM1}.

\begin{prp}\label{scc in tsirelson}
Let $n\inn$, $\e>0$, $x = \sum_{k\in F}c_ke_k$ be a $(n,\e)$-basic s.c.c. and $G\subset F$. Then
\begin{equation*}
\left\|\sum_{k\in G}c_ke_k\right\|_T \leqslant\frac{1}{2^n}\sum_{k\in G}c_k + \e.
\end{equation*}
\end{prp}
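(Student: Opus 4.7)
The plan is to exploit the usual tree-analysis of Tsirelson's norming set $W_T$, which is the smallest subset of $c_{00}(\N)$ containing $\{\pm e_i^*:i\inn\}$ and closed under the $(1/2,\mathcal{S}_1)$-operation. Fix $f\in W_T$; by the minimality of $W_T$, $f$ either is some $\pm e_i^*$ or admits a decomposition $f=(1/2)\sum_{j=1}^d g_j$ with $g_1<\cdots<g_d$ an $\mathcal{S}_1$-admissible sequence in $W_T$. Iterating this decomposition gives a finite tree whose leaves are $\pm$ basis vectors, and for each $k\in\supp f$ we let $d_k(f)$ be the depth of the leaf at $k$. Along any branch the product of the factors $1/2$ yields $|f(e_k)|\leqslant 1/2^{d_k(f)}$.

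The main combinatorial step is the following claim: for every $m\geqslant 0$ and every $f\in W_T$, the set
$$A_m(f):=\{k\in\supp f:\; d_k(f)\leqslant m\}$$
belongs to $\mathcal{S}_m$. I would prove this by simultaneous induction on $m$ and on the tree-complexity of $f$. The case $f=\pm e_i^*$ (in which case $d_i(f)=0$ and $A_0(f)=\{i\}\in\mathcal{S}_0$) is trivial. In the inductive step with $f=(1/2)\sum_{j=1}^d g_j$ and $m\geqslant 1$, one has $d_k(f)=1+d_k(g_{j(k)})$, so
$$A_m(f)=\bigcup_{j=1}^d A_{m-1}(g_j),$$
the $A_{m-1}(g_j)$ are successive, lie in $\mathcal{S}_{m-1}$ by the inductive hypothesis, and $d\leqslant\min\supp g_1\leqslant \min A_{m-1}(g_1)$ by $\mathcal{S}_1$-admissibility, which gives $A_m(f)\in\mathcal{S}_m$ from the inductive definition of $\Sn$.

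Granted the claim, the proof concludes as follows. Write $y=\sum_{k\in G}c_ke_k$ and fix any $f\in W_T$. Apply the claim with $m=n-1$ to obtain $A:=A_{n-1}(f)\in\mathcal{S}_{n-1}$. Split
$$f(y)=\sum_{k\in G\cap A}c_k f(e_k)+\sum_{k\in G\setminus A}c_k f(e_k).$$
On $G\cap A$ we only use the trivial bound $|f(e_k)|\leqslant 1$, but since $G\cap A\subset A\in\mathcal{S}_{n-1}$ and $\mathcal{S}_{n-1}$ is hereditary, $G\cap A\in\mathcal{S}_{n-1}$, so condition (ii) of Definition \ref{def of basic scc} yields $\sum_{k\in G\cap A}c_k<\e$. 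On $G\setminus A$ we have $d_k(f)\geqslant n$, hence $|f(e_k)|\leqslant 1/2^n$, which gives $\sum_{k\in G\setminus A}c_k|f(e_k)|\leqslant (1/2^n)\sum_{k\in G}c_k$. Adding the two estimates and taking the supremum over $f\in W_T$ yields the desired bound; the borderline case $n=0$ (in which $\mathcal{S}_{-1}$ is not defined) is handled separately by the trivial estimate $\|\cdot\|_T\leqslant\|\cdot\|_1$.

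The only potentially delicate step is the claim $A_m(f)\in\Sm$, and in particular keeping track that the admissibility condition $d\leqslant\min\supp g_1$ survives to give $d\leqslant\min A_{m-1}(g_1)$; this is the point where the $\mathcal{S}_1$-admissibility in the Tsirelson norm is converted, level by level, into the $\Sn$-structure needed to interact with the basic s.c.c.\ hypothesis. Everything else is bookkeeping.
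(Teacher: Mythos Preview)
Your argument is correct and is precisely the standard tree-analysis proof of this estimate; the paper does not give its own proof here but refers to \cite[Proposition 2.5]{AM1}, where the same approach is used. The only cosmetic point is that $A_m(f)$ depends on the chosen tree analysis of $f$ (which is not unique), and in the inductive step some of the sets $A_{m-1}(g_j)$ may be empty---both are harmless and implicitly handled by your argument.
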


The next result can also be found in \cite[Corollary 2.8]{AM1}. A number of steps are required in order to reach this estimate, however the arguments used there also work in the present case unchanged and therefore we omit the proof.

\begin{prp}\label{without this we are doomed}
Let $(x_k)_{k}$ be a block sequence in $\X$ with $\|x_k\|\leqslant 1$ for all $k\inn$, $(c_k)_k$ be a sequence of real numbers and $\phi(k) = \max\supp x_k$ for all $k$. Then
\begin{equation*}
\left\|\sum_kc_kx_k\right\| \leqslant 6\left\|\sum_kc_ke_{\phi(k)}\right\|_T.
\end{equation*}
\end{prp}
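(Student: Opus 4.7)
My plan is to establish the stronger pointwise claim
$$|f(\sum_k c_k x_k)| \leqslant 6 \left\|\sum_k c_k e_{\phi(k)}\right\|_T \quad \text{for every } f \in \WT,$$
from which the proposition follows by taking the supremum over $f \in \WT$. The argument proceeds by induction on the stage $m$ of the recursive construction $\WT = \bigcup_m W_m$ given in Remark \ref{basics of norming}(iii). The base case $m = 0$, $f = \pm e_i^*$, is immediate: for the unique index $k$ with $i \in \ran x_k$ we have $|f(\sum_j c_j x_j)| \leqslant |c_k|$, and the right-hand side dominates $|c_k|$ via 1-unconditionality and $\ell_\infty$-majorization of Tsirelson's norm together with $\|x_k\| \leqslant 1$.

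For the inductive step, assume the claim holds for every $g \in W_m$ and let $f \in W_{m+1}$. If $f$ is an \ac-average of the form $(1/n)\sum_{i=1}^d \e_i g_i$ with $g_i \in W_m$ and $d \leqslant n$, the inductive hypothesis applied termwise yields the claim immediately (with the $d/n \leqslant 1$ factor to spare). The essential case is a weighted functional $f = (1/2^n)\sum_{q=1}^d \al_q$, where $(\al_q)_{q=1}^d$ is an $\Sn$-admissible, very fast growing sequence of \ac-averages from $W_m$. I partition the indices $k$ into \emph{interior} indices, for which $\ran x_k \subseteq \ran \al_q$ for a unique $q$, and \emph{boundary} indices, of which there are at most $2d$ (at most two per $\al_q$, at its endpoints). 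Writing $I_q$ for the interior indices associated with $\al_q$, the inductive hypothesis gives
$$\left|\al_q\left(\sum_{k \in I_q} c_k x_k\right)\right| \leqslant 6 \left\|\sum_{k \in I_q} c_k e_{\phi(k)}\right\|_T.$$
Since the map $q \mapsto \min\{\phi(k) : k \in I_q\}$ preserves $\Sn$-admissibility (inheriting it from $\min \supp \al_q$ via $\phi(i) \leqslant i$ and the spreading property of $\Sn$), the Tsirelson implicit formula then yields $(1/2^n)\sum_q \|\sum_{k \in I_q} c_k e_{\phi(k)}\|_T \leqslant \|\sum_k c_k e_{\phi(k)}\|_T$, handling the interior contribution with the factor 6 intact. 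For the boundary contribution, each boundary $x_k$ is bounded by $\|x_k\| \leqslant 1$ times a small $\ell_\infty$-style factor that is absorbed into $\|\sum c_k e_{\phi(k)}\|_\infty \leqslant \|\sum c_k e_{\phi(k)}\|_T$.

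The main obstacle is the boundary bookkeeping: a naïve count would inflate the constant beyond $6$, so one must exploit the very-fast-growing condition $s(\al_q) > 2^{\max \supp \al_{q-1}}$, which forces any $x_k$ straddling $\al_{q-1}$ and $\al_q$ to have support large enough that $|f(x_k)|$ is already dominated by the Tsirelson norm of $|c_k| e_{\phi(k)}$ alone. This is the chain of estimates culminating in \cite[Corollary 2.8]{AM1}; since every structural ingredient invoked there—the basis/\ac-average/weighted-functional hierarchy, the very-fast-growing condition, and $\Sn$-admissibility—appears verbatim in the present $\WT$, that argument transfers unchanged and delivers the stated constant 6.
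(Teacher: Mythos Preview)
Your proposal is correct and aligns with the paper's own treatment: the paper does not prove this proposition either, but simply asserts that the argument from \cite[Corollary 2.8]{AM1} transfers unchanged to the present setting, which is precisely what you conclude after your sketch. One small remark: in your admissibility check you write ``via $\phi(i)\leqslant i$,'' which conflates the $\phi$ of Definition~\ref{function phi} with the $\phi(k)=\max\supp x_k$ of the proposition; the correct observation is simply that $\min\{\phi(k):k\in I_q\}\geqslant\min\supp\al_q$ (since $\ran x_k\subset\ran\al_q$ for $k\in I_q$), whence spreading gives $\Sn$-admissibility.
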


The next crucial estimate follows from Propositions \ref{scc in tsirelson} and \ref{without this we are doomed}. A proof can be found in \cite[Corollary 2.9]{AM1}.

\begin{cor}\label{estimate on scc}
Let $n\inn$, $\e>0$ and $x = \sum_{k=1}^mc_kx_k$ be a $(n,\e)$-s.c.c. in $\X$, such that $\|x_k\|\leqslant 1$, for $k=1,\ldots,m$. If $F$ is subset of $\{1,\ldots,m\}$ then
\begin{equation*}
\left\|\sum_{k\in F}c_kx_k\right\| \leqslant \frac{6}{2^n}\sum_{k\in F}c_k + 12\e.
\end{equation*}
In particular, $\|x\|\leqslant 6/2^n + 12\e$.
\end{cor}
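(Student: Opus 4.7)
The plan is to combine Propositions \ref{without this we are doomed} and \ref{scc in tsirelson} through a transfer of the basic s.c.c.\ structure from the ``minimum support'' indexing $\psi(k) = \min\supp x_k$ to the ``maximum support'' indexing $\phi(k) = \max\supp x_k$. By Proposition \ref{without this we are doomed} applied to $(x_k)_{k\in F}$, and using $\|x_k\|\leqslant 1$, one has
\begin{equation*}
\left\|\sum_{k\in F}c_kx_k\right\| \leqslant 6\left\|\sum_{k\in F}c_ke_{\phi(k)}\right\|_T.
\end{equation*}
Hence the task reduces to showing that $\sum_{k=1}^{m}c_k e_{\phi(k)}$ is a $(n,2\e)$-basic s.c.c.; Proposition \ref{scc in tsirelson} with $G=F$ will then give $\|\sum_{k\in F}c_k e_{\phi(k)}\|_T \leqslant (1/2^n)\sum_{k\in F}c_k + 2\e$, and multiplying by $6$ yields the corollary.

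The crux is the basic s.c.c.\ transfer. Condition (i) of Definition \ref{def of basic scc} for the set $\{\phi(k):k=1,\ldots,m\}\in\Sn$ is immediate from the spreading property of $\Sn$, since $\{\psi(k)\}\in\Sn$ and $\psi(k)\leqslant\phi(k)$. For condition (ii), let $G\subset\{\phi(k)\}$ with $G\in\mathcal{S}_{n-1}$, written $G=\{\phi(k_1),\ldots,\phi(k_j)\}$ with $k_1<\cdots<k_j$. As the $(x_k)$ are successive, $\phi(k_i)<\psi(k_i+1)\leqslant\psi(k_{i+1})$, so the set $\{\psi(k_2),\ldots,\psi(k_j)\}$ is a strict spread of $\{\phi(k_1),\ldots,\phi(k_{j-1})\}$. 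The latter lies in $\mathcal{S}_{n-1}$ by heredity (as a subset of $G$), and the spreading property of $\mathcal{S}_{n-1}$ then gives $\{\psi(k_2),\ldots,\psi(k_j)\}\in\mathcal{S}_{n-1}$. Applying the basic s.c.c.\ condition on the $\psi$-side yields $\sum_{i=2}^j c_{k_i}<\e$; combined with the singleton bound $c_{k_1}<\e$ (from $\{\psi(k_1)\}\in\mathcal{S}_0\subset\mathcal{S}_{n-1}$), we get $\sum_{i=1}^j c_{k_i}<2\e$, verifying condition (ii) with constant $2\e$.

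Putting these ingredients together yields $\|\sum_{k\in F}c_k x_k\| \leqslant (6/2^n)\sum_{k\in F}c_k + 12\e$, and specializing to $F=\{1,\ldots,m\}$ with $\sum_k c_k=1$ gives the ``in particular'' clause. The main obstacle is the spreading argument in the second paragraph: one must exploit the successive-block structure to compensate for the fact that $\mathcal{S}_{n-1}$ is spreading only upward, losing just a factor of $2$ in the basic s.c.c.\ constant. Apart from this, everything follows mechanically from the two cited propositions, which is why the paper (and the reference on which it relies) omit the calculation.
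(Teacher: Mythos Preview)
Your proof is correct and follows precisely the route the paper indicates: combine Proposition~\ref{without this we are doomed} with Proposition~\ref{scc in tsirelson}, bridging them via the spreading argument that transfers the $(n,\e)$-basic s.c.c.\ structure from the $\psi(k)=\min\supp x_k$ indexing to the $\phi(k)=\max\supp x_k$ indexing at the cost of doubling $\e$. This is exactly the argument in \cite[Corollary 2.9]{AM1} to which the paper defers, so nothing further is needed.
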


Using Propositions \ref{basic scc exist in abundance} and \ref{estimate on scc} one can easily derive the next result. For a proof see \cite[Corollary 2.10]{AM1}.

\begin{prp}\label{shrinking}
The basis of $\X$ is shrinking. In particular, the dual of $\X$ is separable.
\end{prp}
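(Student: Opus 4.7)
The plan is to argue by contradiction using the special convex combination estimate from Corollary \ref{estimate on scc}. Recall the standard characterization: the basis of $\X$ is shrinking if and only if every bounded block sequence in $\X$ is weakly null. So suppose towards a contradiction that the basis is not shrinking. Then there exist a bounded block sequence $(x_k)_k$ in $\X$, which after normalization we may assume satisfies $\|x_k\|\leqslant 1$ for all $k$, together with a functional $x^*\in\X^*$ such that $x^*(x_k)\not\to 0$. After passing to a subsequence and possibly replacing $x^*$ by $-x^*$, we obtain a $\delta>0$ with $x^*(x_k)\geqslant\delta$ for every $k$.

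Next I would choose $n\in\N$ and $\e>0$ so small that
\begin{equation*}
\frac{6}{2^n} + 12\e < \frac{\delta}{\|x^*\|}.
\end{equation*}
Set $\psi(k) = \min\supp x_k$ and $M = \{\psi(k) : k\in\N\}$, which is an infinite subset of $\N$ since the $x_k$ are successive. By Proposition \ref{basic scc exist in abundance}, applied to $M$, $n$ and $\e$, there exist a finite set $F\subseteq M$ and non-negative scalars $(c_j)_{j\in F}$ with $\sum_{j\in F}c_j=1$ such that $\sum_{j\in F}c_je_j$ is a $(n,\e)$-basic s.c.c. Letting $K = \{k : \psi(k)\in F\}$ and $d_k = c_{\psi(k)}$ for $k\in K$, the vector $y = \sum_{k\in K}d_kx_k$ is by Definition \ref{def scc} a $(n,\e)$-special convex combination of the block sequence $(x_k)_{k\in K}$.

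Corollary \ref{estimate on scc} then yields $\|y\|\leqslant 6/2^n + 12\e$. On the other hand, using $\sum_{k\in K}d_k = 1$ and $x^*(x_k)\geqslant\delta$, we get
\begin{equation*}
x^*(y) = \sum_{k\in K}d_k\,x^*(x_k) \geqslant \delta,
\end{equation*}
hence $\|y\| \geqslant \delta/\|x^*\|$. Combining these inequalities contradicts the choice of $n$ and $\e$. Therefore the basis of $\X$ is shrinking. The separability of $\X^*$ is then immediate from the classical fact that the biorthogonal functionals $(e_i^*)_i$ form a Schauder basis of $\X^*$, so the $\Q$-linear combinations of $(e_i^*)_i$ are norm-dense in $\X^*$.

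The only step that requires any care is the reduction in the opening paragraph: one must make sure the coefficient $\delta$ can be obtained uniformly along a subsequence, but this is immediate from passing to a subsequence on which $x^*(x_k)$ has constant sign and modulus bounded below. Everything else is a direct application of the machinery that has already been set up, so there is no real obstacle; the proof is essentially the same as in the unconditional-frame setting.
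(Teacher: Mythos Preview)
Your argument is correct and is exactly the approach the paper indicates: the paper does not spell out a proof but simply points to Proposition \ref{basic scc exist in abundance} and Corollary \ref{estimate on scc} (and to \cite[Corollary 2.10]{AM1}), and your contradiction argument using a special convex combination built on a non-weakly-null bounded block sequence is precisely how those ingredients are meant to be combined.
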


We now give some definitions which will be crucial in the next sections, where we prove the properties of the space $\X$. Rapidly increasing sequences are defined exactly as in \cite[Definition 2.13]{AM1}.

\begin{dfn}\label{RIS}
Let $C\geqslant 1$ and $(n_k)_k$ be a strictly increasing sequence of natural numbers. A block sequence $(x_k)_k$ is called a $(C,(n_k)_k)$-rapidly increasing sequence (or $(C,(n_k)_k)$-RIS) if $\|x_k\| \leqslant C$ for all $k$ and the following hold:
\begin{enumerate}

\item[(i)] for every $k$ and every weighted functional $f$ in $\WT$ with $w(f) = j < n_k$, we have $|f(x_k)| < C/2^j$ and

\item[(ii)] for every $k$, $1/2^{n_{k+1}}\max\supp x_k < 1/2^{n_k}$.

\end{enumerate}
\end{dfn}

The notion of a $(C,\theta,n)$-vector and a $(C,\theta,n)$-exact vector is defined identically as in \cite[Definition 2.15]{AM1}.

\begin{dfn}\label{def vector}
Let $n\inn$, $C\geqslant 1$ and $\theta>0$. A vector $x\in\X$ is called a $(C,\theta,n)$-vector if there exist $0<\e< 1/(36C2^{3n})$ and a block sequence $(x_k)_{k=1}^m$ with $\|x_k\| \leqslant C$ for $k=1,\ldots,m$ such that:

\begin{enumerate}

\item[(i)] $\min\supp x_1 \geqslant 8C2^{2n}$,

\item[(ii)] there exist non-negative real numbers $(c_k)_{k=1}^m$ so that the vector $\sum_{k=1}^mc_kx_k$ is a $(n,\e)$-s.c.c.,

\item[(iii)] $x = 2^n\sum_{k=1}^mc_kx_k$ and $\|x\| \geqslant \theta$.

\end{enumerate}
If moreover there exists a strictly increasing sequence of natural numbers $(n_k)_{k=1}^m$ with $n_1>2^{2n}$ so that $(x_k)_{k=1}^m$ is a $(C,(n_k)_{k=1}^m)$-RIS, then $x$ is called a $(C,\theta,n)$-exact vector.
\end{dfn}

\begin{rmk}\label{exact vector norm}
Let $x$ be a $(C,\theta,n)$-vector in $\X$. Then, using Corollary \ref{estimate on scc} we conclude that $\|x\| < 7C$.
\end{rmk}

\begin{rmk}\label{inf norm vector}
Let $x$ be a $(C,\theta,n)$-vector in $\X$. By the choice of $\e$ and $\|x_k\| \leqslant C$ for $k=1,\ldots,m$, we obtain $\|x\|_\infty < 1/(2^{2n}36)$.
\end{rmk}

\section{The $\al$-index}\label{section index}
In all recent constructions involving saturation under constraints (\cite{AM1}, \cite{AM2}, \cite{ABM}, \cite{BFM}), the $\al$-index has been used to help determine the spreading models admitted by block sequences. In contrast to the HI constructions \cite{AM1} and \cite{AM2}, where the $\al$-index is not sufficient to fully characterize the spreading models of block sequences, the present case resembles more closely the unconditional example from \cite{ABM}, where the $\al$-index is the only necessary tool to study spreading models admitted by the space. This is due to the fact that only $\al$-averages, more precisely \ac-averages, are the only ingredient used to construct weighted functionals. The definition of the $\al$-index of a block sequence given below is identical to the one from \cite{AM1} and \cite{AM2}.

\begin{dfn}\label{al index}
Let $(x_k)_k$ be a block sequence in $\X$ that satisfies the following: for every $n\inn$, for every very fast growing sequence of \ac-averages of $\WT$ $(\al_q)_q$, for every increasing sequence of subsets of the natural numbers $(F_m)_m$, such that $(\al_q)_{q\in F_m}$ is $\Sn$-admissible for all $m\inn$ and for every subsequence $(x_{k_m})_m$ of $(x_k)_k$, we have that
$$\lim_k\sum_{q\in F_m}|\al_q(x_{k_m})| = 0.$$
Then we say that the $\al$-index of $(x_k)_k$ is zero and write $\adxk = 0$. Otherwise we write $\adxk > 0$.
\end{dfn}

The next characterization, of when a block sequence has $\al$-index zero, and its proof can be found in \cite[Proposition 3.3]{AM1}. Although here it is formulated slightly differently, the two versions are easily seen to be equivalent.

\begin{prp}\label{index char}
Let $(x_k)_k$ be a block sequence in $\X$. The following assertions are equivalent.
\begin{itemize}

\item[(i)] The $\al$-index of $(x_k)_k$ is zero.

\item[(ii)] For every $\e>0$ there exists $j\inn$ such that for every $n\inn$ there exists $k_n\inn$ such that for every
$k\geqslant k_n$ and for every very fast growing and $\mathcal{S}_n$-admissible sequence of \ac-averages  $(\al_q)_{q=1}^d$, with $s(\al_q) \geqslant j$ for $q=1,\ldots,d$, we have that $\sum_{q=1}^d|\al_q(x_k)| < \e$.

\end{itemize}
\end{prp}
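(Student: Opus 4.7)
The plan is to prove the equivalence by handling each direction separately.

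For (ii) $\Rightarrow$ (i) I would reason directly. Fix the data of (i): $n\inn$, a very fast growing sequence $(\al_q)_q$ of \ac-averages, successive subsets $(F_m)_m$ of $\N$ with each $(\al_q)_{q\in F_m}$ being $\Sn$-admissible, and a subsequence $(x_{k_m})_m$. Given $\de>0$, apply (ii) with $\e=\de$ to obtain $j$ and, for this $n$, the threshold $k_n$. Since $(\al_q)_q$ is very fast growing, $s(\al_q)\to\infty$, so for $m$ large enough every $q\in F_m$ satisfies $s(\al_q)\geqslant j$; since also $k_m\to\infty$, condition (ii) applied to the sub-family $(\al_q)_{q\in F_m}$, itself very fast growing and $\Sn$-admissible, yields $\sum_{q\in F_m}|\al_q(x_{k_m})|<\de$. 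As $\de>0$ was arbitrary, the limit is zero.

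For (i) $\Rightarrow$ (ii) I would establish the apparently stronger statement $(\dagger)$: for every $n\inn$ and $\e>0$ there exists $K\inn$ such that for every $k\geqslant K$ and every very fast growing $\Sn$-admissible family $(\al_q)_{q=1}^d$ of \ac-averages of $\WT$, $\sum_{q=1}^d|\al_q(x_k)|<\e$. This gives (ii) with $j=1$, since the size of every \ac-average is at least $1$. I would derive $(\dagger)$ from (i) by contradiction: assume $(\dagger)$ fails for some $n$ and $\e_0$, so that for each $K\inn$ there are $k_K\geqslant K$ and a very fast growing $\Sn$-admissible family $(\al_q^{(K)})_{q=1}^{d_K}$ with $\sum_q|\al_q^{(K)}(x_{k_K})|\geqslant\e_0$. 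The aim is to weld these pieces into a single very fast growing sequence $(\beta_p)_p$, together with successive $\Sn$-admissible index blocks $F_1<F_2<\cdots$ and a subsequence of $(x_k)_k$, so that $\sum_{p\in F_m}|\beta_p(x_{k_m})|$ remains bounded below uniformly, contradicting (i).

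The construction would select $K_1<K_2<\cdots$ recursively with $k_m:=k_{K_m}\to\infty$ and $\min\supp x_{k_{m+1}}$ majorizing anything prescribed at step $m$. Each $\al_q^{(K_m)}$ is replaced by its restriction to $\ran x_{k_m}$: by Remark \ref{restrictions are in} this preserves the \ac-average type and the evaluation at $x_{k_m}$, and, since the Schreier families are spreading, it also preserves the very fast growing condition within the family and the $\Sn$-admissibility. The restricted families are concatenated, with basic-average fillers $\beta_m=(1/N_m)e_{j_m}^*$ inserted between them, chosen with $N_m>2^{\max\supp\tilde\al_{d_m}^{(m)}}$ and $j_m$ placed just above $\max\supp\tilde\al_{d_m}^{(m)}$, so that the very fast growing condition holds from the last average of family $m$ to $\beta_m$. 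The set $F_m$ consists of the positions of the $m$-th family (fillers excluded), inheriting its $\Sn$-admissibility from the original family.

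The main obstacle is the very fast growing condition from $\beta_m$ to the first average $\tilde\al_1^{(m+1)}$ of the next family, since the size of the first element of a very fast growing family is a priori unconstrained. I would bypass this by omitting $\tilde\al_1^{(m+1)}$ from $F_{m+1}$ and letting $F_{m+1}$ begin at $\tilde\al_2^{(m+1)}$, whose size exceeds $2^{\max\supp\tilde\al_1^{(m+1)}}\geqslant 2^{\min\supp x_{k_{m+1}}}$ and can therefore be made to exceed $2^{\max\supp\beta_m}$ by the choice of $K_{m+1}$. A short dichotomy, using that $(x_k)$ may be assumed bounded, shows that the deletion of one term per family either preserves $\sum_{q\in F_m}|\tilde\al_q^{(m)}(x_{k_m})|\geqslant\e_0/2$ or produces a singleton family $\{\tilde\al_1^{(m+1)}\}$ that, after a filler adjustment, independently contradicts (i); in either case (i) is violated, completing the contrapositive.
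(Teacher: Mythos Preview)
Your argument for (ii)$\Rightarrow$(i) is fine. The gap is in (i)$\Rightarrow$(ii), where you aim for the strictly stronger auxiliary statement $(\dagger)$, namely (ii) with $j=1$. This does not follow from (i).

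A concrete witness is the basis $(e_k)_k$ itself. It has $\al$-index zero: given a very fast growing sequence $(\al_q)_q$, index sets $(F_m)_m$ as in Definition~\ref{al index}, and a subsequence $(e_{k_m})_m$, at most one $q$ can have $k_m\in\supp\al_q$, and for that $q$ one has $|\al_q(e_{k_m})|\leqslant\|\al_q\|_\infty\leqslant 1/s(\al_q)$; since $k_m\to\infty$ forces this $q$ to tend to infinity along the block sequence $(\al_q)_q$, the sum tends to zero. Yet $(\dagger)$ fails for $(e_k)_k$: for every $k$ the singleton family $\{e_k^*\}$ is a size-$1$ basic (hence \ac) average, vacuously very fast growing and $\Sn$-admissible, with $|e_k^*(e_k)|=1$.

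The breakdown in your argument is exactly the ``singleton'' branch of the dichotomy. When $|\tilde\al_1^{(m)}(x_{k_m})|\geqslant\e_0/2$ for infinitely many $m$, these first averages may all have uniformly bounded size, and no filler adjustment can interlace them into a single very fast growing sequence, since that would force $s(\tilde\al_1^{(m+1)})>2^{\max\supp\beta_m}\to\infty$. This is precisely why the parameter $j$ in (ii) is not cosmetic: in the contrapositive, the negation of (ii) lets one demand, at each stage of the gluing, that every average in the next witnessing family already have size exceeding any prescribed threshold, so the concatenation is very fast growing without discarding a first term and the dichotomy never arises. The paper does not reproduce the argument but refers to \cite[Proposition 3.3]{AM1}.
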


The next result is proved in \cite[Proposition 3.5]{AM1}.

\begin{prp}\label{if al positive}
Let $(x_k)_k$ be a seminormalized block sequence in $\X$ with $\adxk > 0$. Then there exist $\theta>0$ and a subsequence $(x_{k_m})_m$ of $(x_k)_k$ that generates an $\ell_1^n$ spreading model with a lower constant $\theta/2^n$, for all $n\inn$. More precisely, for every $n\inn$, subset of the natural numbers  $F$, so that $(x_{k_m})_{m\in F}$ is $\Sn$-admissible, and real numbers $(c_m)_{m\in F}$ we have that
$$\left\|\sum_{m\in F}c_mx_{k_m}\right\| \geqslant \frac{\theta}{2^n}\sum_{m\in F}|c_m|.$$
In particular, for all $k_0$, $n\inn$, there exists a finite subset of the natural numbers $F$ with $\min F\geqslant k_0$ and non-negative real numbers $(c_m)_{m\in F}$, such that the vector $x = 2^n\sum_{m\in F}c_mx_{k_m}$ is a $(C,\theta,n)$-vector, where $C = \sup\{\|x_k\|: k\inn\}$.
\end{prp}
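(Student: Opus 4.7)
The assumption $\adxk>0$ is the negation of Definition \ref{al index}, so after passing to subsequences and relabeling I expect to extract uniform data: a fixed $n_0\inn$, a constant $\theta_0>0$, a very fast growing sequence of \ac-averages $(\al_q)_q$, an increasing sequence of finite subsets $(F_m)_m$ of $\N$ with $(\al_q)_{q\in F_m}$ being $\Sn$-admissible for $n=n_0$ and every $m$, and a subsequence $(x_{k_m})_m$ of $(x_k)_k$ with
\[\sum_{q\in F_m}|\al_q(x_{k_m})|\geqslant\theta_0\quad\text{for all }m\inn.\]

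The heart of the argument is to turn this data, for every $n\inn$ and every $\Sn$-admissible family $(x_{k_{m_j}})_{j=1}^{d}$ with real coefficients $(c_{m_j})_{j=1}^{d}$, into a single weighted functional $f\in\WT$ of weight $n_0+n$ which bounds $\sum_{j}c_{m_j}x_{k_{m_j}}$ from below. For each $j$ I would replace every $\al_q$, $q\in F_{m_j}$, by its restriction $\tilde\al_q:=(\ran x_{k_{m_j}})\al_q$, discarding those which become zero. By Remark \ref{restrictions are in} each $\tilde\al_q$ is an \ac-average of the same size and type, $\tilde\al_q(x_{k_{m_j}})=\al_q(x_{k_{m_j}})$, and $\supp\tilde\al_q\subset\ran x_{k_{m_j}}$. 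Writing $\eta_j=\mathrm{sgn}(c_{m_j})$ and $\e_q^j=\mathrm{sgn}(\al_q(x_{k_{m_j}}))$, I would form
\[f=\frac{1}{2^{n_0+n}}\sum_{j=1}^{d}\sum_{q\in F_{m_j}}\eta_j\e_q^j\,\tilde\al_q,\]
which, by the symmetry of $\WT$, is a plus/minus weighted sum of \ac-averages.

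The main obstacle is the admissibility check for $f$, which is where the Schreier calculus of Section 2.1 is needed. Each block $(\tilde\al_q)_{q\in F_{m_j}}$ is $\mathcal{S}_{n_0}$-admissible with $\min\supp\tilde\al_q\geqslant\min\supp x_{k_{m_j}}$; since $\{\min\supp x_{k_{m_j}}:j=1,\ldots,d\}\in\Sn$, the spreading property of $\Sn$ places the sequence of block-minima in $\Sn$, and the identity $\Sn\ast\mathcal{S}_{n_0}=\mathcal{S}_{n_0+n}$ gives $\mathcal{S}_{n_0+n}$-admissibility for the combined family. The very fast growing condition is inherited from $(\al_q)_q$, since restriction does not enlarge maxima of supports. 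Hence $f\in\WT$ with $w(f)=n_0+n$, and the support-disjointness of the $\tilde\al_q$'s with $x_{k_{m_i}}$ for $i\neq j$ forces
\[f\Big(\sum_{j}c_{m_j}x_{k_{m_j}}\Big)=\frac{1}{2^{n_0+n}}\sum_{j}|c_{m_j}|\sum_{q\in F_{m_j}}|\al_q(x_{k_{m_j}})|\geqslant\frac{\theta_0}{2^{n_0+n}}\sum_{j}|c_{m_j}|.\]
Setting $\theta=\theta_0/2^{n_0}$ then yields the $\ell_1^n$ lower estimate with constant $\theta/2^n$ that is claimed.

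For the ``in particular'' statement, given $k_0$ and $n$, I would first enlarge $k_0$ so that $\min\supp x_{k_{k_0}}\geqslant 8C2^{2n}$, which is possible because $(x_k)_k$ is a block sequence. Then, applying Proposition \ref{basic scc exist in abundance} to the infinite set $\{\min\supp x_{k_m}:m\geqslant k_0\}$ with $\e<1/(36C2^{3n})$, I obtain non-negative scalars $(c_m)_{m\in F}$, $F\subset[k_0,\infty)$, so that $\sum_{m\in F}c_me_{\min\supp x_{k_m}}$ is a $(n,\e)$-basic s.c.c. The vector $y=\sum_{m\in F}c_mx_{k_m}$ is then a $(n,\e)$-s.c.c.\ in the sense of Definition \ref{def scc}, and applying the lower estimate just proved to the $\Sn$-admissible family $(x_{k_m})_{m\in F}$ yields $\|y\|\geqslant\theta/2^n$ (using $\sum c_m=1$). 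Hence $x=2^ny$ has $\|x\|\geqslant\theta$ and satisfies all three conditions of Definition \ref{def vector}, giving the required $(C,\theta,n)$-vector.
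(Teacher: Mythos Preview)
Your argument is correct and is essentially the standard one from \cite[Proposition 3.5]{AM1}, which is what the paper cites in lieu of a proof: extract uniform data from the negation of $\adxk=0$, restrict the \ac-averages to the ranges of the $x_{k_{m_j}}$'s, and assemble a weighted functional of weight $n_0+n$ via the convolution identity $\Sn\ast\mathcal{S}_{n_0}=\mathcal{S}_{n_0+n}$ and the spreading property. The ``in particular'' clause is likewise handled as expected, via Proposition~\ref{basic scc exist in abundance} and the lower estimate just obtained.
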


We now prove that block sequences with $\al$-index zero admit only $c_0$ as a spreading model and that Schreier sums of them define rapidly increasing sequences.

\begin{prp}\label{if al zero}
Let $(x_k)_k$ be a normalized block sequence in $\X$ with $\adxk = 0$. Then $(x_k)_k$ has a subsequence, which we also denote by $(x_k)_k$, that generates a spreading model which is isometric to the unit vector basis of $c_0$. Moreover, there exists a strictly increasing sequence of natural numbers $(j_k)_k$ so that for every natural numbers $n \leqslant k_1 < \cdots <k_n$, real numbers $(c_i)_{i=1}^n$ and weighted functional $f$ of $\WT$ with $w(f) =j < j_n$, we have
$$\left|f\left(\sum_{i=1}^nc_ix_{k_i}\right)\right| <  \frac{9/8}{2^j}\max_{1\leqslant i\leqslant n}|c_i|.$$
\end{prp}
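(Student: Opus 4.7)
My plan is to first apply Proposition~\ref{index char} with a sequence of errors $\e_l = 1/(16 \cdot 2^l)$ to extract a strictly increasing sequence $(j_l)_l$: for each $l$ there is $j_l$ so that every very fast growing, $\mathcal{S}_m$-admissible family of \ac-averages whose sizes are at least $j_l$ has total mass less than $\e_l$ on $x_k$, for $k$ past a threshold $k^*_{l,m}$. I then pass diagonally to a subsequence of $(x_k)_k$ (still denoted $(x_k)_k$) satisfying $\min\supp x_k \ge \log_2 j_k$ and having original position exceeding every $k^*_{l,m}$ with $l\le k$ and $m\le j_l$.

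For the moreover bound, fix $n\le k_1<\cdots<k_n$, reals $(c_i)_{i=1}^n$, and a weighted functional $f = (1/2^j) \sum_{q=1}^d \al_q$ with $j < j_n$. By direct expansion and the triangle inequality,
\[
|f({\textstyle\sum_i} c_i x_{k_i})| \le \frac{1}{2^j} \max_i |c_i| \sum_i S_i,\qquad S_i = \sum_{q:\,\ran\al_q\cap\ran x_{k_i}\ne\varnothing} |\al_q(x_{k_i})|,
\]
so it suffices to show $\sum_i S_i \le 9/8$. Let $q_0$ be the smallest $q$ whose $\al_q$ meets some $\ran x_{k_i}$. The very-fast-growing hypothesis together with $\min\supp x_{k_1} \ge \log_2 j_n$ forces $s(\al_q) > j_n$ for every interacting $q > q_0$. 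By Remark~\ref{restrictions are in}, the restrictions $\al_q|_{\ran x_{k_i}}$ form, for each fixed $i$, a very fast growing, $\mathcal{S}_j$-admissible sequence of \ac-averages of sizes $\ge j_n$; applying Proposition~\ref{index char} yields $\sum_{q\ne q_0,\,i\in I_q} |\al_q(x_{k_i})| < \e_n$, whose sum over $i$ is at most $n\e_n \le 1/8$.

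The key technical step is $T_{q_0} = \sum_i |\al_{q_0}(x_{k_i})| \le 1$. If $\al_{q_0}$ is a basic average $(1/s_0)\sum_p \e_p e_{j_p}^*$, this is immediate from the disjointness of the $\ran x_{k_i}$'s and $\|x_{k_i}\|_\infty \le 1$: $T_{q_0} \le (1/s_0) \sum_p \sum_i |x_{k_i}(j_p)| \le d_0/s_0 \le 1$. If $\al_{q_0}$ is a non-basic \ac-average $(1/s_0) \sum_p \e_p g_p$ with $g_p$ weighted of weight $w(g_p) \ge \min L \ge 8$, I apply the moreover bound inductively (through the $W_m$-levels in Remark~\ref{basics of norming}(iii)) to each $g_p$ at a sufficient admissibility level to conclude $\sum_i |g_p(x_{k_i})| \le (9/8)/2^{w(g_p)}$. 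Then, using that the $\phi(w(g_p))$ for \ic, \co, and \ir-averages are distinct elements of $L = \{\ell_k\}$ with $\ell_{k+1} > 2^{2\ell_k}$, one sees $\sum_p 1/2^{w(g_p)} \le 2/2^8 < 1/128$, so $T_{q_0} \le (9/8)/(128 s_0) < 1$. Thus $\sum_i S_i \le 9/8$, which gives the moreover bound.

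The $c_0$ spreading model claim follows by combining the moreover bound with a case-by-case estimate for $f \in \WT$: basis vectors and basic averages give $|f(\sum c_i x_{k_i})| \le \max|c_i|$ directly, while non-basic \ac-averages and weighted functionals give smaller bounds via the moreover estimate and the inductive $W_m$-structure. The lower bound $\|\sum c_i x_{k_i}\| \ge \max|c_i|$ is immediate from the bimonotone basis, so the spreading model is isometric to the unit basis of $c_0$. The main obstacle is the inductive bound $T_{q_0} \le 1$ in the non-basic case: the inductive moreover on the $g_p$'s must be available at their specific weights $w(g_p)$, which need not satisfy $w(g_p) < j_n$, requiring a careful simultaneous induction on the $W_m$-levels of $\WT$ together with tracking the combinatorial constraints on the weights imposed by the tree $\T$ and Definitions~\ref{incomparable naturals} and~\ref{def averages}.
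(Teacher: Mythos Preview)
Your overall architecture is right---induction on the $W_m$-levels, splitting off the first average $\al_{q_0}$, and using the index-zero hypothesis on the remaining very fast growing tail---but the induction you set up is too weak to close, and you have correctly identified exactly where it breaks without actually repairing it.

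The problem is the bound $T_{q_0}\le 1$ for a non-basic \ac-average $\al_{q_0}=(1/s_0)\sum_p\e_pg_p$. You want to apply the moreover bound recursively to each $g_p$ to get $\sum_i|g_p(x_{k_i})|\le(9/8)/2^{w(g_p)}$, but the moreover bound carries the hypothesis $w(g_p)<j_n$ and there is no mechanism forcing this; the weights $w(g_p)$ can be arbitrarily large relative to $j_n$. Your closing sentence concedes this and defers to an unspecified ``careful simultaneous induction'', which is precisely the missing idea.

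The paper's resolution is to strengthen the inductive hypothesis: one proves \emph{two} statements together over $W_m$. First, a general bound
\[
\left|f\left(\sum_{i=1}^nc_ix_{k_i}\right)\right|<\left(1+\frac{3}{2^n}\right)\max_i|c_i|
\]
valid for \emph{every} $f\in W_m$ with no weight restriction; second, the moreover bound for weighted $f$ with $w(f)<j_n$. In the inductive step for the moreover bound, one does not decompose $\al_{q_0}$ further into its $g_p$'s at all; one simply applies the general bound to $\al_{q_0}\in W_m$, obtaining $|\al_{q_0}(\sum_ic_ix_{k_i})|<1+3/2^n$. Establishing the general bound in turn requires three cases according to where $w(f)$ sits among the $j_{k_i}$'s: when $w(f)<j_{k_1}$ it is essentially the moreover computation; when $j_{k_{i_0}}\le w(f)<j_{k_{i_0+1}}$ one uses the trivial estimate $|f(x_{k_{i_0}})|\le 1$, an $\|f\|_\infty$-bound on the part $i<i_0$ (this is why one also imposes $(1/2^{j_{k+1}})\max\supp x_k<1/2^k$ when selecting the subsequence), and the first case on the tail $i>i_0$; the case $w(f)\ge j_{k_n}$ is similar. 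Once you carry this companion estimate through the induction, the $c_0$ spreading model and the moreover bound fall out together.
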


\begin{proof}
Using Proposition \ref{index char}, we pass to a subsequence of $(x_k)_k$, again denoted by $(x_k)_k$, and choose a strictly increasing sequence of natural numbers so that the following are satisfied:
\begin{itemize}

\item[(i)] for every $k\inn$, $1/2^{j_{k+1}}\max\supp x_k < 1/2^k$ and

\item[(ii)] for every $k_0$, $k\inn$ with $k\geqslant k_0$ and every very fast growing and $\mathcal{S}_{j_{k_0}}$-admissible sequence of \ac-averages $(\al_q)_{q=1}^n$ with $s(\al_q) \geqslant \max\supp x_{k_0}$ we have
$$\sum_{q=1}^d|\al_q(x_k)| < 1/(k_02^{k_0}).$$

\end{itemize}
We claim that $(x_k)_k$ generates a spreading model isometric to $c_0$. Using the third assertion of Remark \ref{basics of norming} we shall inductively prove the following: for every $f\in W_m$, natural numbers $n \leqslant k_1 <\cdots < k_n$ and real numbers $c_1,\ldots,c_n$ in $[-1,1]$ we have
\begin{equation}\label{green mushroom}
\left|f\left(\sum_{i=1}^nc_ix_{k_i}\right)\right| < 1 + \frac{3}{2^n}.
\end{equation}
If moreover $f$ is a weighted functional with $w(f)  = j < j_n$, then
\begin{equation}\label{one up}
\left|f\left(\sum_{i=1}^nc_ix_{k_i}\right)\right| <  \frac{1 + 4/2^n}{2^j}.
\end{equation}
The desired conclusion clearly follows from the above and the fact that the basis of $\X$ is bimonotone, omitting if necessary a finite number of terms of the sequence $(x_k)_k$.

We now proceed to the proof of the inductive step. The case $m=0$ is an immediate consequence of the fact that the sequence $(x_k)_k$ is normalized and $W_0 = \{\pm e_i^*:\;i\inn\}$. Assume now that $m$ is such that the conclusion holds for every functional in $W_m$ and let $f\in W_{m+1}$. If $f$ is an \ac-average of $W_m$, then by the inductive assumption we conclude that \eqref{green mushroom} holds. Otherwise, $f$ is a weighted functional of weight $w(f) = j$, i.e. there is a very fast growing and $\Sj$ admissible sequence of \ac-averages of $W_m$ $(\al_q)_{q=1}^d$ so that $f = (1/2)^j\sum_{q=1}^d\al_q$. Assuming that $f(\sum_{i=1}^nc_ix_{k_i})\neq 0$, set $q_0 = \min\{q:\;\max\supp\al_q\geqslant \min\supp x_{k_1}\}$. Omitting, if it is necessary, the first $q_0 - 1$ averages, we may assume that $q_0 = 1$. the  We distinguish three cases concerning the weight of $f$.

{\em Case 1:} $j<j_{k_1}$. Since the sequence $(\al_q)_{q=1}^d$ is very fast growing, for $q>1$ we have $s(\al_q) > \max\supp \al_1 \geqslant \min\supp x_{k_1}$. Also, since $(\al_q)_{q=2}^d$ is $\mathcal{S}_j$ admissible with $j < j_{k_1}$, by (ii) we conclude:
\begin{equation}\label{goomba}
\sum_{q=2}^d\left|\al_q\left(\sum_{i=1}^nc_ix_{k_i}\right)\right| < n\frac{1}{k_12^{k_1}} \leqslant \frac{1}{2^n}.
\end{equation}
Moreover, by the inductive assumption we obtain $|\al_1(\sum_{i=1}^nc_ix_{k_i})| < 1 + 3/2^n$. Combining this with \eqref{goomba}:
\begin{equation}
\left|f\left(\sum_{i=1}^nc_ix_{k_i}\right)\right| < \frac{1 + 4/2^n}{2^j}.
\end{equation}
This concludes the proof of the first case and also \eqref{one up} of the inductive assumption.

{\em Case 2:} there is $1\leqslant i_0 < n$ so that $j_{k_{i_0}} \leqslant j < j_{k_{i_0+1}}$. Arguing in an identical manner as in the previous case, we obtain
\begin{equation}\label{koopa}
\left|f\left(\sum_{i>i_0}c_ix_{k_i}\right)\right| < \frac{1 + 4/2^{k_{i_0+1}}}{2^{j_{k_{i_0}}}} \leqslant \frac{2}{2^n}.
\end{equation}
Also, if $i_0 > 1$, by (i) we have that $1/2^j\max\supp x_{k_{i_0-1}} < 1/2^{k_{i_0-1}}$ and hence:
\begin{equation}\label{cheep cheep}
\left|f\left(\sum_{i<i_0}c_ix_{k_i}\right)\right| \leqslant \|f\|_\infty\max\supp x_{k_{i_0-1}} <  \frac{1}{2^{k_{i_0-1}}} \leqslant \frac{1}{2^n}.
\end{equation}
Combing \eqref{koopa} and \eqref{cheep cheep} with the fact that $|f(x_{k_{i_0}})| \leqslant 1$ we conclude
\begin{equation}
\left|f\left(\sum_{i=1}^nc_ix_{k_i}\right)\right| < 1 +  \frac{3}{2^n}.
\end{equation}

{\em Case 3:} $j\geqslant j_{k_n}$. Using that $|f(x_{k_n})| \leqslant 1$ and arguing as in \eqref{cheep cheep} we obtain $|f(\sum_{i=1}^nc_ix_{k_i})| < 1 + 1/2^n$ and this concludes the proof.
\end{proof}

Propositions \ref{if al positive} and \ref{if al zero} yield the following result, which characterizes the spreading models admitted by a given block sequence.

\begin{cor}\label{sm vs adx}
Let $(x_k)_k$ be a normalized block sequence in $\X$. Then $(x_k)_k$ has a subsequence that generates either an isometric $c_0$ spreading model or an $\ell_1^n$ spreading model for every $n\inn$. More precisely, the assertions stated below hold.
\begin{itemize}

\item[(i)] The sequence $(x_k)_k$ admits only $c_0$ as a spreading model if and only if $\adxk = 0$.

\item[(ii)] The sequence $(x_k)_k$ has a subsequence that generates an $\ell_1^n$ spreading model for every $n\inn$ if and only if $\adxk > 0$.

\end{itemize}
\end{cor}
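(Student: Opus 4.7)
The plan is to assemble Propositions \ref{if al positive} and \ref{if al zero} using one elementary observation about the $\al$-index: the condition $\adxk=0$ is hereditary under passage to subsequences, since Definition \ref{al index} already quantifies over every subsequence of $(x_k)_k$. Contrapositively, if some subsequence of $(x_k)_k$ has strictly positive $\al$-index, then so does the original sequence. The main dichotomy in the statement is then immediate: when $\adxk=0$, Proposition \ref{if al zero} supplies a subsequence generating an isometric $c_0$ spreading model, and when $\adxk>0$, Proposition \ref{if al positive} supplies a subsequence generating an $\ell_1^n$ spreading model with lower constant $\theta/2^n$ for every $n\in\mathbb{N}$.

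For the equivalence in (i), suppose $\adxk=0$, and let $(x_{k_m})_m$ be any subsequence generating a spreading model $(\tilde e_m)_m$. By heredity, $\al((x_{k_m})_m)=0$, and Proposition \ref{if al zero} provides a further subsequence generating isometric $c_0$; since spreading models are preserved under further subsequences, $(\tilde e_m)_m$ coincides with the standard $c_0$-basis, so $(x_k)_k$ admits only $c_0$ as a spreading model. Conversely, if $\adxk>0$ then Proposition \ref{if al positive} (applied with $n=1$) furnishes a subsequence satisfying a uniform $\ell_1^1$ lower estimate with constant $\theta/2>0$; this is incompatible with a $c_0$ spreading model, so $(x_k)_k$ admits a non-$c_0$ spreading model.

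For (ii), the forward implication is exactly Proposition \ref{if al positive}. For the converse, if some subsequence $(x_{k_m})_m$ generates $\ell_1^n$ spreading models for every $n$, then in particular $(x_{k_m})_m$ does not admit only $c_0$ as a spreading model, and (i) applied to this subsequence yields $\al((x_{k_m})_m)>0$; the contrapositive form of heredity then forces $\adxk>0$. The only point requiring a moment's care is that the $\ell_1^n$ lower estimates provided by Proposition \ref{if al positive} persist under further subsequences, which is automatic because the Schreier families $\mathcal{S}_n$ are spreading. No serious obstacle is expected; the entire argument is a direct assembly of the two preceding propositions together with the hereditary nature of the $\al$-index-zero condition.
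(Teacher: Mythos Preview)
Your proposal is correct and follows essentially the same approach as the paper, which simply states that the result is yielded by Propositions \ref{if al positive} and \ref{if al zero} without further detail. You have carefully filled in the routine verifications (heredity of the condition $\adxk=0$ under subsequences, and the passage between the two directions of each equivalence), all of which are straightforward and in line with how the paper intends the corollary to be read.
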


\section{Estimations on exact vectors}\label{section heavy estimates}
In this section we provide estimations on exact vectors whose sums define non-trivial weakly Cauchy sequences in $\mathfrak{X}_\mathcal{U}$ and in the general case provide the fact that the space $\X$ is hereditarily indecomposable. We give the definitions of exact vectors and exact sequences and several technical intermediate steps are presented in order to achieve the main estimate.

The next estimate uses Proposition \ref{without this we are doomed} and the properties of special convex combinations. It is proved in \cite[Lemma 3.8]{AM1} and identical arguments also apply in this case.

\begin{lem}\label{a very important estimate}
Let $x$ be a $(C,\theta,n)$-vector in $\X$. Let also $(a_q)_{q=1}^d$ be a very fast growing and $\mathcal{S}_j$-admissible sequence of \ac-averages, with $j<n$. Then
\begin{equation*}
\sum_{q=1}^d|\al_q(x)| < \frac{6C}{s(\al_1)} + \frac{1}{2^n}.
\end{equation*}
\end{lem}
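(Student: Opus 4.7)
The plan is to estimate $\sum_q |\alpha_q(x)|$ by combining the s.c.c.\ structure of $x$ with the very fast growing property of $(\alpha_q)_q$, after a standard sign adjustment. First I would replace each $\alpha_q$ by $\epsilon_q\alpha_q$ with $\epsilon_q = \operatorname{sign}(\alpha_q(x))$; the resulting vectors remain \ac-averages of the same type (for basic, \ic\ and \ir-averages the sign is absorbed globally; for \co-averages it only reverses the alternating pattern), and the sequence is still very fast growing and $\mathcal{S}_j$-admissible since these properties only depend on supports and sizes. After dropping any $q$ whose $\alpha_q$ misses $\ran x$, I may assume every $\alpha_q$ meets $\ran x$, and the quantity to bound becomes $\sum_q \alpha_q(x) = \beta(x)$ with $\beta = \sum_q \epsilon_q\alpha_q$.

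Writing $x = 2^n\sum_{k=1}^m c_k x_k$ and $J_q = \{k:\ \ran\alpha_q \cap \ran x_k \neq \varnothing\}$, I would apply Corollary~\ref{estimate on scc} (rescaled to $\|x_k\|\leqslant C$) to each $\alpha_q \in \WT$:
\begin{equation*}
|\alpha_q(x)| \,=\, 2^n\bigl|\alpha_q\bigl(\textstyle\sum_{k\in J_q} c_k x_k\bigr)\bigr| \,\leqslant\, 6C\textstyle\sum_{k\in J_q} c_k \,+\, 12C\cdot 2^n\e.
\end{equation*}
I then separate $\alpha_1$ from the tail. For $q\geqslant 2$, the very fast growing condition together with the fact that $\alpha_{q-1}$ meets $\ran x$ gives $s(\alpha_q) > 2^{\max\supp\alpha_{q-1}} \geqslant 2^{\min\supp x_1} \geqslant 2^{8C\cdot 2^{2n}}$, so $\|\alpha_q\|_\infty \leqslant 1/s(\alpha_q)$ is astronomically small. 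Combined with $\|x\|_\infty < 1/(36\cdot 2^{2n})$ from Remark~\ref{inf norm vector} and the choice $\e < 1/(36C\cdot 2^{3n})$, the contribution $\sum_{q\geqslant 2}|\alpha_q(x)|$, together with the accumulated s.c.c.\ errors $12C\cdot 2^n\e$ across all $q$, fits inside the $1/2^n$ budget.

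For $\alpha_1$, I would use the sub-decomposition $\alpha_1 = (1/s(\alpha_1))\sum_i g_i$ with $d_1 \leqslant s(\alpha_1)$ successive weighted functionals and apply Corollary~\ref{estimate on scc} to each $g_i$ individually on the set $K_i = \{k:\ \ran g_i \cap \ran x_k \neq \varnothing\}$. Since the $g_i$'s are successive and the $\ran x_k$'s are pairwise disjoint, each $\ran x_k$ meets at most two of the $g_i$'s whose supports extend beyond $\ran x_k$, while all other $g_i$'s in $K_i$ are contained inside a single $\ran x_k$. Summing and dividing by $s(\alpha_1)$ extracts the factor $1/s(\alpha_1)$ cleanly, yielding $|\alpha_1(x)| < 6C/s(\alpha_1)$ up to negligible error, which combines with the tail bound to give the desired estimate.

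The main obstacle is the delicate bookkeeping in the $\alpha_1$ step---tracking how successive $g_i$'s split across the $\ran x_k$'s (interior versus boundary) and verifying that the factor $1/s(\alpha_1)$ survives the summation---and checking that the accumulated $12C\cdot 2^n\e$ residuals across all $q$ truly collapse to at most $1/2^n$ via the admissibility bound on $d$ and the smallness of $\e$. This computation is technical but routine and, as the paper itself notes, transfers from \cite[Lemma 3.8]{AM1} unchanged.
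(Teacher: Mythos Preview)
The paper does not actually prove this lemma; it only remarks that the argument from \cite[Lemma~3.8]{AM1} transfers, using Proposition~\ref{without this we are doomed} (the Tsirelson comparison) together with the special convex combination machinery. Your outline is in the right spirit and ultimately repairable, but two of the key steps are stated too loosely to stand as written.

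First, in the $\alpha_1$ step, applying Corollary~\ref{estimate on scc} to each $g_i$ on $K_i$ yields $|g_i(x)| \leqslant 6C\sum_{k\in K_i}c_k + 12C\cdot 2^n\e$, and after dividing by $s(\alpha_1)$ you need $\sum_i\sum_{k\in K_i}c_k$ to be bounded. Your observation that each $\ran x_k$ meets at most two $g_i$'s \emph{extending beyond} $\ran x_k$ is correct, but it says nothing about the $g_i$'s \emph{contained inside} $\ran x_k$: there may be up to $s(\alpha_1)$ of those for a single $k$, so the double sum is not bounded by an absolute constant. The fix is to treat the two cases separately. For the interior $g_i$'s one has $K_i=\{k\}$ and hence $\sum_{k\in K_i}c_k=c_k<\e$ (singletons lie in $\mathcal{S}_{n-1}$), so their total contribution after dividing by $s(\alpha_1)$ is $O(2^n\e)$, which is absorbed by $1/2^n$. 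For the boundary $g_i$'s your ``at most two per $k$'' count gives $\sum_i\sum_{k\in K_i}c_k\leqslant 2$, producing $12C/s(\alpha_1)$ rather than $6C/s(\alpha_1)$; this is harmless for every application in the paper, but it is not what you wrote.

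Second, the tail estimate for $q\geqslant 2$ cannot be read off from $\|\alpha_q\|_\infty$ alone: $|\alpha_q(x)|\leqslant\|\alpha_q\|_\infty\cdot\|x\|_1$, and $\|x\|_1$ is not controlled. What actually works is to repeat the boundary/interior split for each $\alpha_q$, so that the main term is $O(C/s(\alpha_q))$ with $s(\alpha_q)>2^{\min\supp x_1}$; one must then check that the accumulated $O(2^n\e)$ residuals across all $q$ stay below $1/2^n$, which uses the $\mathcal{S}_j$-admissibility bound on $d$ together with $\min\supp\alpha_1\geqslant\min\supp x_1\geqslant 8C\cdot 2^{2n}$ (after discarding averages that miss $\ran x$). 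This is the routine but genuinely delicate bookkeeping that \cite{AM1} carries out; your sketch gestures at it but does not pin it down.
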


These next two results follows readily form Lemma \ref{a very important estimate} and Proposition \ref{if al zero}. Their proof can also be found in \cite[Propositions 3.9 and 3.10]{AM1}

\begin{prp}\label{vectors generate c0}
Let $C\geqslant 1$ and $\theta>0$. If $(x_k)_k$ is a block sequence in $\X$ so that each $x_k$ is a $(C,\theta,n_k)$-vector, with $(n_k)_k$ a strictly increasing sequence of natural numbers, then $\adxk = 0$ and hence, every spreading model admitted by $(x_k)_k$ is isometric, up to scaling, to the unit vector basis of $c_0$.
\end{prp}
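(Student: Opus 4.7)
The plan is to verify the characterization of zero $\al$-index given by Proposition \ref{index char} using Lemma \ref{a very important estimate}, and then invoke Proposition \ref{if al zero} (after a harmless renormalization) to conclude about spreading models. The hypothesis gives us exactly what Lemma \ref{a very important estimate} is designed to exploit: each $x_k$ is a $(C,\theta,n_k)$-vector with $n_k \to \infty$, so any very fast growing $\mathcal{S}_{n'}$-admissible family of \ac-averages with $n' < n_k$ has its total action on $x_k$ controlled by $6C/s(\al_1) + 1/2^{n_k}$.

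The core estimate: Fix $\e>0$ and choose $j \in \N$ so large that $6C/j < \e/2$. Given $n \in \N$, since $n_k \to \infty$, select $k_n$ so that $n_k > n$ and $1/2^{n_k} < \e/2$ for all $k \geqslant k_n$. Then for every such $k$ and every very fast growing, $\mathcal{S}_n$-admissible sequence $(\al_q)_{q=1}^d$ of \ac-averages with $s(\al_q) \geqslant j$ (so in particular $s(\al_1) \geqslant j$), Lemma \ref{a very important estimate} applies with $n' = n < n_k$, yielding
\begin{equation*}
\sum_{q=1}^d |\al_q(x_k)| \;<\; \frac{6C}{s(\al_1)} + \frac{1}{2^{n_k}} \;<\; \frac{\e}{2} + \frac{\e}{2} \;=\; \e.
\end{equation*}
This is exactly condition (ii) of Proposition \ref{index char}, so $\adxk = 0$.

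For the second assertion, note that by Remark \ref{exact vector norm} and the lower bound $\|x_k\| \geqslant \theta$ in the definition of a $(C,\theta,n)$-vector, $(x_k)_k$ is seminormalized. The normalization $y_k = x_k/\|x_k\|$ then satisfies $\adyk = 0$ as well, since $\sum_q |\al_q(y_k)| \leqslant \theta^{-1}\sum_q |\al_q(x_k)|$. By Proposition \ref{if al zero} (or equivalently Corollary \ref{sm vs adx}(i)), every spreading model admitted by a subsequence of $(y_k)_k$ is isometric to the unit vector basis of $c_0$, and rescaling back by the bounded, bounded-below factors $\|x_k\|$ shows the corresponding spreading models of $(x_k)_k$ are isometric to $c_0$ up to scaling.

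I do not foresee a genuine obstacle: the whole statement is essentially a direct application of Lemma \ref{a very important estimate} followed by the equivalence in Proposition \ref{index char}. The only mild point to watch is matching the parameter $j$ in the characterization with the lower bound on $s(\al_1)$ appearing in the lemma, and verifying that the hypothesis $j < n$ of Lemma \ref{a very important estimate} is automatic once $k_n$ is chosen so that $n_k > n$.
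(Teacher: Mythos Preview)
Your proof is correct and is essentially the argument the paper has in mind: the paper simply states that the result follows readily from Lemma \ref{a very important estimate} and Proposition \ref{if al zero}, and you have supplied precisely these details, using Proposition \ref{index char} as the natural bridge between the two. Your handling of the parameters (choosing $j$ to control $6C/s(\al_1)$ and $k_n$ so that $n<n_k$ and $1/2^{n_k}<\e/2$) is exactly right, and the brief normalization step at the end is a routine justification of the ``up to scaling'' clause.
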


\begin{prp}\label{vectors are ris}
Let $x$ be a $(C,\theta,n)$-vector in $\X$. Then for any weighted functional $f$ in $\WT$ such that $w(f) = j < n$ we have
$$|f(x)| < \frac{7C}{2^j}.$$
\end{prp}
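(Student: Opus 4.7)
The plan is to reduce the statement to a direct application of Lemma \ref{a very important estimate}, which already bounds the action of a very fast growing, $\mathcal{S}_j$-admissible sequence of \ac-averages on a $(C,\theta,n)$-vector when $j<n$. The key observation is that by Remark \ref{basics of norming}(i), a weighted functional $f\in\WT$ with $w(f)=j$ is exactly of the form $f=(1/2^j)\sum_{q=1}^d \al_q$, where $(\al_q)_{q=1}^d$ is a very fast growing and $\mathcal{S}_j$-admissible sequence of \ac-averages of $\WT$. So the hypothesis of Lemma \ref{a very important estimate} is satisfied for free from the definition of a weighted functional.

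First I would write
\begin{equation*}
|f(x)| \;=\; \frac{1}{2^j}\left|\sum_{q=1}^d \al_q(x)\right| \;\leqslant\; \frac{1}{2^j}\sum_{q=1}^d |\al_q(x)|
\end{equation*}
by the triangle inequality. Then applying Lemma \ref{a very important estimate} (whose hypotheses hold since $j<n$) gives
\begin{equation*}
\sum_{q=1}^d |\al_q(x)| \;<\; \frac{6C}{s(\al_1)} + \frac{1}{2^n}.
\end{equation*}

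It remains to bound the right-hand side by $7C$. Since $s(\al_1)\geqslant 1$, one has $6C/s(\al_1)\leqslant 6C$. Moreover, $n>j\geqslant 1$ so $1/2^n\leqslant 1/2$, and since we are assuming $C\geqslant 1$ (part of the definition of a $(C,\theta,n)$-vector), $1/2^n\leqslant C$. Combining,
\begin{equation*}
\frac{6C}{s(\al_1)} + \frac{1}{2^n} \;\leqslant\; 6C + \tfrac{1}{2} \;\leqslant\; 7C,
\end{equation*}
and the inequality on the right is strict as long as $C\geqslant 1$. Plugging back yields $|f(x)|<7C/2^j$, which is the desired bound.

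There is essentially no obstacle: the entire technical content is contained in Lemma \ref{a very important estimate}, and the present statement is a clean packaging of that estimate in the language of weighted functionals. The only care needed is to verify the trivial arithmetic $6C/s(\al_1)+1/2^n<7C$ and to observe that the definition of a weighted functional automatically supplies the very fast growing, $\mathcal{S}_j$-admissible structure that the lemma requires.
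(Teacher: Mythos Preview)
Your proof is correct and follows exactly the approach indicated in the paper, which simply states that the result follows readily from Lemma~\ref{a very important estimate} (and refers to \cite[Proposition 3.10]{AM1} for details). You have spelled out precisely that argument: decompose $f=(1/2^j)\sum_q\al_q$ via Remark~\ref{basics of norming}(i), apply the lemma, and verify the elementary bound $6C/s(\al_1)+1/2^n<7C$ using $s(\al_1)\geqslant 1$, $n\geqslant 2$, and $C\geqslant 1$.
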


We now give the definition of an exact pair and a dependent sequence.
\begin{dfn}
A pair $(f,x)$ where $x$ is a $(C,\theta,n)$-exact vector in $\X$ and $f$ is a weighted functional in $\WT$ with $w(f) = n$, $\ran f\subset\ran x$ and $f(x) = \theta$ is called a $(C,\theta,n)$-exact pair.
\end{dfn}

\begin{dfn}\label{def dependent sequence}
Let $C\geqslant 1$ and $\theta>0$. A sequence of pairs $\{(f_k,x_k)\}_{k=1}^\ell$, where $f_k\in \WT$ and $x_k$ is a vector with rational coefficients in $\X$ for $k=1,\ldots,\ell$, is called a $(C,\theta)$-dependent sequence if the following are satisfied:
\begin{itemize}

\item[(i)] $(f_k,x_k)$ is a $(C,\theta,w(f_k))$-exact pair for $k=1,\ldots,\ell$ and

\item[(ii)] $\{(f_k,x_k)\}_{k=1}^\ell$ is in $\T$,

\end{itemize}
\end{dfn}

We introduce some notation baring similarities to the one used in \cite[Subsection 3.2]{AM1} and \cite{AM2}.

\begin{ntt}
Let $x = 2^n\sum_{k=1}^mc_kx_k$ be a $(C,\theta,n)$-exact vector, with $(x_k)_{k=1}^m$ a $(C,(n_k)_{k=1}^m)$-RIS. Let also $g_1<\cdots<g_d$ be weighted functionals in $\WT$, all of which have weight greater than or equal to $\min L$ satisfying $\phi(w(g_1)) < \cdots < \phi (w(g_d))$ (see Definition \ref{function phi}). We define the following subsets of $\N$:
\begin{eqnarray*}
I_0{(x,(g_i)_{i=1}^d)} &=& \{j: n\leqslant w(g_j) < 2^{2n}\},\\
I_1{(x,(g_i)_{i=1}^d)} &=& \{j: w(g_j) < n\}\;\text{and}\\
I_2{(x,(g_i)_{i=1}^d)} &=& \{j: 2^{2n} \leqslant w(g_j)\}.
\end{eqnarray*}
\end{ntt}

\begin{rmk}\label{compensation for not being a mixed tsirelson space}
Let $x$ be a $(C,\theta,n)$-exact vector and $g_1<\cdots<g_d$ be weighted functionals in $\WT$, all of which have weight greater than or equal to $\min L$ satisfying $\phi(w(g_1)) < \cdots < \phi (w(g_d))$.
\begin{itemize}

\item[(i)] If $n\in L$, then the set $I_0{(x,(g_i)_{i=1}^d)}$ is either empty or a singleton. Indeed, by the choice of $L'$, the fact that $L\subset L'$ and the definition of $\phi$ it is straightforward to check that if $j\in I_0{(x,(g_i)_{i=1}^d)}$, then $\phi(w(g_j)) = n$ and clearly at most one $j$ can satisfy this condition.

\item[(ii)] Also, the sets $I_1(x,(g_i)_{i=1}^d)$, $I_2(x,(g_i)_{i=1}^d)$ are successive intervals of $\{1,\ldots,d\}$, which clearly follows from the fact that $\phi$ is non-decreasing.
\end{itemize}
\end{rmk}

\begin{lem}\label{large weights die}
Let $n\geqslant 2$, $x$ be a $(C,\theta,n)$-exact vector in $\X$ and $g_1<\cdots<g_d$ be weighted functionals in $\WT$, all of which have weight greater than or equal to $\min L$ satisfying $\phi(w(g_1)) < \cdots < \phi (w(g_d))$. If we set $I_2(x) = I_2(x,(g_i)_{i=1}^d)$, then
\begin{equation*}
\sum_{j\in I_2(x)}|g_j(x)| < d\frac{C}{2^n}.
\end{equation*}
\end{lem}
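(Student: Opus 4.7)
The plan is to prove the stronger per-functional bound $|g_j(x)| < C/2^n$ for every $j \in I_2(x)$; since $|I_2(x)| \leqslant d$, summing then yields the stated inequality. I would start from the decomposition $x = 2^n\sum_{k=1}^m c_k x_k$ in the definition of a $(C,\theta,n)$-exact vector, where $(x_k)_{k=1}^m$ is a $(C,(n_k)_{k=1}^m)$-RIS with $n_1 > 2^{2n}$, and $\sum_k c_k x_k$ is a $(n,\e)$-s.c.c.\ with $\e < 1/(36C\cdot 2^{3n})$. Since every singleton lies in $\mathcal{S}_0 \subset \mathcal{S}_{n-1}$, each coefficient satisfies $c_k < \e$.

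Fix $j \in I_2(x)$ and set $w_j := w(g_j) \geqslant 2^{2n}$. Because $(n_k)$ is strictly increasing and begins above $2^{2n}$, there is at most one $k^*$ with $n_{k^*} \leqslant w_j < n_{k^*+1}$; I would split $\{1,\ldots,m\}$ into the three regimes $A = \{k > k^*\}$, $\{k^*\}$, and $B = \{k < k^*\}$. On $A$ the RIS property directly gives $|g_j(x_k)| < C/2^{w_j}$. On $\{k^*\}$ the trivial bound $|g_j(x_{k^*})| \leqslant C$ suffices because $c_{k^*} < \e$. The interesting estimate is on $B$: an easy induction on the construction of $\WT$ yields $\|h\|_\infty \leqslant 2^{-w(h)}$ for every weighted functional $h \in \WT$, and RIS condition (ii) gives $\max\supp x_k < 2^{n_{k+1}-n_k}$; since $n_{k+1} \leqslant n_{k^*} \leqslant w_j$, an $\ell^1$--$\ell^\infty$ estimate produces
\[
|g_j(x_k)| \leqslant |\supp x_k|\cdot\|g_j\|_\infty\cdot\|x_k\|_\infty < C\cdot 2^{n_{k+1}-n_k-w_j} \leqslant C/2^{n_k} \leqslant C/2^{2^{2n}}.
\]

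Summing the three contributions to $\sum_k c_k|g_j(x_k)|$ and multiplying by $2^n$, the $A$- and $B$-parts are each at most $2^n C/2^{2^{2n}}$, while the resonant term is at most $2^n\e C < 1/(36\cdot 2^{2n})$. A direct arithmetic check using $n\geqslant 2$ (so that $2n+1 < 2^{2n}$) and $C\geqslant 1$ then shows $|g_j(x)| < C/2^n$, which strictly beats the average needed, so summing over $j \in I_2(x)$ with $|I_2(x)|\leqslant d$ gives the desired strict inequality.

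The main technical step is the $\ell^1$--$\ell^\infty$ estimate on $B$: it is the mechanism that collapses the whole problem to the single ``resonant'' index $k^*$, and it is where the specific features of a $(C,\theta,n)$-exact vector (the lower bound $n_1 > 2^{2n}$, the RIS growth condition (ii), and the smallness of $\e$) are simultaneously exploited. Once it is in place, everything else reduces to elementary numerical bookkeeping in the parameters $\e$, $n$, and $C$.
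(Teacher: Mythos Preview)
Your proof is correct and follows essentially the same approach as the paper: both reduce to the per-functional bound $|g(x)| < C/2^n$ whenever $w(g) \geqslant 2^{2n}$, and both split $\{1,\ldots,m\}$ into the three regimes determined by the position of $w(g)$ relative to the RIS weights $(n_k)$. The only cosmetic difference is in the treatment of the ``before $k^*$'' regime: the paper bounds $\bigl|g\bigl(2^n\sum_{k<k_0}c_kx_k\bigr)\bigr|$ globally via $\|g\|_\infty\cdot\max\supp x_{k_0-1}\cdot\|x\|_\infty$ (invoking the remark that $\|x\|_\infty < 1/(36\cdot 2^{2n})$), whereas you bound each $|g_j(x_k)|$ individually using $|\supp x_k|\leqslant\max\supp x_k$ and $\|x_k\|_\infty\leqslant C$; both routes give contributions absorbed by $C/2^n$ after the elementary arithmetic with $n\geqslant 2$.
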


\begin{proof}
We will actually show that if $g$ is a weighted functional in $\WT$ with $w(g) \geqslant 2^{2n}$, then $|g(x)| < C/2^n$. If $x = 2^n\sum_{k=1}^mc_kx_k$, with $(x_k)_{k=1}^m$ a $(C,(n_k)_{k=1}^m)$-RIS, recall that according to Definition \ref{def vector} we have that $2^{2n} < n_1$. Set
$$A = \{k:\; n_k \leqslant w(g)\}\;\text{and}\;B = \{k:\; w(g) < n_k\}.$$
If $A\neq\varnothing$, set $k_0 = \max A$.

For $k\in B$ and $1\leqslant k\leqslant m$, since $(x_k)_{k=1}^m$ a $(C,(n_k)_{k=1}^m)$-RIS, we obtain $|g(x_k)| < C/2^{w(g)}$ and hence:
\begin{equation}\label{chickpeas are my favourite}
\left|g\left(2^n\sum_{k\in B}c_kx_k\right)\right| \leqslant 2^nC\sum_{k\in B}\frac{c_k}{2^{w(g)}}\leqslant 2^nC\frac{1}{2^{2^{2n}}} < C\frac{1}{2^n6}
\end{equation}
where we used that $w(g) \geqslant 2^{2n}$ while the last inequality holds for all $n\geqslant 2$.

If $A = \varnothing$ we are done. Otherwise we need some further calculations. Observe that
\begin{equation}\label{black eyed peas are great}
\left|g\left(2^nc_{k_0}x_{k_0}\right)\right| \leqslant 2^nC c_{k_0} < \frac{C}{2^{2n}36}
\end{equation}
where we used that, according to Definition \ref{def vector}, the vector $\sum_{k=1}^mc_kx_k$ is an $(n,\e)$-s.c.c. with $\e < 1/(36C2^{3n})$.

If $A$ is a singleton, then \eqref{chickpeas are my favourite} and \eqref{black eyed peas are great} yield the desired estimate. Otherwise, if $A$ is not a singleton:
\begin{eqnarray*}
\left|g\left(2^n\sum_{k<k_0}c_kx_k\right)\right| &\leqslant& \|g\|_\infty\max\supp x_{k_0-1}\left\|2^n\sum_{k<k_0}c_kx_k\right\|_\infty\\
&\leqslant& \frac{2^{n_{k_0}}}{2^{w(g)}}\left(\frac{1}{2^{n_{k_0}}}\max\supp x_{k_0 - 1}\right)\frac{1}{2^{2n}36}\\
&\leqslant& \frac{1}{2^{n_{k_0-1}}}\frac{1}{2^{2n}36} < \frac{1}{2^{2n}36}
\end{eqnarray*}
where we used property (ii) from Definition \ref{RIS}, Remark \ref{inf norm vector} and that $k_0$ is in $A$, i.e. $n_{k_0}\leqslant w(g)$. The result follows from the above, \eqref{chickpeas are my favourite} and \eqref{black eyed peas are great}.
\end{proof}

\begin{lem}\label{needed estimate}
Let $1\leqslant C \leqslant 10/7$, $\theta>0$, $\{(f_k,x_k)\}_{k=1}^\ell$ be a $(C,\theta)$-dependent sequence and $1\leqslant n\leqslant m \leqslant \ell$ be natural numbers. Let also $(g_j)_{j=1}^d$ be a sequence of weighted functionals in $\WT$  and $(\e_j)_{j=1}^d$ be a sequence of signs in $\{-1,1\}$, so that one of the following is satisfied:
\begin{itemize}

\item[(i)] the sequence $(g_j)_{j=1}^d$ is comparable and the signs $(\e_j)_{j=1}^d$ are alternating or

\item[(ii)] the sequence $(g_j)_{j=1}^d$ is either incomparable or irrelevant.

\end{itemize}
If for $j=1,\ldots,d$ we define $D_j = \{n\leqslant k\leqslant m:\;w(g_j) < w(f_k)\}$, then
\begin{equation*}
\left|\sum_{j=1}^d\e_jg_j\left(\sum_{k=n}^mx_k\right) - \sum_{j=1}^d\e_jg_j\left(\sum_{k\in D_j}^mx_k\right)\right| \leqslant 22C + d\frac{2C}{2^{w(f_n)}}.
\end{equation*}
\end{lem}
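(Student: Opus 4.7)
The plan is to switch the order of summation and split according to the relative sizes of $w(g_j)$ and $w(f_k)$. Writing $E_j=\{n,\ldots,m\}\setminus D_j=\{n\leqslant k\leqslant m:\,w(g_j)\geqslant w(f_k)\}$, the quantity to estimate equals $\sum_{k=n}^m\sum_{j:\,w(g_j)\geqslant w(f_k)}\e_jg_j(x_k)$. For each fixed $k$ I split this inner sum at the threshold $w(g_j)=2^{2w(f_k)}$: the ``large'' part is controlled by Lemma~\ref{large weights die} applied to the $(C,\theta,w(f_k))$-exact vector $x_k$, giving $\sum_{j:\,w(g_j)\geqslant 2^{2w(f_k)}}|g_j(x_k)|\leqslant dC/2^{w(f_k)}$; summing over $k=n,\ldots,m$ and using that $(w(f_k))_k$ is strictly increasing in $\N$ yields, by a geometric estimate, the $d\cdot 2C/2^{w(f_n)}$ term of the desired bound.

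For the remaining ``medium'' part $\{j:\,w(f_k)\leqslant w(g_j)<2^{2w(f_k)}\}$, the fact that $w(f_k)\in L$ (since the weights of a special sequence lie in $L$) combined with Remark~\ref{compensation for not being a mixed tsirelson space}(i) forces this set to be empty or a singleton for every $k$. I denote by $J$ the collection of $k$'s for which it is the singleton $\{j(k)\}$; then $\phi(w(g_{j(k)}))=w(f_k)$ and $k\mapsto j(k)$ is strictly increasing. A crucial tree-agreement identity then follows from the injectivity of $\sigma$: if $k\in J$ and $k<\max J$, then the pair $(F_k,X_k)$ in any $\T$-sequence $\{(F_i,X_i)\}$ witnessing the (in)comparability or irrelevance of $(g_j)_j$ satisfies $(F_k,X_k)=(f_k,x_k)$. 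Indeed, choosing any $k'\in J$ with $k'>k$, the equality $w(F_{k_{j(k')}})=\phi(w(g_{j(k')}))=w(f_{k'})$ forces the length-$(k'-1)$ prefixes of $\{(F_i,X_i)\}$ and $\{(f_i,x_i)\}$ to coincide, which covers position $k$.

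It remains to bound $\sum_{k\in J}\e_{j(k)}g_{j(k)}(x_k)$ by $22C$, and I handle the three configurations separately. In the incomparable sub-case of (ii), two distinct elements of $J$ would force $\phi(w(g_{j(k)}))$ and $\phi(w(g_{j(k')}))$ to be $\sigma$-images of nested initial segments of $\{(f_i,x_i)\}_{i=1}^\ell\in\T$, hence tree-comparable, contradicting Definition~\ref{incomparable naturals}; thus $|J|\leqslant 1$ and the contribution is at most $\|x_k\|<7C$ by Remark~\ref{exact vector norm}. In the irrelevant sub-case, the tree agreement gives $g_{j(k)}(x_k)=g_{j(k)}(X_{k_{j(k)}})$ whenever $k<\max J$, and for $j(k)\in\{2,\ldots,d-1\}$ the irrelevance condition $|g_{j(k)}(X_{k_{j(k)}})|>10$ would contradict $|g_{j(k)}(x_k)|\leqslant\|x_k\|<7C\leqslant 10$ (using $C\leqslant 10/7$); hence $j(k)\in\{1,d\}$ for such $k$, forcing $|J|\leqslant 3$ by injectivity of $j(\cdot)$ and a total contribution of at most $3\cdot 7C=21C$.

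The comparable case (i) is the main obstacle. After isolating the at-most-three boundary contributions (indices with $j(k)\in\{1,d\}$ or $k=\max J$, each bounded by $7C$), one is left with a middle sum $\sum_k\e_{j(k)}g_{j(k)}(X_{k_{j(k)}})$ taken over $k\in J\setminus\{\max J\}$ with $j(k)\in\{2,\ldots,d-1\}$. Here the comparable conditions supply the bound $|g_{j(k)}(X_{k_{j(k)}})|\leqslant 10$ together with the exponentially decaying pairwise near-equality $|g_i(X_{k_i})-g_j(X_{k_j})|<1/2^i$. The plan is to exploit the alternating signs $\e_j=(-1)^{j+1}$ by pairing consecutive indices of the increasing sequence $\{j(k):k\in J\}$, so that adjacent pairs collapse into errors of size at most $1/2^{\min}$ whose total is geometric; the delicate point is that restricting to the subset $\{j(k):k\in J\}\subset\{2,\ldots,d-1\}$ could in principle introduce mis-matched signs, so the actual execution will require a summation-by-parts style reorganization, ultimately delivering a uniform $O(C)$ bound that, together with the boundary contributions $\leqslant 21C$, produces the total $22C$ estimate.
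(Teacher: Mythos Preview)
Your decomposition and the incomparable and irrelevant cases are handled essentially as in the paper, and your tree-agreement identity (forcing $k_{j(k)}'=k$ and $(F_i,X_i)=(f_i,x_i)$ for $i<\max J$) is exactly the right structural input. The genuine gap is in the comparable case. You correctly identify the danger: the alternating signs $\e_j=(-1)^{j+1}$ are alternating in $j$, not in $k$, so if $\{j(k):k\in J\}$ were an arbitrary subset of $\{1,\dots,d\}$ the restricted signs $\e_{j(k)}$ need not alternate. Your proposed fix, a ``summation-by-parts style reorganization'', cannot work here: the comparable condition only says that all the values $g_{j(k)}(x_k)$ are nearly equal to a common $\rho$ with $|\rho|\leqslant 10$, so if the signs $\e_{j(k)}$ happened to all agree the sum would be of order $|J|\cdot 10$, and no Abel/summation-by-parts identity can rescue a bound independent of $|J|$.

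What is actually needed, and what the paper proves, is the structural fact that $\{j(k):k\in J\}$ is an \emph{interval} of $\{1,\dots,d\}$ (in fact an initial one), so the signs $\e_{j(k)}$ genuinely alternate in $k$ and the pairing telescopes. This follows from your own tree-agreement step: if $j(k_1)<t<j(k_2)$ for consecutive $k_1<k_2$ in $J$, then $k_t'<k_{j(k_2)}'=k_2\leqslant\max J$, so the prefix equality gives $F_{k_t'}=f_{k_t'}$ and hence $\phi(w(g_t))=w(f_{k_t'})$; since $k_1<k_t'<k_2$ lies in $\{n,\dots,m\}$ this forces $k_t'\in J$, contradicting consecutiveness. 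Once you have this interval property, pair consecutive indices in $J$ (after stripping at most three endpoints contributing $\leqslant 21C$) and use $|g_i(x_{k_i})-g_{i+1}(x_{k_{i+1}})|<1/2^i$ to bound the remainder by $\sum_i 1/2^i<1\leqslant C$, giving $22C$. Replace the summation-by-parts remark with this argument and the proof is complete.
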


\begin{proof}
Recall that each $x_k$ is a $(C,\theta,w(f_k))$-exact vector and for all $1\leqslant k\leqslant \ell$ define $A_k = I_0(x_k,(g_j)_{j=1}^d)$ and $B_k = I_1(x_k,(g_j)_{j=1}^d)$. Observe that
\begin{equation*}
\sum_{j=1}^d\e_jg_j\left(\sum_{k\in D_j}^mx_k\right) = \sum_{k=1}^m\sum_{j\in B_k}\e_jg_j(x_k).
\end{equation*}
Therefore, if we define $C_k = I_2(x_k,(g_j)_{j=1}^d)$ we conclude
\begin{gather*}
\left|\sum_{j=1}^d\e_jg_j\left(\sum_{k=n}^mx_k\right) - \sum_{j=1}^d\e_jg_j\left(\sum_{k\in D_j}^mx_k\right)\right| =\\
\begin{split}
\left| \sum_{k=n}^m\left(\sum_{j\in A_k}\e_jg_j(x_k) + \sum_{j\in C_k}\e_jg_j(x_k)\right)\right| &\leqslant \left| \sum_{k=n}^m\sum_{j\in A_k}\e_jg_j(x_k)\right| + \sum_{k=n}^md\frac{C}{2^{w(f_k)}}\\
&\leqslant \left| \sum_{k=n}^m\sum_{j\in A_k}\e_jg_j(x_k)\right| + d\frac{2C}{2^{w(f_n)}}
\end{split}
\end{gather*}
where the first inequality follows from Lemma \ref{large weights die} while the second one follows from the fact that the $w(f_j)$'s are strictly increasing (see Remark \ref{special increasing weights}).

We will show that $|\sum_{k=1}^m\sum_{j\in A_k}\e_jg_j(x_k)| \leqslant 22C$, which will conclude the proof. We remind that by Remark \ref{exact vector norm}, $\|x_k\| < 7C$ for all $1\leqslant k\leqslant \ell$. We also remind that by Remark \ref{compensation for not being a mixed tsirelson space} each set $A_k$ is either empty or a singleton and in particular, we note the following: if $j\in A_k$ then $\phi(w(g_j)) = w(f_k)$. Moreover, the assumptions yield the $\phi(w(g_j))$'s are strictly increasing. If the sets $A_k$ are all empty there is nothing to prove. Otherwise, let $k_1 < \cdots <k_s$ be all the $k$'s in $\{1,\ldots,\ell\}$ satisfying $A_{k_i} \neq \varnothing$. Let also $1\leqslant j_1 < \cdots < j_s\leqslant d$ be so that for each $i$, $j_i$ is the unique element of $A_{k_i}$, and hence $\phi(w(g_{j_i})) = w(f_{k_i})$  for $i=1,\ldots,s$.

If $s \leqslant 2$ then the desired estimate follows from $\|x_k\| < 7C$ for all $1\leqslant k\leqslant \ell$. Otherwise, $s\geqslant 3$ which implies that the sequence $(g_i)_{i=1}^d$ is not incomparable, i.e. there are $1\leqslant i < i'\leqslant d$ so that $w(g_{j_2})$ and $w(g_{j_3})$ are not incomparable in the sense of Definition \ref{incomparable naturals}. Indeed, since $\{(f_k,x_k)\}_{k=1}^m$ is in $\T$ we have that
\begin{equation*}
\begin{split}
\sigma^{-1}(\phi(w(g_{j_2}))) &= \sigma^{-1}(w(f_{k_2})) = \{(f_k,x_k)\}_{k=1}^{k_2 - 1} \sqsubseteq \{(f_k,x_k)\}_{k=1}^{k_3 - 1}\\ &= \sigma^{-1}(w(f_{k_3})) = \sigma^{-1}(\phi(w(g_{j_3})))
\end{split}
\end{equation*}
which means that $w(g_{j_2})$ and $w(g_{j_3})$ are comparable.

We conclude that the sequence $(g_j)_{j=1}^d$ is either comparable, or irrelevant and therefore there exists $m'\inn$ with $d\leqslant m'$, natural numbers $1\leqslant k_1' <\cdots < k_d'\leqslant m'$ and $\{(h_k, y_k)\}_{k=1}^{m'}$ in $\T$, so that $\phi(w(g_j)) = w(h_{k_j'})$ for $j=1,\ldots,d$. Observe the following:
\begin{equation}\label{well this is practically the whole reason you have the coding function}
\{(h_k, y_k)\}_{k=1}^{k_{j_s}'-1} = \sigma^{-1}(\phi(w(g_{j_s}))) = \sigma^{-1}(w(f_{k_s})) = \{(f_k,x_k)\}_{k=1}^{k_s - 1}.
\end{equation}
The above implies that $\{j_1,\ldots,j_s\}$ is an initial interval of $\{1,\ldots,d\}$, in particular:
\begin{itemize}

\item[(a)] $j_i = i$ for $i=1,\ldots,s$ and

\item[(b)] $k_i' = k_i$ for $i=1,\ldots,s$.

\end{itemize}
Indeed, if $1\leqslant t <j_s$ then $\phi(w(g_{t})) = w(h_{k_{t}'}) = w(f_{k_{t}'})$ and hence $j\in A_{k_t'}$. This yields that there is $1\leqslant i <s$ so that $t = j_i$ and $k_{i} = k_t'$. A simple cardinality argument yields that $\{j_1,\ldots,j_s\} = \{1,\ldots,s\}$ and for $1\leqslant i <s$ $k_i' = k_i$. Also, since $j_s = s$, \eqref{well this is practically the whole reason you have the coding function} clearly yields that $k_s = k_s'$.

Observe that the sequence $(g_j)_{j=1}^d$ is not irrelevant. Indeed, the opposite would imply that $10 < |g_{2}(y_{k_{2}'})| = |g_{2}(x_{k_2})| \leqslant 7C \leqslant 10$, a contradiction.

%It remains to check the case in which the sequence $(g_j)_{j=1}^d$ is either pseudo-comparable, or comparable.  If we assume that $(g_j)_{j=1}^d$ is pseudo-comparable then the definition of pseudo-comparable sequences and (b) yield that $g_{i}(x_{k_i}) = 0$, for $i=2,\dots,s-1$ and hence also for $i\in E\setminus\{\max E,\min E\}$. Since $\|x_k\| < 7C$ for all $k\inn$,  we obtain
%\begin{equation*}
%\left| \sum_{k=n}^m\sum_{j\in A_k}\e_jg_j(x_k)\right| = \left|\sum_{i\in E}\e_ig_i(x_{k_i})\right| \leqslant 14C.
%\end{equation*}

In the last remaining case, the sequence $(g_j)_{j=1}^d$ is comparable. Define $E = \{i:\;k_i\in\{n,\ldots,m\}\}$, observe that $E$ is an interval of $\{1,\ldots,s\}$ and choose successive two-point intervals $E_1,\ldots,E_p$ of $E\setminus\{\max E,\min E\}$, so that $E\setminus\cup_{i=1}^p{E_i}$ has at most three elements. The fact that the sequence $(g_j)_{j=1}^d$ is comparable and (b) yield that $|g_i(x_{k_i}) - g_j(x_{k_j})| < 1/2^i$ for all $2\leqslant i < j \leqslant s-1$ and therefore, since the signs $(\e_i)_{i=1}^d$ are alternating, if for each $i$ we write $E_i = \{r_i,r_i+1\}$ then we obtain
\begin{equation*}
\left|\sum_{j\in E_i}\e_jg_j(x_{k_j})\right| = \left|g_r({x_{k_{r_i}}}) - g_{r+1}({x_{k_{{r_i}+1}}})\right| < \frac{1}{2^{r_i}} \leqslant \frac{1}{2^i}
\end{equation*}
for $i=1,\ldots,p$ and hence
\begin{equation*}
\left| \sum_{k=n}^m\sum_{j\in A_k}\e_jg_j(x_k)\right| = \left|\sum_{i\in E}\e_ig_i(x_{k_i})\right| \leqslant 21C + \sum_{i=1}^p\left|\sum_{j\in E_i}\e_ig_i(x_{k_i})\right| \leqslant 22C.
\end{equation*}
\end{proof}

The result below is the main one of this section and it is used later to prove the main properties of the space $\X$ and its operators.

\begin{prp}\label{dependent sum bounded}
Let $1\leqslant C \leqslant 10/7$, $\{(f_k,x_k)\}_{k=1}^\ell$ be a $(C,\theta)$-dependent sequence and $f$ be a weighted functional in $\WT$. If for some natural numbers $1\leqslant n\leqslant m \leqslant \ell$ we set $D = \{k\in\{n,\ldots,m\}:\; w(f) < w(f_k)\}$, then:
\begin{equation*}
\left|f\left(\sum_{k\in D}x_k\right)\right| \leqslant \frac{47C}{2^{w(f)}}.
\end{equation*}
In particular, for every natural numbers $1\leqslant n\leqslant m \leqslant \ell$, $\|\sum_{k=n}^mx_k\| \leqslant 24C$.
\end{prp}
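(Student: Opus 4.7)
Write $f = (1/2^j)\sum_{q=1}^d\al_q$ with $j = w(f)$ and $(\al_q)_q$ a very fast growing, $\mathcal{S}_j$-admissible sequence of \ac-averages of $\WT$. By Remark~\ref{special increasing weights} the weights $w(f_k)$ are strictly increasing in $k$, so $D$ is automatically a terminal segment of $\{n,\ldots,m\}$; replacing $n$ by $\min D$, I may assume $D = \{n,\ldots,m\}$ with $w(f_k) > j$ for every $k \in D$. The target reduces to showing $|\sum_q \al_q(\sum_{k=n}^m x_k)| \leqslant 47C$.

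\textbf{Applying Lemma~\ref{needed estimate} to each average.} For basic $\al_q$, the contribution is negligible via $\|x_k\|_\infty < 1/(36\cdot 2^{2w(f_k)})$ (Remark~\ref{inf norm vector}). For each non-basic $\al_q = (1/n_q)\sum_{i=1}^{d_q}\varepsilon_{q,i}g_{q,i}$ (of type \ic, \ir, or \co), Lemma~\ref{needed estimate} yields
\[
\Bigl|\al_q\Bigl(\sum_{k=n}^m x_k\Bigr) - \frac{1}{n_q}\sum_{i=1}^{d_q}\varepsilon_{q,i}g_{q,i}\Bigl(\sum_{k \in D_{q,i}} x_k\Bigr)\Bigr| \leqslant \frac{22C}{n_q} + \frac{2Cd_q}{n_q\, 2^{w(f_n)}},
\]
where $D_{q,i} = \{k \in D : w(g_{q,i}) < w(f_k)\}$. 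The very-fast-growing condition $n_q = s(\al_q) > 2^{\max\supp\al_{q-1}}$ forces $\sum_q 1/n_q$ to be bounded by a small constant, so the $22C/n_q$ terms sum to at most roughly $44C$; the tail $\sum_q 2Cd_q/(n_q\,2^{w(f_n)}) \leqslant 2Cd/2^{w(f_n)}$ is small since $w(f_n) > j \geqslant 1$ and $w(f_n) \geqslant \min L \geqslant 8$.

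\textbf{Main term, main obstacle, and norm bound.} The heart of the proof is controlling the remaining ``main sum'' $\sum_q (1/n_q)\sum_i \varepsilon_{q,i} g_{q,i}(\sum_{k \in D_{q,i}} x_k)$. For each $k \in D_{q,i}$, Proposition~\ref{vectors are ris} gives the per-pair bound $|g_{q,i}(x_k)| < 7C/2^{w(g_{q,i})}$. The crucial combinatorial step is to regroup by $k$: for fixed $k$, the $g_{q,i}$'s whose range lies entirely inside $\ran x_k$ inherit the very-fast-growing and $\mathcal{S}_j$-admissible structure from $(\al_q)$, so Lemma~\ref{a very important estimate} applied to the $(C,\theta,w(f_k))$-exact vector $x_k$ bounds their aggregate absolute contribution by $6C/s + 1/2^{w(f_k)}$, where $s$ is the size of the first such average. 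For $k$ beyond $\min D$, the very-fast-growing property gives $s \geqslant 2^{\max\supp x_{k-1}}$, making the sum over $k$ telescope to a small total. The few ``straddling'' functionals whose range crosses an $\ran x_k$-boundary are controlled individually by Proposition~\ref{vectors are ris}. The main obstacle is precisely this classification-and-telescoping step: one must show that the very-fast-growing property cleanly propagates through the sub-family restrictions and that straddling functionals contribute only $O(C)$ in total, not proportional to $|D|$. Combining everything yields $|\sum_q \al_q(\sum_{k=n}^m x_k)| \leqslant 47C$, proving the main inequality. Finally, the norm bound $\|\sum_{k=n}^m x_k\| \leqslant 24C$ follows by testing against an arbitrary $g \in \WT$: the cases $g = \pm e_i^*$ and $g$ a basic average are trivial, and for $g$ a weighted functional or more general \ac-average, decompose $\sum_{k=n}^m x_k$ into the pieces indexed by $D(g)$ and its complement — the first piece contributes at most $47C/2^{w(g)} \leqslant 23.5C$ by the main estimate, and the second is controlled via the strictly increasing weights $w(f_k)$ together with the coding inequality $w(f_{k+1}) > \|f_k\|_\infty^{-1}\max\supp x_k$.
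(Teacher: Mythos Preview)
Your proposal has a genuine gap in the treatment of the ``main term'' $\sum_q(1/n_q)\sum_i\varepsilon_{q,i}g_{q,i}\bigl(\sum_{k\in D_{q,i}}x_k\bigr)$. You try to control it by regrouping in $k$ and invoking Lemma~\ref{a very important estimate}, but that lemma applies to very fast growing $\mathcal{S}_j$-admissible sequences of \ac-averages acting on a \emph{single} exact vector, whereas the $g_{q,i}$'s are weighted functionals (not \ac-averages) and the quantity you need to bound is $g_{q,i}$ acting on a \emph{sum} $\sum_{k\in D_{q,i}}x_k$ whose length is not a priori controlled. The per-pair estimate $|g_{q,i}(x_k)|<7C/2^{w(g_{q,i})}$ from Proposition~\ref{vectors are ris} only yields $|g_{q,i}(\sum_{k\in D_{q,i}}x_k)|\leqslant |D_{q,i}|\cdot 7C/2^{w(g_{q,i})}$, and nothing prevents $|D_{q,i}|$ from being large; your telescoping sketch does not close this hole because the restricted families of $g_{q,i}$'s simply do not form a very fast growing family of \ac-averages.

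The missing idea is \emph{induction on the complexity level} in the stratification $\WT=\cup_pW_p$ (Remark~\ref{basics of norming}(iii)). The paper proves the estimate for all weighted $f\in W_p$ by induction on $p$: for $f\in W_{p+1}$ one writes each $\al_q=(1/d)\sum_i\varepsilon_ig_i$ with $g_i\in W_p$, applies Lemma~\ref{needed estimate} exactly as you do, and then disposes of the main term \emph{directly via the inductive hypothesis}, obtaining $|g_i(\sum_{k\in D_i}x_k)|\leqslant 47C/2^{w(g_i)}$; summing over $i$ gives the key intermediate bound $|\al(\sum_{k=n}^mx_k)|\leqslant 23C/s(\al)+2C/2^{w(f_n)}$. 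Only after this is secured does the paper introduce the $M_k/N_k$ decomposition (large-size averages handled by Lemma~\ref{a very important estimate}, small-size averages shown to act on disjoint intervals $D_q$ and then estimated by the intermediate bound). Without the inductive step there is no uniform control of $g_{q,i}$ on the variable-length sums $\sum_{k\in D_{q,i}}x_k$, and the argument cannot close.
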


\begin{proof}
We first assume that the first statement holds to prove the fact that for $1\leqslant n\leqslant m\leqslant \ell$, $\|\sum_{k=n}^mx_k\| \leqslant 24C$. Let $f\in \WT$. We may assume that $f$ is either an element of the basis, or a weighted functional. In the first case, $|f(\sum_{k=n}^mx_k)| \leqslant \max\{\|x_k\|_\infty:\; n\leqslant k \leqslant m\} < 24C$ by Remark \ref{inf norm vector}. If on the other hand  $f$ is a weighted functional, we distinguish three cases regarding the weight of $f$. If $w(f) < w(f_n)$, then the first statement yields that $|f(\sum_{k=n}^mx_k)| < 47C/2^{w(f)} < 24C$. If there is $n\leqslant k_0 < m$ with $w(f_{k_0}) \leqslant w(f) < w(f_{k_0+1})$, then as before we obtain that $|f(\sum_{k>k_0}x_k)| \leqslant 47C/2^{w(f)} \leqslant 47C/2^{w(f_{k_0})} < C$ (recall that $w(f_{k_0})\in L$ and $\min L\geqslant 8$). Also, by Remark \ref{estimate on scc}, $|f(x_{k_0})| \leqslant 7C$ while \eqref{coding} and Remark \ref{inf norm vector} yield that $|f(\sum_{k<k_0}x_k)| < C$. We obtain that $|f(\sum_{k=n}^mx_k)| < 9C$. In the last case we have $w(f) \geqslant w(f_m)$, where by using similar arguments we obtain $|f(\sum_{k=n}^mx_k)| < 8C$.

We now proceed to prove the first statement, for which we will use the third statement of Remark \ref{basics of norming}. In particular, by induction on $p$, where $\WT = \cup_pW_p$, we shall prove that for every weighted functional $f$ in $W_p$ and natural numbers $1\leqslant n\leqslant m\leqslant \ell$, if $D = \{k\in\{n,\ldots,m\}:\; w(f) < w(f_k)\}$ then $|f(\sum_{k\in D}x_k)| \leqslant 24C/2^{w(f)}$.

The set $W_0 = \{\pm e_i^*:\;i\inn\}$ does not contain any weighted functionals and so the statement for $p=0$ trivially holds. Let $p\inn$ such that every weighted functional in $W_p$ satisfies the conclusion. Before showing that this property is satisfied by functionals in $W_{p+1}$, we remark the following: let $\al$ be an \ac-average of $W_p$ and $n\leqslant m$, then
\begin{equation}\label{raspberry}
\left|\al\left(\sum_{k=n}^mx_k\right)\right| \leqslant \frac{23C}{s(\al)} + \frac{2C}{2^{w(f_n)}}.
\end{equation}
Indeed, if $\al$ is a basic average, then
$$\left|\al\left(\sum_{k=n}^mx_k\right)\right| \leqslant \max_{n\leqslant k\leqslant m}\|x_k\|_\infty \leqslant \frac{1}{2^{w(f_n)}}$$
where the last inequality follows from Remark \ref{inf norm vector}. If $\al$ is not a basic average, then there are natural numbers $s\leqslant d$ and weighted functionals $g_1 < \cdots < g_s$ in $W_p$, so that $\al = (1/d)\sum_{i=1}^sg_i$ (or $\al = (1/d)\sum_{i=1}^s\e_ig_i$ with the $\e_i$'s being alternating signs). We define $D_j = \{k\in\{n,\ldots,m\}: w(g_j) < w(f_k)$ and by Lemma \ref{needed estimate} we obtain:
\begin{eqnarray}\label{you must think i am fat}
\left|\al\left(\sum_{k=n}^mx_k\right)\right| &\leqslant & \frac{1}{d}\sum_{j=1}^d\left|g_j\left(\sum_{k\in D_j}x_k\right)\right| + \frac{22C}{d} + \frac{2C}{2^{w(f_n)}}
\end{eqnarray}
The inductive assumption yields
\begin{equation*}
\sum_{j=1}^d\left|g_j\left(\sum_{k\in D_j}x_k\right)\right| \leqslant \sum_{j=1}^d\frac{47C}{2^{w(g_j)}} \leqslant \sum_{j=1}^d\frac{47C}{2^{\phi(w(g_j))}} \leqslant C
\end{equation*}
where we used the fact that, in order to define an \ac-average, the $\phi(w(g_j))$'s must be strictly increasing elements of $L$ and $\min L \geqslant 8$. Combining \eqref{you must think i am fat} with the above, \eqref{raspberry} follows.

Let now $f = (1/2^j)\sum_{q=1}^d\al_q$ be a weighted functional in $W_{p+1}$, with $(\al_q)_{q=1}^d$ a very fast growing and $\Sj$-admissible sequence of \ac-averages of $W_p$, and let also $1\leqslant n\leqslant m\leqslant \ell$ be natural numbers. Define $D = \{k\in\{n,\ldots,m\}:\;j < w(f_k)\}$ and also for $k\in D$ set
\begin{eqnarray*}
M_k &=& \{q:\;\ran \al_q\cap \ran x_k\neq\varnothing\}\;\text{and}\\
N_k &=& \left\{q\in M_k:\; s(\al_q) > 8C2^{2w(f_k)}\right\}.
\end{eqnarray*}
Lemma \ref{a very important estimate} yields that for $k\in D$,
\begin{equation*}
\sum_{q\in N_k}|\al_q(x_k)| < \frac{2}{2^{w(f_k)}}
\end{equation*}
and therefore:
\begin{equation}\label{apples}
\begin{split}
\left|\sum_{q=1}^d\al_q\left(\sum_{k\in D}x_k\right)\right| =& \left|\sum_{k\in D}\sum_{q\in M_k\setminus N_k}\al_q(x_k) + \sum_{k\in D}\sum_{q\in N_k}\al_q(x_k)\right|\\
\leqslant & \left|\sum_{k\in D}\sum_{q\in M_k\setminus N_k}\al_q(x_k)\right| + \sum_{k\in D}\frac{2}{2^{w(f_k)}}\\
\leqslant & \left|\sum_{k\in D}\sum_{q\in M_k\setminus N_k}\al_q(x_k)\right| + \frac{4}{2^{w(f_n)}}
\end{split}
\end{equation}
where we used that, according to Remark \ref{special increasing weights}, the $w(f_k)$'s are strictly increasing.

Define $A = \cup_{k\in D} M_k\setminus N_k$, for $q\in A$ set $D_q = \{k\in D:\;q\in M_k\setminus N_k\}$ and observe the following:
\begin{equation}\label{donut}
\left|\sum_{k\in D}\sum_{q\in M_k\setminus N_k}\al_q(x_k)\right| = \left|\sum_{q\in A}\al_q\left(\sum_{k\in D_q}x_k\right)\right|.
\end{equation}
We will show that the $D_q$'s are disjoint intervals of $\{n,\ldots,m\}$. Indeed, let $q\in A$ and $k_1$, $k_2\in D_q$. If $k_1<k<k_2$, we will show that $k\in D_q$. The fact that $q\in M_{k_1}\cap M_{k_2}$ means  that $\ran \al_q\cap \ran x_{k_1}\neq\varnothing$ and $\ran \al_q\cap \ran x_{k_2}\neq\varnothing$ which, of course, yields that $\ran \al_q\cap\ran x_k\neq\varnothing$, i.e. $q\in M_k$. Also, $q\in M_{k_1}\setminus N_{k_1}$ means that $s(\al_q) \leqslant 8C2^{2w(f_{k_1})} < 8C2^{2w(f_{k})}$, in other words $q\notin N_k$ and hence $k\in D_q$. We now show that the $D_q$'s are pairwise disjoint. Let $q_1 < q_2$ be in $A$ and assume that $k\in D_{q_1}\cap D_{q_2}$. By the fact that $\ran \al_{q_1}\cap \ran x_k\neq\varnothing$ and Definition \ref{def vector} we obtain
\begin{equation*}
8C2^{w(f_k)} \leqslant \min\supp x_k \leqslant \max\supp\al_{q_1}
\end{equation*}
and since the sequence $(\al_q)_{q=1}^d$ is very fast growing, we obtain that
\begin{equation*}
s(\al_{q_2}) > \max\supp \al_{q_1} \geqslant 8C2^{w(f_k)}
\end{equation*}
which means that $q_2\in N_k$, which contradicts $k\in D_{q_2}$.

If we set $n_q = \min D_q$, then the $n_q$'s are strictly increasing and since the $D_q$'s are intervals, by \eqref{raspberry}
\begin{equation}\label{blueberry}
\left|\al_q\left(\sum_{k\in D_q}x_k\right)\right| \leqslant \frac{23C}{s(\al_q)} + \frac{2C}{2^{w(f_{n_q})}}
\end{equation}
for all $q\in A$. Combining \eqref{apples}, \eqref{donut} and \eqref{blueberry}:
\begin{equation*}
\left|\sum_{q=1}^d\al_q\left(\sum_{k\in D}x_k\right)\right| \leqslant \sum_{q\in A}\frac{23C}{s(\al_q)} + \sum_{q\in A}\frac{2C}{2^{w(f_{n_q})}} + \frac{4}{2^{w(f_n)}} < 47C
\end{equation*}
where we used that, as implied by the definition of very fast growing sequences, $\sum_q(1/s(\al_q)) < 2$, that the $w(f_{n_q})$'s are strictly increasing elements of $L$ and that $\min L\geqslant 8$. Finally, we conclude that $|f(\sum_{k\in D})x_k| < 47C/2^j$.
\end{proof}

\section{Non-trivial weakly Cauchy sequences and the HI property of the space $\X$}\label{section structure}
In this section we prove that in every block subspace of $\X$ one can find a seminormalized block sequence $(x_k)_k$ and a sequence of weighted functionals $(f_k)_k$ so that $\{(f_k,x_k)\}_k$ forms a maximal chain in $\T$. We conclude that $\X$ is hereditarily indecomposable. We also show that in the case $\T$ is well founded, then the space $\X$ reflexive. On the other hand, if $\T = \mathcal{U}$, then we show that $\mathfrak{X}_{\mathcal{U}}$ contains no reflexive subspace.

\begin{lem}\label{you can always find large enough averages of large norm}
Let $(f_k)_k$ be an infinite sequence of non-averages in $\WT$ so that for each $n\inn$ the set of all $k$'s, so that $f_k$ is a weighted functional of weight $w(f_k) = n$, is finite. Then there exists a subsequence of $(f_k)_k$, again denoted by $(f_k)_k$, so that for every natural numbers $k_1 < \cdots < k_d$ and alternating signs $(\e_i)_{i=1}^d$ in $\{-1,1\}$, the functional $\al = (1/d)\sum_{i=1}^d\e_if_{k_i}$ is an \ac-average in $\WT$.
\end{lem}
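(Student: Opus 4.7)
The plan is to extract the desired subsequence via a nested sequence of refinements, producing one of four kinds of \ac-averages (basic, \ic, \co, or \ir) depending on which case we land in. First, if infinitely many $f_k$ are basis elements, we pass to that subsequence; any $(1/d)\sum\e_i f_{k_i}$ is then a basic average. Otherwise, after discarding finitely many terms, every $f_k$ is a weighted functional, and by the hypothesis on weights we can pass to a subsequence with $w(f_k)\to\infty$, and in particular with all $w(f_k)\geq \min L$. Since $\phi$ is non-decreasing with $\lim_i\phi(i)=\infty$, a further subsequence makes $(\phi(w(f_k)))_k$ strictly increasing; a routine extraction also produces successive supports.

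Next I apply the dichotomy $\phi(w(f_k))\in L_0$ vs.\ $\phi(w(f_k))\in L_1$, and pass to a subsequence falling in one case. In the $L_0$ case, the strictly increasing $\phi$-values are automatically pairwise distinct elements of $L_0$, so by Definition~\ref{incomparable naturals}(i) the weights $w(f_k)$ are pairwise incomparable; for any choice $k_1<\cdots<k_d$ and signs $(\e_i)$, the sequence $(\e_i f_{k_i})$ remains incomparable (the weight is sign-invariant) and $(1/d)\sum\e_i f_{k_i}$ is an \ic-average. In the $L_1$ case, set $t_k=\sigma^{-1}(\phi(w(f_k)))\in\T$. A standard chain/antichain Ramsey argument on the partial order of $\T$ lets us pass to a subsequence where $(t_k)$ is either an antichain or a chain. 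The antichain case, by Definition~\ref{incomparable naturals}(ii), again yields incomparable weights and hence \ic-averages.

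The chain case $t_1\sqsubset t_2\sqsubset\cdots$ produces an infinite branch of $\T$, which forces $\T=\mathcal{U}$ (the only ill-founded admissible tree). Let $\{(h_j,y_j)\}_{j=1}^\infty=\bigcup_k t_k$ with $|t_k|=m_k$; by the definition of special sequences, $w(h_{m_k+1})=\sigma(t_k)=\phi(w(f_k))$. Consider the scalars $a_k=f_k(y_{m_k+1})$. A final dichotomy isolates a subsequence with either $|a_k|>10$ for all $k$, or $|a_k|\leq 10$ for all $k$. In the first subcase, taking the initial segment $\{(h_j,y_j)\}_{j=1}^{m_{k_d}+1}$ of the branch (which lies in $\mathcal{U}=\T$) with positions $l_i=m_{k_i}+1$ verifies the conditions of Definition~\ref{def averages}(iii) for $(\e_i f_{k_i})$, so $(1/d)\sum\e_i f_{k_i}$ is an \ir-average. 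In the second subcase, $(a_k)$ is bounded, so by Bolzano--Weierstrass and a further refinement we may assume $|a_k-a|<1/2^{k+2}$ for some limit $a$ and all $k$. Then for any $k_1<\cdots<k_d$ from this final subsequence (so $k_i\geq i$), one has $|a_{k_i}-a_{k_j}|<2/2^{k_i+2}<1/2^i$ for $2\leq i<j\leq d-1$ and $|a_{k_i}|\leq 10$; with alternating $(\e_i)$, Definition~\ref{def averages}(ii) identifies $(1/d)\sum\e_i f_{k_i}$ as a \co-average.

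The main obstacle is the chain subcase. There we must (a) legitimately produce an infinite branch of $\mathcal{U}$ containing the weights of all the $f_k$ at canonically determined positions, (b) decide between the \co\ and \ir\ routes by a dichotomy on $|a_k|$, and (c) in the \co\ route arrange rapid Cauchy convergence so that the uniformity condition in Definition~\ref{def averages}(ii)(c) survives for every subsequent choice of $k_1<\cdots<k_d$. Everything else reduces to the standard Ramsey/tree extraction and to elementary sign-and-weight bookkeeping using the symmetry of $\WT$.
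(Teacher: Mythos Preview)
Your proposal is correct and follows essentially the same line as the paper's proof. The only organizational difference is that you first split on $\phi(w(f_k))\in L_0$ versus $L_1$ and then run a chain/antichain Ramsey argument in the $L_1$ case, whereas the paper applies Ramsey directly to the comparability relation of Definition~\ref{incomparable naturals} and afterwards observes that at most one term can have $\phi$-value in $L_0$; both routes land in the same four cases (basic, \ic, \ir, \co). Your remark that the chain case forces $\T=\mathcal{U}$ is correct but not needed for the argument---what is used is only that every initial segment of the branch lies in $\T$, which follows from downward closure.
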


\begin{proof}
By passing to a subsequence, either all $f_k$'s are weighted functionals, or they are all of the form $f_k = \e_k e_{i_k}^*$ where $\e_k\in\{-1,1\}$. Assume that the second case holds and recall that for al natural numbers $d\leqslant n$, $i_1 < \cdots <i_d$ and for any choice of signs $\e_j$, $j=1,\ldots,d$ the functional $\al = (1/n)\sum_{j=1}^d\e_je_{i_j}^*$ is an \ac-average. The result easily follows.

Assume now that the $f_k$'s are all weighted functionals. Then $\lim_kw(f_k) = \infty$ and so we may pass to a subsequence so that the sequence $\phi(w(f_k))$ is strictly increasing. By Ramsey's theorem \cite[Theorem A]{Ra}, by passing to a further subsequence, the $\phi(w(f_k))$'s are either all pairwise incomparable, or all pairwise comparable, in the sense of Definition \ref{incomparable naturals}. If the first one holds, then for any natural numbers $d\leqslant n$, $k_1 < \cdots <k_d$ and for any choice of signs $\e_j$, $j=1,\ldots,d$ the sequence of functionals $(\e_jf_{k_j})_{j=1}^d$ is incomparable, and hence $\al = (1/n)\sum_{j=1}^d\e_jf_{k_j}$ is an \ic-average. This easily implies the desired result.

We assume now that the $\phi(w(f_k))$'s are pairwise comparable in the sense of Definition \ref{incomparable naturals}. Observe first that for at most one $k\inn$ we have that $\phi(w(f_k))\in L_0$ and hence we may assume that $\phi(w(f_k))\in L_1$ for all $k\inn$. This further implies that $(\sigma^{-1}(\phi(w(f_k))))_k$ is a chain in $\T$ and hence,  there exist sequences $(h_i)_i$ in $\Wa$ and $(y_i)_i$ in $c_{00}(\N,\Q)$, so that $\{(h_i,y_i)\}_{i=1}^n$ is in $\T$ for all $n\inn$ and there is a strictly increasing sequence of natural numbers $(m_k)_k$, so that $w(h_{m_k}) = \phi(w(f_k))$ for all $k\inn$. By passing once more to a subsequence, we may assume that either $|f_k(y_{m_k})| > 10$ for all $k\inn$, or $|f_k(y_{m_k})| \leqslant 10$ for all $k\inn$. If the first one holds, then for any natural numbers $d\leqslant n$, $k_1 < \cdots <k_d$ and for any choice of signs $\e_j$, $j=1,\ldots,d$ the sequence of functionals $(\e_jf_{k_j})_{j=1}^d$ is irrelevant, and hence $\al = (1/n)\sum_{j=1}^d\e_jf_{k_j}$ is an \ir-average, which implies the desired result. Otherwise, we pass to an even further subsequence so that for every natural numbers $k < n$ we have that $|f_k(y_{m_k}) - f_n(y_{m_n})| < 1/2^k$. This means that for any natural numbers $d\leqslant n$, $k_1 < \cdots <k_d$ sequence of functionals $(f_{k_j})_{j=1}^d$ is comparable and therefore for alternating signs $(\e_j)_{j=1}^d$, $\al = (1/n)\sum_{j=1}^d\e_jf_{k_j}$ is a \co-average. The conclusion follows easily.
\end{proof}

If we assume that the tree $\T$ is well founded, then there does not exist a strictly increasing sequence of natural numbers which are pairwise comparable in the sense of Definition \ref{incomparable naturals}. In this case, the proof of Lemma \ref{you can always find large enough averages of large norm} yields the following.

\begin{lem}\label{you can always find large enough averages of large norm but in the case the tree is well founded then you can have them without alternating signs}
Assume that the tree $\T$ is well founded and let $(f_k)_k$ be an infinite sequence of non-averages in $\WT$ so that for each $n\inn$ the set of $k$'s, so that $f_k$ is a weighted functional of weight $w(f_k) = n$, is finite.  Then there exists a subsequence of $(f_k)_k$, again denoted by $(f_k)_k$, so that for every natural numbers $k_1 < \cdots < k_d$ the functional $\al = (1/d)\sum_{i=1}^df_{k_i}$ is an \ac-average in $\WT$.
\end{lem}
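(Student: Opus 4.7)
The plan is to mirror the proof of Lemma \ref{you can always find large enough averages of large norm} and to observe that under the well-foundedness of $\T$ the only sub-cases which can arise are those in which any average $(1/d)\sum_{i=1}^d f_{k_i}$, with unit signs, is already an \ac-average of $\WT$.

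First I would pass to a subsequence so that either every $f_k$ is of the form $\pm e_{i_k}^*$, in which case every $(1/d)\sum_{i=1}^d f_{k_i}$ is a basic average of $\WT$ by definition (basic averages permit arbitrary signs, in particular the constant sign $+1$), or every $f_k$ is a weighted functional. In the latter case the hypothesis on $(f_k)_k$ forces $\lim_k w(f_k) = \infty$, so a further subsequence makes $(\phi(w(f_k)))_k$ strictly increasing. By Ramsey's theorem, after a further subsequence either all pairs $\phi(w(f_k))$, $\phi(w(f_j))$ are pairwise incomparable in the sense of Definition \ref{incomparable naturals}, making every $(1/d)\sum_{i=1}^d f_{k_i}$ an \ic-average of $\WT$ and finishing the argument, or all pairs are pairwise comparable.

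The main obstacle is to rule out the comparable case. Exactly as in the proof of Lemma \ref{you can always find large enough averages of large norm}, at most one $\phi(w(f_k))$ can lie in $L_0$ (two distinct elements of $L_0$ are incomparable by clause (i) of Definition \ref{incomparable naturals}), so one may assume $\phi(w(f_k)) \in L_1$ for every $k$. Setting $t_k := \sigma^{-1}(\phi(w(f_k)))$, the nodes $(t_k)_k$ are pairwise comparable in $\T$ and, since $\sigma$ is injective and the $\phi(w(f_k))$'s are distinct, also pairwise distinct. In particular no two $t_k$'s can share a length, so their lengths form an infinite set of non-negative integers; a subsequence then yields $t_1 \sqsubsetneq t_2 \sqsubsetneq \cdots$, which is an infinite branch of $\T$ and contradicts well-foundedness. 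This infinite chain is precisely the object that was used in the previous lemma to produce the \co- and \ir-sub-cases, so both are automatically forbidden in the present setting, and the proof is complete.
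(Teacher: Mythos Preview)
Your proof is correct and follows essentially the same approach as the paper: the paper simply remarks that well-foundedness of $\T$ rules out any infinite strictly increasing sequence of pairwise comparable naturals, so that in the proof of Lemma \ref{you can always find large enough averages of large norm} only the basic-average and \ic-average sub-cases survive. Your write-up unpacks this observation in slightly more detail, but the argument is the same.
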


\begin{lem}\label{on bounded sums index is zero}
Let $(x_k)_k$ be a block sequence in $\X$ and assume that there is a constant $C>0$ so that $\|\sum_{k=1}^\ell x_k\| \leqslant C$ for all $\ell\inn$. Then $\adxk = 0$.
\end{lem}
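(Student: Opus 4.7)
The plan is by contradiction: assuming $\adxk>0$, one constructs a single weighted functional $f\in\WT$ whose value on a partial sum $S_{k_M}=\sum_{k=1}^{k_M}x_k$ grows linearly in a free parameter $M$, which together with $|f(S_{k_M})|\leqslant \|S_{k_M}\|\leqslant C$ yields a contradiction once $M$ is large enough.

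By Definition~\ref{al index}, positivity of the $\alpha$-index yields a natural number $n$, a very fast growing sequence $(\al_q)_q$ of \ac-averages of $\WT$, an increasing sequence $(F_m)_m$ of finite subsets of $\N$ with $(\al_q)_{q\in F_m}$ being $\Sn$-admissible, a subsequence $(x_{k_m})_m$, and $\delta>0$ such that $\sum_{q\in F_m}|\al_q(x_{k_m})|\geqslant\delta$ for infinitely many $m$. Discarding those $q$'s whose ranges avoid $\ran x_{k_m}$ and restricting each remaining $\al_q$ to $\ran x_{k_m}$, Remarks~\ref{restrictions are in} and~\ref{basics of norming} produce \ac-averages $\bar\al_q^{(m)}\in\WT$ supported in $\ran x_{k_m}$ with $\sum_{q\in F_m}|\bar\al_q^{(m)}(x_{k_m})|\geqslant\delta$; the spreading and hereditary properties of $\Sn$ preserve admissibility under restriction and dropping, and the very-fast-growing property of the original $(\al_q)_q$ is preserved as well.

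Fix an arbitrary $M\in\N$ and choose, from the infinite set of $m$'s satisfying the lower bound, indices $m_1<\cdots<m_M$ with $\min\supp x_{k_{m_1}}\geqslant M$; this is possible because $\min\supp x_k\to\infty$. Concatenating the restricted families $(\bar\al_q^{(m_i)})_{q\in F_{m_i}}$ for $i=1,\ldots,M$ yields a sequence of \ac-averages of $\WT$, successive because the $\ran x_{k_{m_i}}$ are pairwise disjoint, very fast growing by the growth already present in $(\al_q)_q$, and $\mathcal{S}_{n+1}$-admissible via the identity $\Sn*\mathcal{S}_1=\mathcal{S}_{n+1}$ (the outer admissibility is witnessed by $M\leqslant\min\supp x_{k_{m_1}}\leqslant\min\supp \bar\al_q^{(m_i)}$ for all $i$ and $q$). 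Choosing signs $\e_q^{(m_i)}=\operatorname{sign}(\al_q(x_{k_{m_i}}))$, the functional
$$f=\frac{1}{2^{n+1}}\sum_{i=1}^{M}\sum_{q\in F_{m_i}}\e_q^{(m_i)}\bar\al_q^{(m_i)}$$
belongs to $\WT$ as a weighted functional of weight $n+1$.

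Since $\ran\bar\al_q^{(m_i)}\subset\ran x_{k_{m_i}}$ and $(x_k)$ is a block sequence, one has $f(x_k)=0$ whenever $k\notin\{k_{m_i}\}_{i=1}^M$, while $f(x_{k_{m_i}})=(1/2^{n+1})\sum_{q\in F_{m_i}}|\al_q(x_{k_{m_i}})|\geqslant\delta/2^{n+1}$. Therefore $f(S_{k_{m_M}})=\sum_{i=1}^M f(x_{k_{m_i}})\geqslant M\delta/2^{n+1}$, while $|f(S_{k_{m_M}})|\leqslant \|S_{k_{m_M}}\|\leqslant C$ forces $M\leqslant 2^{n+1}C/\delta$, a contradiction for $M$ chosen beyond this bound. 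The main subtlety of the argument is verifying that the concatenated family of restrictions is simultaneously very fast growing and $\mathcal{S}_{n+1}$-admissible, which is guaranteed by taking $m_1$ large enough so that $M\leqslant\min\supp x_{k_{m_1}}$ and by the block disjointness of $(\ran x_{k_{m_i}})_i$.
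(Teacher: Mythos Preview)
Your proof is correct and follows essentially the same approach as the paper's: assume $\adxk>0$, restrict the witnessing \ac-averages so that their ranges lie inside the $\ran x_{k_m}$'s, and concatenate enough groups to build a single weighted functional of weight $n+1$ whose action on a partial sum exceeds $C$. The paper's version is terser (it simply asserts one may assume $\ran\al_q\subset\ran x_{k_n}$ and drops the absolute values by implicitly replacing $\al_q$ with $\pm\al_q$), whereas you spell out the restriction step, the sign choices, and the verification that the concatenated family remains very fast growing and $\mathcal{S}_{n+1}$-admissible; the arguments are otherwise identical.
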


\begin{proof}
Assume that this is not the case. Then there exist $\e>0$, $m\inn$, a  very fast growing sequence of \ac-averages $(\al_q)_q$, a sequence of successive subsets $(F_n)_n$ of $\N$, with   $(\al_q)_{q\in F_n}$ $\Sm$-admissible for all $n\inn$ and a subsequence $(x_{k_n})_n$ of $(x_k)_k$ so that
\begin{equation*}\label{a burger should always have pickles}
\sum_{q\in F_n}\al_q(x_{k_n}) > \e.
\end{equation*}
We may also assume that $\ran \al_q\subset \ran x_{k_n}$ for all $q\in F_n$ and $n\inn$, hence:
\begin{equation*}\label{and mayonnaise}
\sum_{q\in F_n}\al_q(x_{k'}) = 0\;\text{for}\;k'\neq k_n.
\end{equation*}
Choose $n_0 > 2^{m+1}C/\e$ and observe that the functional
$$f = \frac{1}{2^{m+1}}\sum_{n=n_0}^{2n_0-1}\sum_{q\in F_n}\al_q$$
is a weighted functional in $\WT$ of weight $w(f) = m+1$. Also observe that by \eqref{a burger should always have pickles} and \eqref{and mayonnaise}
\begin{equation*}
C \geqslant \left\|\sum_{k=1}^{k_{2n_0 - 1}}x_k\right\| \geqslant f\left(\sum_{k=1}^{k_{2n_0 - 1}}x_k\right) > \frac{1}{2^{m+1}}n_0\e > C
\end{equation*}
which is absurd.
\end{proof}

\begin{lem}\label{if c0 then weights are unbounded}
Let $(x_k)_k$ be a seminormalized block sequence in $\X$ with $\adxk = 0$. Let also $(f_k)_k$ be a sequence of non-zero functionals in $\WT$, so that $f_k(x_k) \geqslant (3/4)\|x_k\|$ for all $k\inn$. Then for each $n\inn$ the set of $k$'s, so that $f_k$ is a weighted functional of weight $w(f_k) = n$, is finite.
\end{lem}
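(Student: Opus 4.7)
The plan is a proof by contradiction. Suppose that for some $n\inn$ the set of indices $k$ for which $f_k$ is a weighted functional of weight $n$ is infinite; passing to the corresponding subsequence, we may assume $w(f_k) = n$ for every $k$ and write $f_k = (1/2^n)\sum_q \al_q^{(k)}$ where $(\al_q^{(k)})_q$ is a very fast growing and $\Sn$-admissible sequence of \ac-averages in $\WT$. Put $\de = \inf_k\|x_k\|>0$ (available because $(x_k)$ is seminormalized) and fix $\e\in(0,\de/2)$.

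The heart of the argument is to isolate a single \ac-average carrying almost all of $f_k(x_k)$. Set
\[
Q_k = \{q : \ran\al_q^{(k)}\cap\ran x_k \neq \varnothing\}.
\]
Since the $\al_q^{(k)}$'s are successive and $\ran x_k$ is an interval, $Q_k$ is an interval of indices; let $q_1(k) = \min Q_k$. For any $q\in Q_k\setminus\{q_1(k)\}$ the preceding index $q-1$ still lies in $Q_k$, so $\max\supp\al_{q-1}^{(k)}\geqslant\min\supp x_k$ and the very fast growing condition yields $s(\al_q^{(k)})>2^{\min\supp x_k}$. Let $j$ be the parameter furnished by Proposition \ref{index char} for $\e$. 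For $k$ large enough that $2^{\min\supp x_k}\geqslant j$ and $k\geqslant k_n$, the tail $(\al_q^{(k)})_{q\in Q_k\setminus\{q_1(k)\}}$ is very fast growing, $\Sn$-admissible, and of sizes $\geqslant j$, so Proposition \ref{index char} gives
\[
\sum_{q\in Q_k\setminus\{q_1(k)\}}|\al_q^{(k)}(x_k)| < \e.
\]

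Combining this tail estimate with the standing hypothesis $f_k(x_k)\geqslant(3/4)\|x_k\|$ forces
\[
\al_{q_1(k)}^{(k)}(x_k)\geqslant (3\cdot 2^n/4)\|x_k\| - \e.
\]
Because $\WT$ is a symmetric norming set for $\X$, every functional in $\WT$ has dual norm at most $1$; in particular $|\al_{q_1(k)}^{(k)}(x_k)|\leqslant\|x_k\|$. The two bounds together yield $(3\cdot 2^n/4 - 1)\|x_k\|\leqslant\e$, and since $n\geqslant 1$ the coefficient is at least $1/2$, so $\e\geqslant\de/2$, contradicting the choice of $\e$.

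The main technical delicacy is to ensure that the truncated tail inherits the hypotheses of Proposition \ref{index char}; this reduces to verifying that $Q_k$ is an interval, so that $q-1\in Q_k$ for every $q$ in the tail and the very-fast-growing condition can be upgraded into the uniform size lower bound $s(\al_q^{(k)})\geqslant j$. Without this observation one could not separate $\al_{q_1(k)}^{(k)}$ from the rest and invoke the $\al$-index zero hypothesis to crush the tail.
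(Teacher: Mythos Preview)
Your proof is correct. It differs from the paper's argument only in packaging: the paper invokes Proposition~\ref{if al zero} directly, which (after passing to a subsequence and normalizing) furnishes the estimate $|f(x_k)| < (9/8)\|x_k\|/2^m$ for any weighted functional $f$ of fixed weight $m$, immediately contradicting $f_k(x_k)\geqslant(3/4)\|x_k\|$. What you do instead is go back to Proposition~\ref{index char} and reproduce inline the key step of the proof of Proposition~\ref{if al zero} (namely the Case~1 argument there): decompose $f_k$ into its constituent \ac-averages, isolate the first one whose range meets $\ran x_k$, and use the very fast growing condition together with $\adxk=0$ to crush the tail. The underlying idea is identical; the paper's route is shorter because the estimate has already been recorded, while yours is more self-contained and avoids the passage through the full $c_0$-spreading-model statement.
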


\begin{proof}
Assume that, passing to a subsequence, there is $m\inn$ so that $f_k$ is a weighted functional with $w(f_k) = m$ for all $k\inn$. Proposition \ref{if al zero} yields that, passing to a further subsequence, there is $k_0\inn$ so that $f_k(x_k) \leqslant (1/2^m)(8/9)\|x_k\| < (3/4)\|x_k\|$ for all $k\geqslant k_0$, which is absurd.
\end{proof}

Lemmas \ref{on bounded sums index is zero} and \ref{if c0 then weights are unbounded} immediately yield the following.

\begin{lem}\label{if well founded then something}
Let $(x_k)_k$ be a seminormalized block sequence in $\X$ and assume that there is a constant $C>0$ so that $\|\sum_{k=1}^\ell x_k\| \leqslant C$ for all $\ell\inn$. Let also $(f_k)_k$ be a sequence of non-averages in $\WT$, so that $f_k(x_k) > (3/4)\|x_k\|$ for all $k\inn$. Then for each $n\inn$ the set of $k$'s, so that $f_k$ is a weighted functional of weight $w(f_k) = n$, is finite.
\end{lem}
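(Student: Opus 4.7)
The plan is to derive the conclusion directly from the two preceding lemmas, since the hypotheses were engineered to chain them together. First I would invoke Lemma \ref{on bounded sums index is zero}: the hypothesis that $\|\sum_{k=1}^{\ell}x_k\| \leqslant C$ for all $\ell\inn$ is exactly what that lemma requires, so it produces $\adxk = 0$.

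Next I would check that the hypotheses of Lemma \ref{if c0 then weights are unbounded} are met. The sequence $(x_k)_k$ is seminormalized by assumption; its $\al$-index vanishes by the previous step; and the $f_k$'s are non-zero, since $f_k(x_k) > (3/4)\|x_k\| > 0$ (here I use seminormalization to ensure $\|x_k\| > 0$). The pointwise inequality $f_k(x_k) > (3/4)\|x_k\|$ strengthens the required $f_k(x_k) \geqslant (3/4)\|x_k\|$. Applying Lemma \ref{if c0 then weights are unbounded} then yields exactly the conclusion: for each $n\inn$ only finitely many $f_k$'s have weight $n$.

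Note that the hypothesis that each $f_k$ is a non-average plays no role in the argument itself; it is there presumably because this lemma is later combined with Lemma \ref{you can always find large enough averages of large norm but in the case the tree is well founded then you can have them without alternating signs} to extract an \ac-average subsequence from the $f_k$'s, which requires the non-average assumption as an input. The real content has already been packaged in Lemmas \ref{on bounded sums index is zero} and \ref{if c0 then weights are unbounded}, so there is no genuine obstacle; the proof is just the two-line composition.
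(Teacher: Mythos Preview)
Your proposal is correct and matches the paper's own proof: the paper simply states that Lemmas \ref{on bounded sums index is zero} and \ref{if c0 then weights are unbounded} immediately yield the result, exactly as you describe. Your observation that the non-average hypothesis is not needed for the argument itself (only for the downstream application) is also accurate.
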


We obtain the first result that depends on the properties of the tree $\T$.

\begin{prp}\label{if well founded then reflexive}
If the tree $\T$ is well founded, then the space $\X$ is reflexive.
\end{prp}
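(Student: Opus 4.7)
The plan is to combine Proposition \ref{shrinking}, which already shows that the basis of $\X$ is shrinking, with a proof that the basis is boundedly complete; shrinking plus boundedly complete implies reflexive. I argue the latter by contradiction. Assume the basis is not boundedly complete. By a standard extraction, this yields a seminormalized block sequence $(x_k)_k$ in $\X$ with $\inf_k\|x_k\| \geqslant \de > 0$ and $\sup_\ell\|\sum_{k=1}^\ell x_k\| \leqslant C$ for some constant $C$. The strategy is to manufacture a functional in $\WT$ whose value on some partial sum $\sum_{k=1}^\ell x_k$ exceeds $C$, contradicting $\|f\|_{\X^*} \leqslant 1$.

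For each $k$, normability together with Remark \ref{basics of norming}(ii) lets me choose $f_k \in \WT$ with $\ran f_k \subseteq \ran x_k$ and $f_k(x_k) > (3/4)\|x_k\|$. If some chosen $f_k$ is an $\al$-average $\al = (1/n)\sum_{i=1}^d g_i$, then since $d \leqslant n$ at least one summand inherits the inequality $g_i(x_k) > (3/4)\|x_k\|$; iterating, I may assume each $f_k$ is a non-average, i.e.\ either $\pm e_i^*$ or a weighted functional. Lemma \ref{if well founded then something} then applies and gives that for each $n\inn$ only finitely many of the $f_k$'s can be weighted functionals of weight $n$, which is precisely the hypothesis of Lemma \ref{you can always find large enough averages of large norm but in the case the tree is well founded then you can have them without alternating signs}. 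The well-foundedness of $\T$ is used at exactly this point: the lemma yields, after a further subsequence, that for every $k_1 < \cdots < k_d$ the functional $(1/d)\sum_{i=1}^d f_{k_i}$ is an \ac-average in $\WT$, with no alternating signs needed.

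I then build a single weighted functional of weight $1$ as follows. Fix an integer $M$ with $M > 8C/(3\de)$ and, by shifting the subsequence, arrange $\min\supp f_{k_1} \geqslant M$. I define \ac-averages $\al_1 < \cdots < \al_M$ inductively: set $\al_1 = f_{k_1}$, viewed as a degenerate one-term \ac-average, and for $q \geqslant 2$ set $\al_q = (1/d_q)\sum_{i \in I_q} f_{k_i}$ where $I_q$ is a block of $d_q$ fresh successive indices and $d_q = |I_q| = s(\al_q)$ is chosen to exceed $2^{\max\supp \al_{q-1}}$. By the previous paragraph each $\al_q$ is an \ac-average in $\WT$; the sequence $(\al_q)_{q=1}^M$ is very fast growing by the choice of the $d_q$'s and $\mathcal{S}_1$-admissible because $M \leqslant \min\supp\al_1$. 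Hence $f = (1/2)\sum_{q=1}^M \al_q$ belongs to $\WT$ and has norm at most $1$ as an element of $\X^*$.

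Finally I evaluate: since $\ran f_{k_i}\subseteq \ran x_{k_i}$ and the $x_k$'s are block, $f_{k_i}(x_{k_j}) = 0$ for $i\neq j$, so for each $q$, $\al_q(\sum_k x_k) = (1/d_q)\sum_{i \in I_q} f_{k_i}(x_{k_i}) \geqslant 3\de/4$, and therefore $f(\sum_k x_k) \geqslant (M/2)(3\de/4) = 3M\de/8 > C$. Taking $\ell$ large enough that $\ran f \subseteq [1, \max\supp x_\ell]$ gives $f(\sum_{k=1}^\ell x_k) > C \geqslant \|\sum_{k=1}^\ell x_k\|$, contradicting $\|f\|_{\X^*} \leqslant 1$. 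I expect the main subtle point to be the inductive construction in the third paragraph: the well-foundedness of $\T$ is precisely what makes the chosen \ac-averages add rather than cancel, since for $\T = \mathcal{U}$ the corresponding sequences would be comparable and force alternating signs, which is why the reflexivity statement is specific to the well-founded case.
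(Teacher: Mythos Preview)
Your proof is correct and follows essentially the same approach as the paper: reduce to showing the basis is boundedly complete, assume otherwise to get a seminormalized block sequence with uniformly bounded partial sums, pick non-average norming functionals, invoke Lemmas \ref{if well founded then something} and \ref{you can always find large enough averages of large norm but in the case the tree is well founded then you can have them without alternating signs} (this is where well-foundedness enters), and then assemble an $\mathcal{S}_1$-admissible very fast growing family of \ac-averages into a weight-$1$ functional that overnorms a partial sum. The only cosmetic differences are that you spell out the descent from an average to a non-average summand and make $\al_1$ a one-term average, whereas the paper simply asserts the choice of non-averages and picks arbitrary finite index sets $F_1<\cdots<F_d$; neither changes the argument.
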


\begin{proof}
We will show that the basis of $\X$ is boundedly complete, which in conjunction with Proposition \ref{shrinking} and James' well known theorem \cite[Theorem 1]{J} yield the desired result. Let us assume that this is not the case, i.e. there is a seminormalized block sequence $(x_k)_k$ and a constant $C$ with $\|\sum_{k=1}^mx_k\| \leqslant C$ for all $m\inn$. For each $k\inn$ choose a functional in $\WT$, which is not an average, so that $\ran f_k\subset \ran x_k$ and $f_k(x_k) > (3/4)\|x_k\|$. Lemmas \ref{if well founded then something} and \ref{you can always find large enough averages of large norm but in the case the tree is well founded then you can have them without alternating signs} yield that there is an infinite subset of the natural numbers $M$, so that for every finite subset $F$ of $M$ the functional $\al_F = (1/\#F)\sum_{k\in F}f_k$ is an \ac-average of $\WT$. Note that for $m\geqslant \max F$ we have
$$\al_F\left(\sum_{k=1}^mx_k\right) = \frac{1}{\#F}\sum_{k\in F}f_k(x_k) > \frac{3}{4}\inf_k\|x_k\|$$
Choose a natural number $d > 6C/(4\inf\|x_k\|)$ and $F_1 < \cdots <F_{d}$ so that the sequence $(\al_{F_q})_{q=1}^d$ is $\mathcal{S}_1$-admissible and very fast growing. Then $f = (1/2)\sum_{q=1}^d\al_{F_q}$ is in $\WT$ and if $m = \max F_d$ we obtain $f(\sum_{k=1}^mx_k) > C$, which is absurd.
\end{proof}

The next result is one of the main features of saturation under constraints and it plays an important role in deducing the properties of the space.

\begin{prp}\label{all exist (ooooh)}
Every block subspace $X$ of $\X$ contains a block sequence generating an $\ell_1$ spreading model, as well as a block sequence generating a $c_0$ spreading model.
\end{prp}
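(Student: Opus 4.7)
My plan is to fix a block subspace $X$ of $\X$, pick an arbitrary normalized block sequence $(y_k)_k$ in $X$, and split on the dichotomy provided by Corollary \ref{sm vs adx}: passing to a subsequence I may assume either $\adyk > 0$ or $\adyk = 0$.

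If $\adyk > 0$, Proposition \ref{if al positive} delivers both ingredients. Its first clause yields a subsequence generating an $\ell_1^n$ spreading model with constant $\theta/2^n$ for every $n$, which is the $\ell_1$ spreading model in the paper's sense. For the $c_0$ spreading model I would apply the second clause of Proposition \ref{if al positive} iteratively to produce, inside $X$, a block sequence $(z_k)_k$ where each $z_k$ is a $(C,\theta,n_k)$-vector and $(n_k)_k$ is strictly increasing. Proposition \ref{vectors generate c0} then gives $\adzk = 0$, and Proposition \ref{if al zero} (or equivalently Corollary \ref{sm vs adx}) hands over a further subsequence generating the isometric $c_0$ spreading model.

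If $\adyk = 0$, then Proposition \ref{if al zero} already furnishes the $c_0$ spreading model via a subsequence of $(y_k)_k$, and the only remaining task is to produce in $X$ a block sequence with $\ell_1$ spreading model. Here I would construct a new normalized block sequence $(z_j)_j \subset X$ consisting of thick successive combinations, for example $z_j = \sum_{i \in I_j} y_i$ with $(I_j)_j$ successive and spread far enough that each $\|z_j\|$ is close to $1$ (possible since $(y_k)_k$ has a $c_0$ spreading model). One then argues that, despite $\adyk = 0$, the sums $(z_j)_j$ pick up the averaging structure of $\WT$ and satisfy $\alpha((z_j)_j) > 0$; once this is in hand, the first case applied to $(z_j)_j$ delivers the $\ell_1$ spreading model.

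The main obstacle is precisely this last verification in the zero $\alpha$-index case: exhibiting a block sequence in $X$ with positive $\alpha$-index starting from one with trivial $\alpha$-index. This will require a careful analysis of how \ac-averages and weighted functionals in $\WT$ act on the thick combinations $(z_j)_j$, producing a very fast growing $\Sn$-admissible family of \ac-averages that detects $(z_j)_j$ non-trivially, and exploits both the infinite-dimensionality of $X$ and the specific averaging structure of the norming set.
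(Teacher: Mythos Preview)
Your treatment of the case $\adyk>0$ is correct and matches the paper's argument exactly. The gap is in the case $\adyk=0$, where you have identified the obstacle correctly but not the mechanism that resolves it.

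The paper does not try to analyze how generic \ac-averages act on thick sums $z_j=\sum_{i\in I_j}y_i$. Instead it \emph{builds} the \ac-averages by hand. For each $k$ one chooses a non-average $f_k\in\WT$ with $\ran f_k\subset\ran y_k$ and $f_k(y_k)>3/4$; Lemma \ref{if c0 then weights are unbounded} ensures the weights of the $f_k$ escape to infinity, and then Lemma \ref{you can always find large enough averages of large norm} is the key: after a subsequence, for any $k_1<\cdots<k_d$ and \emph{alternating} signs $(\e_i)_{i=1}^d$, the functional $\al=(1/d)\sum_{i=1}^d\e_if_{k_i}$ is an \ac-average of $\WT$. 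One then takes $z_n=\sum_{i\in F_n}\e_iy_i$ with alternating signs and $\al_n=(1/\#F_n)\sum_{i\in F_n}\e_if_i$; the $c_0$ spreading model of $(y_k)_k$ keeps $(z_n)_n$ bounded, while $\al_n(z_n)>3/4$ witnesses $\al((z_n)_n)>0$ directly.

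Your proposed plain sums $z_j=\sum_{i\in I_j}y_i$ will not work in general: when the $\phi(w(f_k))$ are pairwise comparable (which can happen when $\T=\mathcal{U}$), Lemma \ref{you can always find large enough averages of large norm} only produces \co-averages, and those are forced to carry alternating signs. Without the matching alternating signs on the vector side, the action of the \ac-average on $z_j$ cancels rather than accumulates. So the missing idea is precisely the pair of Lemmas \ref{if c0 then weights are unbounded} and \ref{you can always find large enough averages of large norm}, together with the use of alternating signs in both the functional and the vector.
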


\begin{proof}
By Proposition \ref{sm vs adx}, it suffices to find, given a block sequence generating an $\ell_1$ spreading model, a further block sequence with $\al$-index zero and, given a block sequence generating a $c_0$ spreading model, a further block sequence with $\al$-index positive. Assume that $(x_k)_k$ is a block sequence generating an $\ell_1$ spreading model, i.e. $\adxk > 0$. By Proposition \ref{if al positive} we may find $C\geqslant 1$, $\theta >0$ and a further block sequence $(y_k)_k$ so that each $y_k$ is a $(C,\theta,n_k)$-vector, with $(n_k)_k$ strictly increasing. Proposition \ref{vectors generate c0} yields the desired result. Assume now that $(x_k)_k$ is a normalized block generating a $c_0$ spreading model, i.e. $\adxk = 0$. Choose a sequence $(f_k)_k$ of non-averages in $\WT$ so that for each $k$, $\ran f_k\subset \ran x_k$ and $f_k(x_k) > 3/4$. By Lemma \ref{if c0 then weights are unbounded} and Lemma \ref{you can always find large enough averages of large norm} we may pass to a further subsequence so that for every $k_1 < \cdots <k_d$ and alternating signs $(\e_i)_{i=1}^d$, the functional $(1/d)\sum_{i=1}^d\e_if_{k_i}$ is an \ac-average of $\WT$. Choose a sequence $(F_n)_n$ of successive subsets of $\N$ with $\#F_n \leqslant \min F_n$ for all $n\inn$ and $\lim_n\#F_n = \infty$. Also choose sequences of alternating signs $(\e_i)_{i\in F_n}$ and set $y_n = \sum_{i\in F_n}\e_ix_i$, $\al_n = (1/\#F_n)\sum_{\e_i}f_i$ for all $n\inn$. Since $(x_k)_k$ generates a $c_0$ spreading model we conclude that $(y_n)_n$ is bounded. Furthermore for each $n$, $\al_n$ is an \ac-average of size $\#F_n$ so that $\al_n(y_n) > 3/4$. It easily follows that $\adyn >0$.
\end{proof}

\begin{lem}\label{bob the builder}
Let $(x_k)_k$ be a block sequence in the unit ball of $\X$ generating a $c_0$ spreading model and $(f_k)_k$ be a sequence of functionals in $\WT$ so that the following are satisfied:
\begin{itemize}

\item[(a)] $f_k$ is not an \ac-average, $\ran f_k\subset \ran x_k$ for all $k\inn$ and

\item[(b)] there is a $\theta>0$, so that $(3/4)\|x_k\|<f_k(x_k) = \theta$ for all $k\inn$.

\end{itemize}
Then for every $n\inn$ there are successive finite subsets of the natural numbers $(F_k)_{k=1}^m$, sequences of signs $(\e_i)_{i\in F_k}$, $k=1,\ldots,m$ and a sequence of non-negative real numbers $(c_k)_{k=1}^m$ so that the following are satisfied:
\begin{itemize}

\item[(i)] the vector $x = 2^n\sum_{k=1}^mc_k(\sum_{i\in F_k}\e_ix_i)$ is a $(9/8,\theta,n)$-exact vector,

\item[(ii)]  the functional $\al_k = (1/\#F_k)\sum_{i\in F_k}\e_if_i$ is an \ac-average of $\WT$ for $k=1,\ldots,m$ and

\item[(iii)] the sequence $(\al_k)_{k=1}^d$ is $\Sn$-admissible and very fast growing. In particular, $f = (1/2^n)\sum_{k=1}^m\al_k$ is a weighted functional in $\WT$ with, $\ran f\subset\ran x$, $w(f) = n$ and $f(x) = \theta$.

\end{itemize}
\end{lem}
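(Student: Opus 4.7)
The plan is a standard RIS-plus-s.c.c.\ construction, adapted to the present setting. Since $(x_k)_k$ generates a $c_0$ spreading model and each $f_k(x_k)>(3/4)\|x_k\|$, Lemma~\ref{if c0 then weights are unbounded} implies that for every $m\inn$ only finitely many $f_k$ have weight $m$; Lemma~\ref{you can always find large enough averages of large norm} then lets me pass to a subsequence (still denoted $(x_k,f_k)$) on which every $(1/d)\sum_{i=1}^d\e_i f_{k_i}$ with alternating signs $(\e_i)$ is an \ac-average of $\WT$. Applying Proposition~\ref{if al zero} to the corresponding $(x_k)$ produces a further subsequence together with a strictly increasing $(j_k)_k$ such that, for every $n\leqslant k_1<\cdots<k_n$, coefficients $(a_i)$, and every weighted $g$ with $w(g)=j<j_n$,
\[
\left|g\!\left(\sum_{i=1}^n a_i x_{k_i}\right)\right|<\frac{9/8}{2^j}\max_i|a_i|.
\]

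Fix $\e<1/(36\cdot(9/8)\cdot 2^{3n})$ and apply Proposition~\ref{basic scc exist in abundance} to a sufficiently sparse $M\subseteq\{\min\supp x_k:k\geqslant k_0\}$, with $k_0$ chosen so that $\min\supp x_{k_0}\geqslant 9\cdot 2^{2n}$ and $j_{k_0}>2^{2n}$. This produces positions $p_1<\cdots<p_m$ in $M$ and non-negative coefficients $(c_k)_{k=1}^m$ such that $\sum_k c_k e_{p_k}$ is an $(n,\e)$-basic s.c.c. Let $i_k$ be the unique index with $\min\supp x_{i_k}=p_k$, and recursively pick successive intervals $F_k$ of indices with $\min F_k=i_k$, $\#F_k\leqslant\min F_k$ (Schreier admissibility), $\#F_{k+1}>2^{\max\supp x_{\max F_k}}$ (very fast growing), and $\#F_k$ large enough that $n_k:=j_{\#F_k}$ is strictly increasing with $n_1>2^{2n}$ and $(1/2^{n_{k+1}})\max\supp y_k<1/2^{n_k}$. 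Assign alternating signs $(\e_i)_{i\in F_k}$ within each $F_k$ and set $y_k:=\sum_{i\in F_k}\e_i x_i$ and $\al_k:=(1/\#F_k)\sum_{i\in F_k}\e_i f_i$.

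The verifications are now direct. The displayed estimate applied to each $F_k$ gives $\|y_k\|\leqslant 9/8$ and the RIS weighted-functional bound, while the basis case is handled by $\|x_i\|_\infty\leqslant 1$; together with the gap condition above, $(y_k)_{k=1}^m$ is a $(9/8,(n_k))$-RIS with $n_1>2^{2n}$. Since $\min\supp y_k=p_k$ and $\sum c_k e_{p_k}$ is an $(n,\e)$-basic s.c.c., the combination $\sum c_k y_k$ is an $(n,\e)$-s.c.c. Each $\al_k$ is an \ac-average by the first step, $(\al_k)_{k=1}^m$ is $\Sn$-admissible because $\min\supp\al_k\geqslant p_k$ and $\{p_k\}\in\Sn$ is spreading-stable, and very fast growing by the size choice; thus $f:=(1/2^n)\sum_{k=1}^m\al_k$ is a weighted functional in $\WT$ with $w(f)=n$ and $\ran f\subseteq\ran x$. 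Using $\ran f_i\subseteq\ran x_i$ together with the pairwise disjoint supports of the $x_i$, one computes $\al_k(y_j)=\theta$ if $j=k$ and $0$ otherwise, so $f(x)=\sum_k c_k\theta=\theta$; this in turn certifies $\|x\|\geqslant\theta$ and hence $x=2^n\sum_k c_k y_k$ as a $(9/8,\theta,n)$-exact vector.

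The principal difficulty is the recursive selection of the intervals $F_k$: very fast growing demands $\#F_{k+1}>2^{\max\supp x_{\max F_k}}$, while Schreier admissibility imposes $\#F_{k+1}\leqslant i_{k+1}$. This forces the positions $p_k\in M$ (hence the indices $i_k$) to grow very rapidly with $k$, which is achieved by invoking Proposition~\ref{basic scc exist in abundance} on a sufficiently sparse $M$. Once this bookkeeping is organized, every remaining claim is an immediate consequence of the results established in Sections~\ref{section general estimates} and \ref{section index} together with the opening lemmas of the present section.
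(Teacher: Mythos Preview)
Your argument is correct and uses the same ingredients as the paper (Lemma~\ref{if c0 then weights are unbounded} followed by Lemma~\ref{you can always find large enough averages of large norm}, then Proposition~\ref{if al zero}, then the RIS/s.c.c.\ machinery), but the order of the last two steps is reversed. The paper first builds an \emph{infinite} block sequence $(w_q)_q$, where each $w_q=\sum_{i\in I_q}\e_ix_i$ with intervals $I_q$ chosen to satisfy the size, very-fast-growing, and RIS gap conditions; only afterwards does it invoke Proposition~\ref{basic scc exist in abundance} to select indices $q_1<\cdots<q_m$ and coefficients $(c_k)$ so that $2^n\sum_kc_kw_{q_k}$ is a $(9/8,\theta,n)$-exact vector. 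This decoupling makes the bookkeeping trivial: the constraints on the $I_q$'s are purely recursive in $q$ and do not interact with the s.c.c.\ selection at all.

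Your route fixes the s.c.c.\ positions $p_1<\cdots<p_m$ first and then tries to fit the intervals $F_k$ around them, which forces you to pre-arrange $M$ so sparse that \emph{any} s.c.c.\ subset of $M$ leaves enough room between consecutive positions for the very-fast-growing and RIS requirements. As you note, this is achievable (gaps in any subset of $M$ are at least the gaps in $M$), but the required sparseness of $M$ must absorb a tower of constraints that in the paper's approach are handled one $q$ at a time. Both arguments are valid; the paper's ordering simply avoids the interleaved dependency you flag in your final paragraph.
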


\begin{proof}
By Corollary \ref{sm vs adx} $\adxk = 0$ and by Lemma \ref{if c0 then weights are unbounded} we obtain that, passing to a subsequence, $(f_k)_k$ satisfies the conclusion of Lemma \ref{you can always find large enough averages of large norm}, i.e.
\begin{itemize}
\item[(c)] for every natural numbers $k_1 <\cdots k_d$ and alternating signs $(\e_i)_{i=1}^d$, the functional $\al = (1/d)\sum_{i=1}^d\e_if_{k_i}$ is an \ac-average of $\WT$.
\end{itemize}

Corollary \ref{sm vs adx} yields that $\adxk = 0$ and so we pass once more to a subsequence and find a strictly increasing sequence of natural numbers $(j_k)_k$, so that the conclusion of Proposition \ref{if al zero} holds, i.e. for every natural numbers $d \leqslant k_1 < \cdots < k_d$, scalars $(\la_i)_{i=1}^d$ we have
\begin{equation}\label{fluffy}
\left\|\sum_{i=1}^d\la_ix_{k_i}\right\| \leqslant (9/8)\max_{1\leqslant i\leqslant d}|\la_i|
\end{equation}
and for every weighted functional in $\WT$ $f$ with $w(f)  = j < j_d$, we have
\begin{equation}\label{hare}
\left|f\left(\sum_{i=1}^d\la_ix_{k_i}\right)\right| < \frac{9/8}{2^j}\max_{1\leqslant i\leqslant d}|\la_i|.
\end{equation}
Inductively choose a sequence of successive intervals of $\N$ $(I_q)_q$ so that the following are satisfied:
\begin{itemize}

\item[(d)] $\min I_q \leqslant \#I_q$ for all $q\inn$,

\item[(e)] $\#I_{q+1} > 2^{\max\supp x_{\max I_q}}$ for all $q\inn$ and

\item[(f)] $1/2^{j_{\min I_{q+1}}}\max\supp x_{\max I_q} < 1/2^{j_{\max I_q}}$ for all $q\inn$.

\end{itemize}
For each $q$ choose alternating signs $(\e_i)_{i\in I_q}$ and define
\begin{equation*}
w_q = \sum_{i\in I_q}\e_ix_i\;\text{and}\;\al_q = \frac{1}{\#I_q}\sum_{i\in I_q}\e_if_i.
\end{equation*}
Then \eqref{fluffy} and (d) yield $\|w_q\|\leqslant 9/8$ for all $q\inn$ while by (c) and (e) $(\al_q)_q$ is a very fast growing sequence of \ac-averages of $\WT$. Since $\ran \al_q\subset\ran w_q$ for all $q$ we easily obtain the following:
\begin{itemize}

\item[(g)] whenever $F\subset \N$ is such that $(w_q)_{q\in F}$ is $\mathcal{S}_n$-admissible, then $f = (1/2^n)\sum_{q\in F}\al_q$ is in $\WT$ and hence, if $(\la_q)_{q\in F}$ are non-negative scalars with $\sum_{q\in F}\la_q = 1$, then $f(2^n\sum_{q\in F}\la_qw_q) = \theta$.

\end{itemize}
Furthermore, by \eqref{hare} and (f), the sequence $(w_q)_q$ is $(9/8,(j_q')_q)$-RIS, where $j_q' = j_{\min I_q}$ for all $q\inn$.

By Proposition \ref{basic scc exist in abundance} we may choose $q_1<\cdots<q_m$ and non-negative real numbers $(c_k)_{k=1}^m$ so that the vector $x = 2^n\sum_{k=1}^mc_kw_{q_k}$ satisfies all assumptions of the definition of a $(9/8,\theta,n)$-exact vector (see Definition \ref{def vector}). Therefore, the $(I_{q_k})_{k=1}^m$, $(\e_i)_{i\in I_{q_k}}$ for $k=1,\ldots,m$ and $(c_k)_{k=1}^m$ satisfy the desired conclusion.
\end{proof}

\begin{rmk}\label{you might have to do it simultaneously}
Let $(x_k)_k$, $(f_k)_k$ satisfy the assumptions of Lemma \ref{bob the builder}. Assume moreover that $(g_k)_k$ is a sequence of successive functionals in $\WT$ such that for each $n\inn$, the set of $k$'s with $w(g_k) = n$ is finite. The same method of proof, and an argument involving Proposition \ref{basic scc exist in abundance}, Remark \ref{is still scc} and the spreading property of the Schreier families, yields that we may find $(F_k)_{k=1}^m$, $(\e_i)_{i\in F_k}$, $k=1,\ldots,m$ and $(c_k)_{k=1}^m$ satisfying the conclusion of Proposition \ref{bob the builder} so that moreover the functional $g =  (1/2^n)\sum_{k=1}^m((1/(\#F_k))\sum_{i\in F_k}\e_ig_i)$ is a weighted functional of weight $w(g) = n$ in $\WT$.
\end{rmk}

\begin{lem}\label{vectors exist}
Let $X$ be a block subspace of $\X$ and $n\inn$. Then there exists a $(9/8,8/9,n)$-exact pair $(f,x)$ so that $x$ is in $X$.
\end{lem}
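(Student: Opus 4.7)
The plan is to combine Proposition \ref{all exist (ooooh)} with Lemma \ref{bob the builder}, after a uniform rescaling designed to calibrate the constant $\theta$ to exactly $8/9$. First, I would apply Proposition \ref{all exist (ooooh)} to the block subspace $X$ to obtain a normalized block sequence $(x_k)_k$ in $X$ generating a $c_0$ spreading model. Since $\|x_k\|=1$, for each $k$ there exists $g_k \in \WT$ with $g_k(x_k) > 8/9$. I would argue that $g_k$ may be chosen to be a non-$\al_c$-average, i.e.\ either a basis element or a weighted functional: by Remark \ref{basics of norming}(i) every element of $\WT$ is one of these three types, and any \ac-average $\al = (1/n)\sum_{i=1}^d \e_i h_i$ satisfies $|\al(x_k)| \leqslant \max_i|h_i(x_k)|$, so one of its constituent weighted functionals (or basis elements, in the basic case), combined with the symmetry of $\WT$, does at least as well on $x_k$ as $\al$ itself. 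Restricting to $\ran x_k$ (permissible by Remark \ref{basics of norming}(ii) and Remark \ref{restrictions are in}) yields $f_k \in \WT$, not an \ac-average, with $\ran f_k \subset \ran x_k$ and $f_k(x_k) > 8/9$.

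Next I would perform the calibration: set $\la_k = (8/9)/f_k(x_k) \in (0,1)$ and $y_k = \la_k x_k$. Then $y_k \in X$ lies in the unit ball of $\X$, $f_k(y_k) = 8/9$, and $(3/4)\|y_k\| < 8/9 = f_k(y_k)$, verifying hypothesis (b) of Lemma \ref{bob the builder} with $\theta = 8/9$. To verify that $(y_k)_k$ still generates a $c_0$ spreading model, I would use Definition \ref{al index} together with $\la_k \leqslant 1$ to observe that
\begin{equation*}
\sum_{q\in F_m}|\al_q(y_{k_m})| = \la_{k_m}\sum_{q\in F_m}|\al_q(x_{k_m})| \longrightarrow 0,
\end{equation*}
so $\adyk = 0$; by Corollary \ref{sm vs adx}(i), after passing to a subsequence $(y_k)_k$ generates a $c_0$ spreading model.

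Finally, I would apply Lemma \ref{bob the builder} to the sequences $(y_k)_k$, $(f_k)_k$ with $\theta = 8/9$ and the given $n\in\N$. This produces successive finite sets $(F_k)_{k=1}^m$, sign sequences $(\e_i)_{i \in F_k}$, and non-negative scalars $(c_k)_{k=1}^m$ such that $x = 2^n\sum_{k=1}^m c_k\sum_{i\in F_k}\e_i y_i$ is a $(9/8, 8/9, n)$-exact vector lying in $X$, and $f = (1/2^n)\sum_{k=1}^m\al_k$, with $\al_k = (1/\#F_k)\sum_{i\in F_k}\e_i f_i$, is a weighted functional of $\WT$ with $w(f) = n$, $\ran f \subset \ran x$, and $f(x) = 8/9$. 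Thus $(f,x)$ is the required $(9/8, 8/9, n)$-exact pair with $x \in X$.

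The only real obstacle is the calibration step: Lemma \ref{bob the builder} demands a single constant $\theta$ shared by all pairs $(f_k,x_k)$, whereas the natural choices of $f_k$ yield $f_k(x_k)$ varying in $(8/9, 1]$. The uniform rescaling $y_k = \la_k x_k$ resolves this, and I expect the only care required is to confirm that rescaling by factors in $(0,1]$ preserves membership in the unit ball and $\al$-index zero, both of which are immediate from the definitions.
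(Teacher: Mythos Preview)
Your proposal is correct and follows essentially the same approach as the paper: obtain a normalized $c_0$ spreading model block sequence via Proposition \ref{all exist (ooooh)}, choose non-average functionals $f_k$ with $f_k(x_k)>8/9$, rescale by $(8/9)/f_k(x_k)$ to force the common value $\theta=8/9$, and invoke Lemma \ref{bob the builder}. You supply more justification than the paper does (why $f_k$ may be taken non-average, why the rescaled sequence still has $\al$-index zero), but the skeleton is identical.
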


\begin{proof}
By Proposition \ref{all exist (ooooh)} there exists a normalized block sequence $(x_k)_k$ in $X$ generating a $c_0$ spreading model. Choose a sequence of functionals $f_k$ in $\WT$ so that for each $k$, $f_k$ is not an average, $f_k(x)_k > 8/9$ and $\ran f_k\subset \ran x_k$. Define $x_k' = (8/(9f_k(x_k)))x_k$ and observe that the assumptions of Lemma \ref{bob the builder} are satisfied for $(x_k')_k$, $(f_k)_k$ and $\theta = 8/9$. The first and third assertions of the conclusion of that proposition yield the desired result.
\end{proof}

\begin{lem}\label{maximal dependent}
Let $X$ and $Y$ be block subspaces of $\X$, both generated by vectors with rational coefficients. Then there exists an initial interval $E$ of $\N$ (finite or infinite) and a sequence of exact pairs $\{(f_k,x_k)\}_{k\in E}$ so that the following are satisfied:
\begin{itemize}

\item[(i)] for $k$ odd $x_k$ is in $X$ while for $k$ even $x_k$ is in $Y$,

\item[(ii)] $\{(f_k,x_k)\}_{k=1}^m$ is a $(9/8,8/9)$-dependent sequence for all $m\in E$ and

\item[(iii)] $\{\{(f_k,x_k)\}_{k=1}^m:\;m\in E\}$ is a maximal chain of $\T$.

\end{itemize}
\end{lem}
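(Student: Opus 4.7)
The plan is an inductive construction that builds the exact pairs one at a time, invoking Lemma \ref{vectors exist} to produce a $(9/8, 8/9, n)$-exact pair of prescribed weight $n$ inside a chosen block subspace, while letting the coding function $\sigma$ dictate the weights from the second step onward.

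I would begin by fixing any $n_1 \in L_0$ and applying Lemma \ref{vectors exist} to $X$ to obtain $(f_1, x_1)$ with $x_1 \in X$ and $w(f_1) = n_1$. Since $w(f_1) \in L_0$, the singleton $\{(f_1, x_1)\}$ is a special sequence, hence lies in $\mathcal{U}$; it is also $\mathcal{S}_2$-admissible, so by property (iii) of the subtree $\mathcal{T}$ in Subsection \ref{subtrees} it lies in $\mathcal{T}$. For the inductive step, given $\{(f_k, x_k)\}_{k=1}^m \in \mathcal{T}$, I distinguish two cases. If this node is maximal in $\mathcal{T}$, the construction terminates with $E = \{1, \ldots, m\}$. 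Otherwise I set $n_{m+1} = \sigma(\{(f_k, x_k)\}_{k=1}^m)$ and apply Lemma \ref{vectors exist} to the tail block subspace of $X$ or $Y$ (according to the parity of $m+1$) consisting of vectors supported beyond $\max \supp x_m$ and beyond $9 \cdot 2^{2 n_{m+1}}$, the threshold required by property (i) of Definition \ref{def vector}. The resulting $(9/8, 8/9, n_{m+1})$-exact pair $(f_{m+1}, x_{m+1})$ augments the previous sequence to a special sequence in $\mathcal{U}$, and by subtree property (ii), applied at the non-maximal node $\{(f_k, x_k)\}_{k=1}^m$, the extension lies in $\mathcal{T}$.

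This procedure either halts at a maximal node of $\mathcal{T}$ (giving a finite $E$) or continues indefinitely (giving $E = \mathbb{N}$); conditions (i) and (ii) are evident from the construction. For maximality of the chain in (iii), in the finite case it holds by the halting criterion, while in the infinite case it is automatic, since an infinite branch of $\mathcal{T}$ cannot be extended by a further node. The main point to check is that Lemma \ref{vectors exist} can genuinely be invoked at every step with the prescribed weight in a block tail of $X$ or $Y$; this is not really an obstacle, since that lemma accepts any $n \in \mathbb{N}$ and any block subspace, and the hypothesis that $X, Y$ are generated by rational-coefficient vectors passes to their tails, so the output vectors satisfy the rationality requirement in Definition \ref{def dependent sequence}.
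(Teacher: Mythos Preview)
Your proof is correct and follows essentially the same approach as the paper: inductively apply Lemma \ref{vectors exist} to build exact pairs of prescribed weight (dictated by $\sigma$ after the first step), alternating between $X$ and $Y$, so as to form a chain in $\mathcal{U}$ that is then cut off at a maximal node of $\T$. The only organizational difference is that the paper first constructs the entire infinite sequence in $\mathcal{U}$ and then takes $m_0 = \max\{m : \{(f_k,x_k)\}_{k=1}^m \in \T\}$, invoking property (ii) of $\T$ to deduce maximality a posteriori, whereas you check maximality at each step and halt; these are clearly equivalent.
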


\begin{proof}
Using an inductive argument and Lemma \ref{vectors exist}, we choose a sequence of $(9/8,8/9,n_k)$-exact pairs $\{(f_k,x_k)\}_{k=1}^\infty$ so that (i) of the conclusion holds and $\{(f_k,x_k)\}_{k=1}^m$ is in $\mathcal{U}$ for all $m\inn$.  By property (i) of $\T$ from Subsection \ref{subtrees} we obtain that $\{(f_1,x_1)\}$ is in $\T$. If for all $m\inn$ we have that $\{(f_k,x_k)\}_{k=1}^m$ is in $\T$, then we obtain that for $E = \N$  the conclusion is satisfied. Otherwise, set $m_0 = \max\{m\inn:\;\{(f_k,x_k)\}_{k=1}^m\in\T\}$ and by property (ii) of $\T$ from Subsection \ref{subtrees} we obtain that, setting $E = \{1,\ldots,m_0\}$, the conclusion holds.
\end{proof}

Recall that $\T$ is a subtree of the universal tree $\mathcal{U}$ associated with the coding function $\sigma$. If we take $\T$ to be all of $\mathcal{U}$, we obtain the result below.

\begin{thm}\label{no reflexive subspace}
The space $\mathfrak{X}_\mathcal{U}$ contains no reflexive subspace.
\end{thm}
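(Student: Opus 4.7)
The plan is to show directly that every infinite-dimensional closed subspace $V$ of $\mathfrak{X}_\mathcal{U}$ is non-reflexive. A standard Bessaga--Pelczynski perturbation argument lets me reduce to the case where $V$ contains an isomorphic copy of a block subspace $X$ of $\mathfrak{X}_\mathcal{U}$ generated by vectors with rational coefficients; since non-reflexivity passes through isomorphisms, it then suffices to exhibit a non-reflexive subspace inside every such $X$.

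To build that non-reflexive subspace I would apply Lemma \ref{maximal dependent} with $X = Y$. The key observation is that for $\T = \mathcal{U}$ the tree is ill founded, so every maximal chain of $\mathcal{U}$ is infinite, and conclusion (iii) of Lemma \ref{maximal dependent} forces the initial interval $E$ produced by the lemma to be all of $\N$. This yields an infinite sequence $\{(f_k,x_k)\}_{k=1}^\infty$ with $x_k\in X$ for every $k$ and with each initial segment $\{(f_k,x_k)\}_{k=1}^m$ a $(9/8,8/9)$-dependent sequence. Proposition \ref{dependent sum bounded} applied to each such segment provides the uniform bound
$$\left\|\sum_{k=1}^N x_k\right\| \leqslant 24\cdot\tfrac{9}{8} = 27\quad\text{for every }N\inn.$$
Since $f_k(x_k)=8/9$ and $\|f_k\|\leqslant 1$, one also has $\|x_k\|\geqslant 8/9$, so $(x_k)_k$ is seminormalized.

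Setting $Z = \overline{\mathrm{span}}\{x_k:k\inn\}\subset X$ and $s_N = \sum_{k=1}^N x_k$, I would argue that $Z$ is not reflexive by showing that $(s_N)_N$ admits no weakly convergent subsequence. Because the basis of $\mathfrak{X}_\mathcal{U}$ is shrinking by Proposition \ref{shrinking}, weak convergence is detected by the biorthogonal functionals $(e_m^*)_m$, and for each $m$ the scalar $e_m^*(s_N)$ stabilizes as soon as $N$ exceeds the index of the (unique) block $x_k$ whose support contains $m$; so $(s_N)_N$ is weakly Cauchy. If some subsequence $s_{N_j}$ converged weakly to $s\in \mathfrak{X}_\mathcal{U}$, comparing coordinates against $(e_m^*)_m$ would force the shrinking-basis expansion of $s$ to coincide with the formal series $\sum_k x_k$; hence the sequence $(s_N)$ itself would converge to $s$ in norm, contradicting $\|s_{N+1}-s_N\|=\|x_{N+1}\|\geqslant 8/9$. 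Thus $Z$ contains a bounded sequence with no weakly convergent subsequence and is therefore non-reflexive.

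The main technical obstacle has already been absorbed by the machinery developed above: Proposition \ref{dependent sum bounded} supplies the core estimate bounding dependent-sum partial sums uniformly in length, and Lemma \ref{maximal dependent} combined with the ill-foundedness of $\mathcal{U}$ packages the construction of an infinite branch of exact pairs inside a prescribed block subspace. The remaining steps --- the Bessaga--Pelczynski reduction to a block subspace, the bounded-partial-sums witness for non-reflexivity, and the shrinking-basis weak-Cauchy analysis --- are routine.
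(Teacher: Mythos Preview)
Your proposal is correct and follows essentially the same approach as the paper's own proof. Both arguments reduce to a block subspace generated by rational vectors, invoke Lemma \ref{maximal dependent} together with the ill-foundedness of $\mathcal{U}$ to produce an infinite $(9/8,8/9)$-dependent sequence $\{(f_k,x_k)\}_k$, and then apply Proposition \ref{dependent sum bounded} to get $\|\sum_{k=1}^N x_k\|\leqslant 27$ with $\|x_k\|\geqslant 8/9$; the paper simply phrases the conclusion as ``the block sequence is not boundedly complete'' whereas you spell out the equivalent Eberlein--\v{S}mulian/weak-limit contradiction, but the content is the same.
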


\begin{proof}
It is enough to show that any block sequence with rational coefficients is not boundedly complete. Indeed, let $(z_k)_k$ be such a block sequence and apply Lemma \ref{maximal dependent}, for $X = Y = [(z_k)_k]$ to find a sequence of exact pairs $\{(f_k,x_k)\}_{k\in E}$ satisfying the conclusion of that lemma. Recall that every maximal chain in $\mathcal{U}$ is infinite and hence $E = \N$. Finally, $\|x_k\| \geqslant 8/9$ for all $k\inn$ while by Proposition \ref{dependent sum bounded} we have that $\|\sum_{k=1}^nx_k\| \leqslant 27 $ for all $n\inn$.
\end{proof}

\begin{thm}\label{HI (hi right back atcha)}
The space $\X$ is hereditarily indecomposable.
\end{thm}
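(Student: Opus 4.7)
The plan is to prove HI via the standard alternating dependent sequence argument, using Proposition~\ref{dependent sum bounded} for an upper bound and a weighted-functional amplification for a lower bound. Given two infinite-dimensional subspaces of $\X$, a routine perturbation using the bimonotonicity of the basis reduces the problem to the case of block subspaces $X,Y$ with rational coefficients. Given $\varepsilon>0$, I aim to produce $u\in X$ and $v\in Y$ with $\|u+v\|<\varepsilon(\|u\|+\|v\|)$. Apply Lemma~\ref{maximal dependent} to $X,Y$ to obtain a $(9/8,8/9)$-dependent sequence $\{(f_k,x_k)\}_{k\in E}$ in $\T$ with $x_k\in X$ for odd $k$ and $x_k\in Y$ for even $k$, and truncate to an initial segment of even length $2N$ with $N$ to be fixed (possible in either the well-founded case or $\T=\mathcal{U}$, since the tree contains all $\mathcal{S}_2$-admissible chains by Subsection~\ref{subtrees}(iii)). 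Set $u=\sum_{k\le 2N,\,k\text{ odd}}x_k\in X$ and $v=\sum_{k\le 2N,\,k\text{ even}}x_k\in Y$. Proposition~\ref{dependent sum bounded} immediately yields the uniform upper bound $\|u+v\|=\bigl\|\sum_{k=1}^{2N}x_k\bigr\|\le 24\cdot\tfrac{9}{8}=27$.

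The heart of the argument is to show that $\|u-v\|=\bigl\|\sum_{k=1}^{2N}(-1)^{k+1}x_k\bigr\|$ can be made arbitrarily large by suitable choices of the parameters. Since the $f_k$ lie on a single chain of $\T$, the sequence $(f_k)_{k=1}^{2N}$ is comparable, and because $f_k(x_k)=\theta=8/9$ for every $k$ the conditions (b) and (c) of Definition~\ref{def averages}(ii) are satisfied trivially. Hence, partitioning $\{1,\ldots,2N\}$ into successive intervals of consecutive integers $I_1<\cdots<I_L$, the averages
\[
\alpha_q=\frac{1}{|I_q|}\sum_{k\in I_q}(-1)^{k+1}f_k,\qquad q=1,\ldots,L,
\]
are \co-averages (and therefore \ac-averages) of $\WT$; note that the signs alternate in $k$ and hence in the enumeration of $I_q$. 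By choosing $\min\supp x_1$ sufficiently large (possible since Lemma~\ref{vectors exist} imposes no upper bound on this quantity) and letting the sizes $|I_q|$ grow super-exponentially so that $|I_{q+1}|>2^{\max\supp\alpha_q}$, while starting the family $(\alpha_q)$ from an index where $w(f_{\min I_q})$ is already astronomical, one arranges for $(\alpha_q)_{q=1}^L$ to be simultaneously very fast growing and $\mathcal{S}_j$-admissible, so that $g=(1/2^j)\sum_{q=1}^L\alpha_q$ is a weighted functional of weight $j$ in $\WT$. Expanding $g(u-v)$ and separating diagonal from off-diagonal: the diagonal $k=k'$ contributes $\theta$ per $\alpha_q$, totalling $L\theta/2^j$; off-diagonal terms with $k>k'$ vanish since $\ran f_k\subset\ran x_k$ is disjoint from $\ran x_{k'}$; and off-diagonal terms with $k<k'$ are bounded by $7C/2^{w(f_k)}$ via Proposition~\ref{vectors are ris}, and are negligible thanks to the super-exponential growth $\ell_{r+1}>2^{2\ell_r}$ of the weight sequence in $L'$. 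Consequently $g(u-v)\ge L\theta/2^j-o(1)$, which by choice of $L/2^j$ can be made as large as desired. The triangle inequality $\|u\|+\|v\|\ge\|u-v\|$ then gives $\|u+v\|/(\|u\|+\|v\|)\le 27/\|u-v\|<\varepsilon$, proving HI.

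The main technical obstacle is the simultaneous satisfaction of all combinatorial demands on the $\alpha_q$. The very-fast-growing condition $|I_{q+1}|>2^{\max\supp\alpha_q}$ forces the $|I_q|$ to grow at a staggering rate (because the supports of the $x_k$ already grow super-exponentially via the coding function), requiring the dependent sequence to be extraordinarily long; the $\mathcal{S}_j$-admissibility of $(\alpha_q)$ further bounds $L$ in terms of $\min\supp\alpha_1$; and the off-diagonal contribution coming from the first group (whose weight $w(f_1)\in L_0$ is only guaranteed to be at least~$8$) must be absorbed, which I handle by omitting that group from the sum defining $g$. All these constraints can be met because of property~(iii) of $\T$ (guaranteeing dependent sequences of any desired length) and the freedom in Lemma~\ref{vectors exist} to start $x_1$ at an arbitrarily late natural number; the detailed off-diagonal computation is routine once the combinatorics are in place.
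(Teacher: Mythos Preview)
Your overall strategy matches the paper's---an alternating dependent sequence, the upper bound from Proposition~\ref{dependent sum bounded}, and a lower bound via a weighted functional built from \co-averages of the $f_k$---but there is a genuine gap in the step where you arrange for $(\alpha_q)_q$ to be very fast growing. You propose to achieve $|I_{q+1}|>2^{\max\supp\alpha_q}$ by letting the interval lengths grow super-exponentially after the dependent sequence has been fixed. This cannot work: since $\max\supp\alpha_q\geq\min\supp x_{\max I_q}$ and the coding forces $\min\supp x_{k+1}>8C\cdot 2^{2w(f_{k+1})}>2^{2\max\supp x_k}$, the supports of the $x_k$ already grow as an exponential tower in $k$. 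Thus the required size of $I_3$ alone exceeds any fixed tower, whereas in the well-founded case the entire dependent sequence has length bounded by the cardinality of a maximal $\mathcal{S}_2$-subset of $\{\min\supp f_k\}$, which is only of order $m\cdot 2^m$ with $m=\min\supp f_1$. You can fit at most two genuinely very-fast-growing averages, so $L/2^j$ stays bounded and $\|u-v\|$ cannot be driven to infinity.

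The paper's fix is to impose the growth condition \emph{before} building the dependent sequence, on the generating block bases $(z_k)$ of $X$ and $(w_k)$ of $Y$: one first passes to further block subspaces so that $\min\supp z_1\geq n$ and so that each block in the interleaved sequence begins beyond $2^{\max\supp(\text{previous block})}$. One then takes $G$ so that $\{\min\supp f_k:k\in G\}$ is a \emph{maximal} $\mathcal{S}_2$-set and lets $G_1,\ldots,G_d$ be its decomposition into maximal $\mathcal{S}_1$-sets; this gives $d\geq n$ and $s(\alpha_{q+1})=\#G_{q+1}=\min\supp f_{\min G_{q+1}}$. The pre-imposed exponential gap between consecutive $x_k$'s (one in $X$, the next in $Y$) then yields $s(\alpha_{q+1})>2^{\max\supp\alpha_q}$ automatically, and $f=(1/2)\sum_q\alpha_q$ gives $\|x-y\|\geq (4/9)n$. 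Incidentally, your off-diagonal analysis is unnecessary: since $\ran f_k\subset\ran x_k$ and the $x_k$ have disjoint ranges, every cross term $f_k(x_{k'})$ with $k\neq k'$ vanishes identically.
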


\begin{proof}
We will show that for every block subspaces $X$ and $Y$ of $\X$, both generated by vectors with rational coefficients, and for every $n\inn$ there exists $x\in X$ and $y\in Y$ so that $\|x+y\| \leqslant 53$ and $\|x-y\| \geqslant (4/9)n$. by passing to further block subspaces, we may assume the $X$ and $Y$ are generated by block sequences $(z_k)_k$ and $(w_k)_k$ respectively, so that
\begin{itemize}

\item[(i)] $\min\supp z_1 \geqslant n$,

\item[(ii)] $\min\supp z_k > 2^{\max\supp w_k}\;\text{and}\;\min\supp w_k > 2^{\max\supp z_k}\;\text{for all}\;k\inn$.

\end{itemize}
Apply Lemma \ref{maximal dependent} to find sequences $(x_k)_{k\in E}$ and $(f_k)_{k\in E}$ satisfying the conclusion of that lemma. The maximality property of that conclusion in conjunction with property (iii) of $\T$ from Subsection \ref{subtrees} yield that there is an initial interval $G$ of $E$ so that the set $\{\min\supp f_k:\;k\in G\}$ is a maximal $\mathcal{S}_2$-set. By the definition of $\mathcal{S}_2$ choose a partition of $G$ into successive intervals $G_1,\ldots,G_d$ so that:
\begin{itemize}

\item[(a)] $\{\min\supp f_{\min G_q}:\;q=1,\ldots,d\}$ is an $\mathcal{S}_1$-set and

\item[(b)] $\{\min\supp f_k:\;k\in G_q\}$ is an $\mathcal{S}_1$-set for $q = 1,\ldots,d$.
\end{itemize}
Then (i) implies that $n\leqslant d$ while the maximality of $\{\min\supp f_k:\;k\in G\}$ implies that each $\{\min\supp f_k:\;k\in G_q\}$ is a maximal $\mathcal{S}_1$-set, i.e. $\# G_q = \min\supp f_{\min G_q}$, for $q = 1,\ldots,d$.

Define $G_{o} = \{k\in G:\;k\;\text{odd}\}$ and $G_{e} = \{k\in G:\;k\;\text{even}\}$. Set $x = \sum_{k\in G_{o}}x_k$ and $y = \sum_{k\in G_{e}}x_k$. Then $x\in X$, $y\in Y$ and $\|x+y\|\leqslant 47(9/8) <53$ by Proposition \ref{dependent sum bounded}.

The sequence $(f_k)_{k\in G_q}$ can easily seen to be comparable and hence, the functional $\al_q = (1/\#G_q)\sum_{k\in G_q}(-1)^kf_k$ is an \ac-average for each $q=1,\ldots,n$ with $\al_q(\sum_{k\in G_q}(-1)^kx_k) = 8/9$. Also the sequence $(\al_q)_{q=1}^d$ is $\mathcal{S}_1$-admissible by (a). Also by (ii), $s(\al_{q+1}) = \min\supp f_{\min G_{q+1}} > 2^{\max\supp \al_q}$ and hence the sequence $(\al_q)_{q=1}^d$ is very fast growing. We conclude that $f = (1/2)\sum_{q=1}^d\al_q$ is in $\WT$ and $f(x-y) = (1/2)\sum_{q=1}^d\al_q(\sum_{k\in G_q}(-1)^kx_k) = (1/2)d(8/9) \geqslant (4/9)n$ which yields the desired result.
\end{proof}

\section{The spreading models of $\X$}

In the case $\T$ is well founded, i.e. the space $\X$ is reflexive, Propositions \ref{sm vs adx} and \ref{all exist (ooooh)} clarify all types of spreading models admitted by Schauder basic sequences in subspaces of $\X$. In the case of the space $\mathfrak{X}_\mathcal{U}$ non-trivial weakly Cauchy sequences exist in every subspace of the space and this section is devoted to determining what types of spreading models these sequences admit. We start the section by presenting some simple general facts about spreading sequences, i.e. sequences which are equivalent to their subsequences.

\begin{lem}\label{if difference c0 then summing}
Let $(e_k)_k$ be a conditional and spreading Schauder basic sequence so that if for all $k$ we set $u_k = e_{2k-1} - e_{2k}$, then $(u_k)_k$ is equivalent to the unit vector basis of $c_0$. Then $(e_k)_k$ is equivalent to the summing basis of $c_0$.
\end{lem}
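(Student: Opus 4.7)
My plan is to establish that $(e_k)$ is equivalent to the summing basis of $c_0$, namely
\[
\Bigl\|\sum_{k=1}^n a_k e_k\Bigr\|\asymp \max_{1\le j\le n}|P_j|,\qquad P_j:=\sum_{k=1}^j a_k.
\]
I would proceed by first showing that $(e_k)$ is non-trivially weakly Cauchy, and then establishing the upper and lower summing-basis estimates separately.

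For weak Cauchyness, the hypothesis gives that $u_k\to 0$ weakly (the unit vector basis of $c_0$ being weakly null), and by the spreading property the same holds for every $(e_{n_k}-e_{m_k})_k$ with $n_k<m_k$ and $n_k\to\infty$. Hence any two subsequential weak limits of $(x^*(e_k))_k$ coincide, so $(e_k)$ is weakly Cauchy. Since a weakly null spreading basic sequence is necessarily unconditional (a standard fact), the conditional hypothesis forces the weak-$*$ limit $L\in X^{**}$ to satisfy $L\notin X$, and in particular $\|L\|>0$.

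Setting $d_k:=e_k-e_{k+1}$, the upper bound follows from a $c_0$-type estimate on $(d_k)$. By the spreading property, for any interleaved $n_1<m_1<\cdots<n_r<m_r$, the vector $\sum_{i=1}^r(e_{n_i}-e_{m_i})$ is $K$-equivalent to $\sum_i u_i$ and hence of norm at most $KC$, where $K$ is the spreading constant of $(e_k)$ and $C$ is the $c_0$-constant of $(u_k)$. Splitting by signs and integrating indicators of coefficient levels gives $\|\sum_{k=1}^n a_k d_k\|\le C_1\max_k|a_k|$. Combining this with the algebraic identity
\[
\sum_{k=1}^n a_k e_k = P_n e_n+\sum_{k=1}^{n-1}P_k d_k
\]
and the uniform boundedness of $\|e_k\|$ from spreading, I obtain the upper summing-basis estimate $\|\sum a_k e_k\|\le C_2\max_j|P_j|$.

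For the lower bound I would use a shift argument. Pick $\phi\in X^*$ with $L(\phi)=1$ and $\|\phi\|\le 2/\|L\|$ (from Hahn-Banach applied to $\R L\subset X^{**}$). Given $y=\sum_{k=1}^n a_k e_k$, consider $y_N:=\sum_{k=1}^n a_k e_{k+N}$; by spreading $\|y_N\|\le K\|y\|$, whereas $\phi(y_N)=\sum_{k=1}^n a_k\phi(e_{k+N})\to P_n$ as $N\to\infty$ (a finite sum of convergent terms). Therefore $|P_n|\le K\|\phi\|\|y\|\le (2K/\|L\|)\|y\|$, and bimonotonicity applied to the truncation $\sum_{k=1}^j a_k e_k$ extends this to $|P_j|\le (2K/\|L\|)\|\sum a_k e_k\|$ for every $j$, completing the lower estimate. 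The main obstacle is establishing the nontriviality $\|L\|>0$; this uses the conditional hypothesis in an essential way, since without it $(e_k)$ could be weakly null, in which case the shift argument provides no lower bound (indeed, an unconditional spreading sequence satisfying $(u_k)\sim c_0$ is necessarily equivalent to the $c_0$-basis, for which the summing-basis lower estimate fails).
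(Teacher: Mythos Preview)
Your proof is correct, but the paper takes a considerably shorter route. The paper defines $d_1=e_1$, $d_k=e_k-e_{k-1}$ for $k\ge 2$ and simply shows $(d_k)_k$ is equivalent to the $c_0$ basis, from which $e_k=\sum_{i\le k}d_i$ gives the summing-basis equivalence immediately. For the lower $c_0$ estimate the paper just cites the standard fact that the difference sequence of a conditional spreading basic sequence is itself Schauder basic (and seminormalized, since $\|d_{2k}\|=\|u_k\|$ and spreading makes all $\|d_k\|$ comparable); for the upper estimate it splits
\[
\Bigl\|\sum_k a_k d_k\Bigr\|\le \Bigl\|\sum_k a_{2k}d_{2k}\Bigr\|+\Bigl\|\sum_k a_{2k-1}d_{2k-1}\Bigr\|,
\]
noting that by spreading each of $(d_{2k})_k$ and $(d_{2k-1})_k$ is equivalent to $(u_k)_k$, hence to the $c_0$ basis.

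Your argument recovers both halves from first principles rather than invoking that black box: the weak-Cauchy analysis together with the shift/functional trick is essentially a proof that the difference sequence is basic (this is where conditionality and $L\ne 0$ enter), and your level-set integration for the upper bound is a more elaborate substitute for the odd/even split. Two minor points: you write ``bimonotonicity'' for the truncation step, but only the Schauder basis constant of $(e_k)$ is available---the argument goes through unchanged with that constant; and the odd/even split the paper uses would save you the sign-splitting and integration.
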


\begin{proof}
Define $d_1 = e_1$ and $d_k = e_k - e_{k-1}$ for $k\geqslant 2$. The fact that $(e_k)_k$ is conditional and spreading implies that $(d_k)_k$ is Schauder basic. We will show that $(d_k)_k$ is equivalent to the unit vector basis of $c_0$, which easily yields the conclusion. Since $(d_k)_k$ is Schauder basic, it dominates the unit vector basis of $c_0$ so it remains to prove that it is dominated by it as well. Note that by the spreading property of $(e_k)_k$, both sequences $(d_{2k})_k$ and $(d_{2k-1})_k$ are equivalent to $(u_k)_k$ and hence also to the unit vector basis of $c_0$. Therefore, if $(a_k)_k$ is a sequence of scalars, finitely many of which are non-zero, then
\begin{equation*}
\left\|\sum_{k=1}^\infty a_kd_k\right\| \leqslant  + \left\|\sum_{k=1}^\infty a_{2k}d_{2k}\right\| + \left\|\sum_{k=2}^\infty a_{2k-1}d_{2k-1}\right\|
\end{equation*}
which implies the desired result.
\end{proof}

\begin{lem}\label{convex block dominated}
Let $(e_k)_k$ be a 1-spreading sequence and $(z_k)_k$ be a convex block sequence of $(e_k)_k$. Then $(z_k)_k$ is 1-dominated by $(e_k)_k$.
\end{lem}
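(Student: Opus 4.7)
The plan is to realize $\sum_k a_k z_k$ as a convex combination of ``selectors'' $\sum_k a_k e_{i_k}$, one from each block $F_k$, and then exploit the $1$-spreading property to transfer the norm to $\sum_k a_k e_k$.

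Concretely, write $z_k = \sum_{i \in F_k} c_i e_i$ with $c_i \geqslant 0$ and $\sum_{i \in F_k} c_i = 1$, where $F_1 < F_2 < \cdots$ are the successive supports. Fix finitely supported scalars $(a_k)_{k=1}^N$. The key identity I would verify is
\begin{equation*}
\sum_{k=1}^N a_k z_k \;=\; \sum_{(i_1,\ldots,i_N)\in F_1\times\cdots\times F_N}\Big(\prod_{j=1}^N c_{i_j}\Big)\Big(\sum_{k=1}^N a_k e_{i_k}\Big),
\end{equation*}
which follows by expanding the right-hand side and using $\sum_{i_j\in F_j} c_{i_j}=1$ to collapse each coordinate other than the $k$-th. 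Since the weights $\prod_j c_{i_j}$ are nonnegative and sum to $1$, this exhibits $\sum_k a_k z_k$ as a genuine convex combination.

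For any tuple $(i_1,\ldots,i_N)\in F_1\times\cdots\times F_N$, the successive-support condition $F_1<\cdots<F_N$ forces $i_1<\cdots<i_N$, so $1$-spreading of $(e_k)_k$ yields
\begin{equation*}
\Big\|\sum_{k=1}^N a_k e_{i_k}\Big\| \;=\; \Big\|\sum_{k=1}^N a_k e_k\Big\|.
\end{equation*}
Applying the triangle inequality to the convex-combination identity then gives $\|\sum_k a_k z_k\|\leqslant \|\sum_k a_k e_k\|$, which is exactly the claimed $1$-domination.

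There is no real obstacle here: the only point requiring any care is checking the combinatorial identity cleanly, and making sure the convention being used is that $1$-spreading means literal equality $\|\sum a_k e_{n_k}\|=\|\sum a_k e_k\|$ for all $n_1<n_2<\cdots$ (as opposed to a one-sided inequality), which is the standard usage in this paper's setting and what makes the triangle-inequality step yield the sharp constant $1$.
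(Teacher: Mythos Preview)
Your proof is correct. The paper takes a slightly different route: rather than expanding into the exponential-size convex combination over all selectors $(i_1,\ldots,i_N)\in F_1\times\cdots\times F_N$, it first normalizes the blocks so that every $z_k$ is written as $\sum_{i=1}^d c_i e_{p_{k,i}}$ with the \emph{same} coefficient vector $(c_i)_{i=1}^d$ for all $k$ (allowing repetitions $p_{k,i}=p_{k,i+1}$, which one obtains by taking a common refinement of the various convex coefficients). One then simply swaps the order of summation to get $\sum_k \la_k z_k = \sum_{i=1}^d c_i\bigl(\sum_k \la_k e_{p_{k,i}}\bigr)$, a convex combination of only $d$ terms, each of norm $\|\sum_k \la_k e_k\|$ by $1$-spreading. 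Your selector expansion is more direct and needs no preprocessing, at the cost of a much larger (though still finite) convex combination; the paper's version trades a small preprocessing trick for a linear-size decomposition. Both arguments deliver the sharp constant $1$.
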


\begin{proof}
Let $n\inn$ and $(\la_k)_{k=1}^n$ be a sequence of scalars. We may assume that there is $d\inn$ and $(c_i)_{i=1}^d$ so that $\sum_{i=1}^dc_i = 1$ and $z_k = \sum_{i=1}^dc_ie_{p_{k,i}}$, where $p_{k,i} < p_{m,j}$ for $k<m$ and $p_{k,i}\leqslant p_{k,j}$ for $i<j$. Then
\begin{equation*}
\left\|\sum_{k=1}^n\la_kz_k\right\| \leqslant \sum_{k=1}^dc_k\max_{1\leqslant i\leqslant d}\left\|\sum_{k=1}^n\la_ke_{p_{k,i}}\right\| = \left\|\sum_{k=1}^n\la_ke_k\right\|.
\end{equation*}
\end{proof}

\begin{lem}\label{absolutely convex block dominated as well}
Let $(e_k)_k$ be a 1-spreading Schauder basis sequence and $(z_k)_k$ be an absolutely convex block sequence of $(e_k)_k$. Then for every $\e > 0$, $(z_k)_k$ has a subsequence which is $(1+\e)$-dominated by $(e_k)_k$.
\end{lem}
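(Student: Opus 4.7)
The plan is to decompose each $z_k$ into its positive and negative parts, thereby reducing to two convex block sequences where Lemma~\ref{convex block dominated} applies, and then to absorb the remaining discrepancy via a perturbation estimate based on the Schauder basis constant of $(e_k)_k$. Concretely, write $z_k=\sum_ic_{k,i}e_{p_{k,i}}$ with $\sum_i|c_{k,i}|\leqslant 1$, let $z_k^+$ collect the terms with $c_{k,i}>0$ and $z_k^-$ be minus the terms with $c_{k,i}<0$, and set $a_k=\sum_{c_{k,i}>0}c_{k,i}$ and $b_k=\sum_{c_{k,i}<0}|c_{k,i}|$. Then $a_k+b_k\leqslant 1$ and $z_k=a_ku_k-b_kv_k$, where $u_k=z_k^+/a_k$ when $a_k>0$ (and is defined arbitrarily as some convex combination supported in $\supp z_k$ otherwise), with $v_k$ defined analogously. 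Both $(u_k)_k$ and $(v_k)_k$ are convex block sequences of $(e_k)_k$, so Lemma~\ref{convex block dominated} yields $\|\sum_k\mu_ku_k\|\leqslant\|\sum_k\mu_ke_k\|$ and the same inequality for $(v_k)_k$, for any scalars $(\mu_k)_k$.

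Next, pass to a subsequence along which $a_k\to a$ and $b_k\to b$ with $a+b\leqslant 1$, and refine further so that $\sum_k|a_k-a|+\sum_k|b_k-b|<\e/(2K)$, where $K$ denotes the Schauder basis constant of $(e_k)_k$; this is arranged by demanding the $k$-th term of each series to be at most $\e/(2^{k+3}K)$. The $1$-spreading property gives $\|e_k\|=\|e_1\|$ for every $k$, and the Schauder basis condition yields $|\lambda_k|\|e_1\|=\|\lambda_ke_k\|\leqslant 2K\|\sum_j\lambda_je_j\|$ for every finitely supported scalar sequence. Writing $y=\sum_k\lambda_ke_k$, the triangle inequality, the domination recalled above (applied with scalars $\lambda_ka_k$ and $\lambda_kb_k$), and the expansions $a_k=a+(a_k-a)$, $b_k=b+(b_k-b)$ give
\begin{align*}
\Big\|\sum_k\lambda_kz_k\Big\|&\leqslant\Big\|\sum_k\lambda_ka_ku_k\Big\|+\Big\|\sum_k\lambda_kb_kv_k\Big\|\\
&\leqslant\Big\|\sum_k\lambda_ka_ke_k\Big\|+\Big\|\sum_k\lambda_kb_ke_k\Big\|\\
&\leqslant(a+b)\|y\|+\|e_1\|\sum_k|\lambda_k|\bigl(|a_k-a|+|b_k-b|\bigr)\\
&\leqslant\Big(a+b+2K\sum_k(|a_k-a|+|b_k-b|)\Big)\|y\|\leqslant(1+\e)\|y\|.
\end{align*}

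The main obstacle is the absence of unconditionality: a $1$-spreading Schauder basic sequence need not be unconditional (as illustrated by the summing basis of $c_0$ appearing in Lemma~\ref{if difference c0 then summing}), so one cannot estimate $\|\sum_k\lambda_k(a_k-a)e_k\|$ simply by $(\sup_k|a_k-a|)\|y\|$. The workaround is the uniform coefficient bound $|\lambda_k|\leqslant 2K\|y\|/\|e_1\|$ coming from the Schauder basis constant, at the cost of settling for $(1+\e)$-domination rather than $1$-domination, and at the cost of passing to a subsequence where $\sum_k(|a_k-a|+|b_k-b|)$ is as small as needed.
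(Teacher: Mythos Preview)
Your proof is correct and follows essentially the same approach as the paper: split each $z_k$ into its positive and negative convex parts, pass to a subsequence along which the total positive and negative masses converge with summable error, apply Lemma~\ref{convex block dominated} to each part, and control the remaining perturbation via the Schauder basis constant. The only cosmetic difference is that you apply Lemma~\ref{convex block dominated} directly with coefficients $\lambda_k a_k$ and $\lambda_k b_k$ and then split off the $(a_k-a)$, $(b_k-b)$ terms inside $\|\sum_k\lambda_k a_k e_k\|$, whereas the paper first peels off the scalars $a_k$, $b_k$ from the convex vectors before invoking the lemma; the resulting estimates are the same.
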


\begin{proof}
If for each $k$, $z_k = \sum_{i\in F_k}c_ie_i$ with $\sum_{i\in F_k}|c_i| = 1$ for all $k\inn$, define $F_k^+ = \{i\in F_k:\;\la_i\geqslant 0\}$, $F_k^- = F_k\setminus F_k^+$ and $\la_k^+ = \sum_{i\in F_k^+} \la_i$, $\la_k^- = \sum_{i\in F_k^-}\la_i$. We fix $\de > 0$ and, by passing to a subsequence, there are $\la^+$, $\la^-$ with $\sum_k|\la_k^+-\la^+| < \de$ and $\sum_k|\la_k^--\la^-| < \de$. Note that $|\la^+| + |\la^-| = 1$. We shall assume that $\la^+\neq 0$ as well as $\la^-\neq 0$, as the other cases are treated similarly. We may pass to a further subsequence so that for all $k$, $\la_k^+\neq 0$ and $\la_k^-\neq 0$ and so we may define the vectors $z_k^+ = (1/\la_k^+)\sum_{i\in F_k^+}\la_ie_i$, $z_k^- = (1/\la_k^-)\sum_{i\in F_k^-}\la_ie_i$. Observe that $(z_k^+)_k$ and $(z_k^-)_k$ are both convex block sequences of $(e_i)_i$. Then if $(a_k)_{k=1}^n$ is a sequence of scalars:
\begin{eqnarray*}
\left\|\sum_{k=1}^na_kz_k\right\| &\leqslant& \left\|\sum_{k=1}^na_k\la_k^+z_k^+\right\| + \left\|\sum_{k=1}^na_k\la_k^-z_k^-\right\|\\
&\leqslant & |\la^+|\left\|\sum_{k=1}^na_kz_k^+\right\| + |\la^-|\left\|\sum_{k=1}^na_kz_k^+\right\| + 2\de\max|a_k|\\
&\leqslant & |\la^+|\left\|\sum_{k=1}^na_ke_k\right\| + |\la^-|\left\|\sum_{k=1}^na_ke_k\right\| + 2\de\max|a_k|\\
&=& \left\|\sum_{k=1}^na_ke_k\right\| + 2\de\max|a_k|
\end{eqnarray*}
where the third inequality follows from Lemma \ref{convex block dominated}. For $\de < \e/(4C)$, where $C$ is the basis constant of $(e_k)_k$, the result follows.
\end{proof}

\begin{prp}\label{ell1 and non ell1 add up to ell1}
Let $X$ be a Banach space and $(x_k)_k$, $(y_k)_k$ be Schauder basic sequences in $X$. If $(x_k)_k$ admits an $\ell_1$ spreading model while $(y_k)_k$ does not, then $(x_k - y_k)_k$ admits an $\ell_1$ spreading model.
\end{prp}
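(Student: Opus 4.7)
The plan is to argue by contradiction. Suppose that $(x_k-y_k)_k$ also fails to admit $\ell_1$ as a spreading model; I will derive a contradiction from the fact that $(x_k)_k$ does admit one.

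The first step relies on Rosenthal's $\ell_1$ theorem. If a bounded sequence has a subsequence equivalent to the unit vector basis of $\ell_1$, then any further subsequence generating a spreading model must produce an $\ell_1$ spreading model, since the $\ell_1$ upper and lower estimates pass to the limit. Hence a bounded sequence that does not admit an $\ell_1$ spreading model has no subsequence equivalent to $\ell_1$, and so by Rosenthal's theorem has a weakly Cauchy subsequence. Applying this both to $(y_k)_k$ (by hypothesis) and to $(x_k-y_k)_k$ (by the contradiction hypothesis) and taking a common refinement, I may assume both sequences are weakly Cauchy. Their sum $(x_k)_k$ is then also weakly Cauchy along this subsequence.

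Now pass to a further subsequence, still denoted $(x_k)_k$, along which $(x_k)_k$ generates $\ell_1$ as a spreading model with some constant $\theta>0$; this preserves weak Cauchy-ness. Set $u_n=x_{2n}-x_{2n-1}$. Since $(x_k)_k$ is weakly Cauchy, $(u_n)_n$ is weakly null.

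The heart of the argument is to show that $(u_n)_n$ has a subsequence equivalent to the $\ell_1$ basis. Writing
\[
\sum_{n=1}^N a_n u_n \;=\; \sum_{j=1}^{2N} b_j x_j,\qquad b_{2n}=a_n,\ b_{2n-1}=-a_n,
\]
one has $\sum_j|b_j|=2\sum_n|a_n|$, so the $\ell_1$ spreading-model estimate for $(x_k)_k$ forces $\|\sum_n a_n u_n\|$ to be arbitrarily close to $2\theta\sum_n|a_n|$ once the indices involved are sufficiently far out. A standard diagonal extraction (analogous to the compactness of the space of spreading models) then yields a subsequence $(u_{n_k})_k$ satisfying $\|\sum_i a_i u_{n_{k_i}}\|\geq\theta\sum_i|a_i|$ uniformly, i.e., equivalent to the unit basis of $\ell_1$. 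But such a sequence cannot be weakly null: by Hahn-Banach the functional sending $u_{n_k}\mapsto 1$ for every $k$ extends to some $x^*\in X^*$, whence $x^*(u_{n_k})=1\not\to 0$. This contradicts $(u_{n_k})_k$ being weakly null, completing the argument.

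The main technical point is the bookkeeping between three closely related notions: ``admits $\ell_1$ as a spreading model'', ``has an $\ell_1$-equivalent subsequence'', and ``has a subsequence generating $\ell_1$ as a spreading model''. Standard diagonal extractions show these coincide for bounded sequences, and this coincidence is precisely what allows Rosenthal's dichotomy to be invoked here.
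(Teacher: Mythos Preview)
Your argument has a genuine gap at the crucial step, and the final paragraph actually states a falsehood: the notions ``admits $\ell_1$ as a spreading model'' and ``has a subsequence equivalent to the $\ell_1$ basis'' do \emph{not} coincide for bounded sequences. The unit vector basis of Tsirelson space is the standard counterexample: it generates an $\ell_1$ spreading model (indeed $\|\sum_{i=1}^N a_i e_{k_i}\|_T \geq \tfrac{1}{2}\sum|a_i|$ whenever $N\leq k_1<\cdots<k_N$), yet Tsirelson space contains no copy of $\ell_1$. Your ``standard diagonal extraction'' only produces a subsequence with the asymptotic estimate $\|\sum_{i=1}^N a_i u_{n_{k_i}}\|\geq\theta\sum|a_i|$ for $N\leq k_1<\cdots<k_N$; the threshold on $k_1$ grows with $N$ and cannot be removed. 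Concretely, run your argument in Tsirelson with $(x_k)=(e_k)$: it is weakly null (hence weakly Cauchy) and generates $\ell_1$ as spreading model, but $u_n=e_{2n}-e_{2n-1}$ has no $\ell_1$ subsequence, so your step~6 fails outright.

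The paper avoids this pitfall by working entirely inside the spreading models. After passing to a common subsequence so that $(x_k)$, $(y_k)$, $(x_k-y_k)$ generate spreading models $(z_k)$, $(w_k)$, $(u_k)$, the point is that since $(w_k)$ is $1$-spreading and not $\ell_1$, there is an absolutely convex block $\sum\lambda_kw_k$ of norm less than $c/2$, where $c$ is the lower $\ell_1$ constant of $(z_k)$. Copying this block pattern along $(u_k)$ gives vectors $d_n$ with $\|\sum c_nd_n\|\geq c\sum|c_n|-\tfrac{c}{2}\sum|c_n|$ via a triangle inequality computation with the corresponding blocks of $(z_k)$ and $(w_k)$. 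Then the lemma that an absolutely convex block sequence of a spreading Schauder basic sequence is (almost) $1$-dominated by it forces $(u_k)$ itself to be $\ell_1$. Nowhere does one need to realize $\ell_1$ inside $X$; the entire argument lives in the spreading-model space, which is exactly what your approach cannot do.
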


\begin{proof}
We pass to a subsequence so that $(x_k)_k$, $(y_k)_k$ and $(x_k - y_k)_k$ generate spreading models $(z_k)$, $(w_k)_k$ and $(u_k)_k$ respectively. We will show that $(u_k)_k$ is equivalent to the unit vector basis of $\ell_1$. According to the assumption, $(w_k)_k$ is not equivalent to the unit vector basis of $\ell_1$ and so we may choose an absolutely convex block vector $w = \sum_{k=1}^p\la_k w_k$ with $\|w\| < c/2$, where $c>0$ is such that $(z_k)_k$ $c$-dominates the unit vector basis of $\ell_1$. Since $(w_k)_k$ is 1-spreading, by copying the vector $w$,  we may find successive convex block vector $\sum_{k\in F_n}\la_kw_k$, all of which have norm strictly smaller that $c/2$. For $n\inn$ define $d_n = \sum_{k\in F_n}\la_k u_k$. Then $(d_n)_n$ is an absolutely convex block sequence of $(u_k)_k$ and we will show that it is equivalent to the unit vector basis of $\ell_1$. Lemma \ref{absolutely convex block dominated as well} will yields that $(u_k)_k$ is equivalent to the unit vector basis of $\ell_1$ as well, which is the desired result. Indeed, let $(c_n)_{n=1}^m$ be a sequence of scalars. Then
\begin{eqnarray*}
%\begin{gather*}
%\begin{split}
\left\|\sum_{n=1}^mc_nd_n\right\| &=& \lim_{i}\left\|\sum_{n=1}^mc_n\left(\sum_{k\in F_n}\la_k(x_{k+i} - y_{k+i})\right)\right\|\\
&\geqslant&  \lim_{i}\left\|\sum_{n=1}^mc_n\left(\sum_{k\in F_n}\la_kx_{k+i}\right)\right\|\\
&& - \lim_{i}\left\|\sum_{n=1}^mc_n\left(\sum_{k\in F_n}\la_ky_{k+i}\right)\right\|\\
&=& \left\|\sum_{n=1}^mc_n\left(\sum_{k\in F_n}\la_kz_{k}\right)\right\| - \left\|\sum_{n=1}^mc_n\left(\sum_{k\in F_n}\la_kw_{k}\right)\right\|\\
&\geqslant&  c\sum_{n=1}^m|c_n| - \frac{c}{2}\sum_{n=1}^m|c_n| =  \frac{c}{2}\sum_{n=1}^m|c_n|
%\end{split}
%\end{gather*}
\end{eqnarray*}
and the proof is complete.
\end{proof}

\begin{prp}\label{non bc is summing}
Let $(\la_i)_i$ be a sequence of scalars so that if $(e_i)_i$ is the basis of $\X$ and $x_k = \sum_{i=1}^k\la_ie_i$ for all $k\inn$, then $(x_k)_k$ is bounded and non-convergent in the norm topology. Then $(x_k)_k$ admits only the summing basis of $c_0$ as a spreading model.
\end{prp}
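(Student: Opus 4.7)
The strategy is to reduce to Lemma \ref{if difference c0 then summing} by showing that any spreading model $(\tilde e_m)_m$ of $(x_k)_k$ is spreading (automatic), conditional, and that the odd-even differences $u_k=\tilde e_{2k-1}-\tilde e_{2k}$ are equivalent to the unit basis of $c_0$.

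First I would pass to a subsequence $(x_{k_m})_m$ that both generates a spreading model $(\tilde e_m)_m$ and satisfies $\|x_{k_{m+1}}-x_{k_m}\|\geqslant\varepsilon$ for some $\varepsilon>0$. Such a subsequence exists because $(x_k)_k$ is bounded but not Cauchy. Setting $y_m=x_{k_{m+1}}-x_{k_m}$, the block sequence $(y_m)_m$ is seminormalized and its partial sums telescope: $\sum_{m=1}^M y_m=x_{k_{M+1}}-x_{k_1}$, so $\|\sum_{m=1}^M y_m\|\leqslant 2\sup_k\|x_k\|$ for every $M$. Lemma \ref{on bounded sums index is zero} gives $\al((y_m)_m)=0$, and after normalizing (which preserves the $\al$-index zero property since $\|y_m\|$ stays away from both $0$ and $\infty$) and invoking Corollary \ref{sm vs adx}, a further subsequence of $(y_m)_m$ generates a spreading model which, upon reintroducing the scaling $\gamma=\lim\|y_m\|>0$, is $\gamma$ times the $c_0$ unit basis. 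Standard Brunel--Sucheston identification (take well-spaced indices) shows that this spreading model coincides with $(\tilde e_{2m}-\tilde e_{2m-1})_m$, so $(u_k)_k$ is equivalent to the $c_0$ unit basis.

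For conditionality I would use the bimonotonicity of the basis of $\X$. For any $m_1<\cdots<m_n$, I would expand
\[
\sum_{j=1}^n x_{k_{m_j}}=n\,x_{k_{m_1}}+\sum_{l=2}^n(n-l+1)\bigl(x_{k_{m_l}}-x_{k_{m_{l-1}}}\bigr),
\]
which is a linear combination over the successive block family $\bigl(x_{k_{m_1}},\,x_{k_{m_2}}-x_{k_{m_1}},\,\dots,\,x_{k_{m_n}}-x_{k_{m_{n-1}}}\bigr)$. Bimonotonicity yields
$\|\sum_{j=1}^n x_{k_{m_j}}\|\geqslant(n-1)\|x_{k_{m_2}}-x_{k_{m_1}}\|$, and passing to the spreading-model limit gives $\|\sum_{m=1}^n\tilde e_m\|\geqslant(n-1)\|\tilde e_2-\tilde e_1\|=(n-1)\gamma$, which tends to infinity with $n$. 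On the other hand, $\|\sum_{k=1}^n u_k\|$ stays bounded by the previous paragraph. These two facts are incompatible with $(\tilde e_m)_m$ being unconditional, so it is conditional, and Lemma \ref{if difference c0 then summing} identifies it with the summing basis of $c_0$.

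The step I expect to be the main obstacle is the very first one: extracting a subsequence along which consecutive differences are bounded below in norm. Non-Cauchyness only supplies large differences between certain pairs of indices, and a careful recursive selection is required to guarantee $\|x_{k_{m+1}}-x_{k_m}\|\geqslant\varepsilon$ along the extracted sequence. Once this seminormalization is secured, the bounded-partial-sums property is automatic from telescoping, and the rest of the argument is a clean application of the $\al$-index machinery of Section \ref{section index} combined with bimonotonicity.
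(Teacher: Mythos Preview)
Your approach is correct and follows the same skeleton as the paper's: show that the odd--even differences of the spreading model are equivalent to the $c_0$ basis via the $\al$-index (Lemma~\ref{on bounded sums index is zero} and Proposition~\ref{if al zero}), then invoke Lemma~\ref{if difference c0 then summing}. Two points of comparison are worth making.

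First, the obstacle you flag is not one. If $(x_k)_k$ fails to be Cauchy with parameter $\varepsilon$, then for every index $k_0$ there exists $k>k_0$ with $\|x_k-x_{k_0}\|>\varepsilon/2$ (otherwise the tail would be $\varepsilon$-Cauchy by the triangle inequality), and the desired subsequence is built recursively. The paper avoids this step altogether: after passing to a subsequence generating the spreading model $(z_k)_k$, it sets $y_k=x_{2k-1}-x_{2k}$ and observes that the \emph{consecutive} differences $x_k-x_{k-1}$ telescope to a bounded sequence, hence have $\al$-index zero; $(y_k)_k$ is a subsequence of those up to sign and inherits $\al$-index zero. Seminormalization then comes for free from $\|y_k\|\to\|z_1-z_2\|>0$.

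Second, for conditionality the paper takes a shorter route: since the basis is shrinking, the bounded non-convergent sequence $(x_k)_k$ is non-trivially weakly Cauchy, and the spreading model of any such sequence is either equivalent to the $\ell_1$ basis or is a conditional spreading Schauder basic sequence; the $c_0$ behaviour of $(u_k)_k$ rules out $\ell_1$. Your bimonotonicity argument is a legitimate and more hands-on alternative, but note that you still need $(\tilde e_m)_m$ to be Schauder basic before Lemma~\ref{if difference c0 then summing} applies, and it is precisely the non-trivially-weakly-Cauchy observation that delivers this.
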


\begin{proof}
Pass to a subsequence of $(x_k)_k$ that generates a spreading model $(z_k)_k$. The fact that $(x_k)_k$ is non-trivial weakly easily implies that $(z_k)_k$ is either equivalent to the unit vector basis of $\ell_1$, or a conditional spreading sequence. Then, if $y_k = x_{2k-1} - x_{2k}$ and $u_k = z_{2k-1} - z_{2k}$ for all $k\inn$, the sequence $(y_k)_k$ generates $(u_k)_k$ as a spreading model. Lemma \ref{on bounded sums index is zero} implies that $\adyk = 0$ and hence by Proposition \ref{if al zero}, $(u_k)_k$ is equivalent to the unit vector basis of $c_0$. Therefore, $(z_k)_k$ is conditional and spreading and by Lemma \ref{if difference c0 then summing} we deduce the desired result.
\end{proof}

\begin{rmk}\label{you cannot kill the summing basis}
Note that the summing basis norm is the minimum conditional spreading norm, in terms of domination. An argument similar to that used in the proof of Lemma \ref{convex block dominated} yields the following: if $(x_k)_k$ is a sequence generating the summing basis of $c_0$ as a spreading model, then every convex block sequence of $(x_k)_k$ admits only the summing basis of $c_0$ as a spreading model as well.
\end{rmk}

The next will be useful in the sequel.

\begin{lem}\label{summing on convex block}
Let $(x_k)_k$ be a non-trivial weakly Cauchy sequence in $\X$. Then there is a convex block sequence $(y_k)_k$ of $(x_k)_k$ that generates the summing basis of $c_0$ as a spreading model.
\end{lem}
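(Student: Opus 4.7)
The plan is to pass to a subsequence of $(x_k)$, which is itself a trivial convex block of the original, and argue in the spirit of Proposition~\ref{non bc is summing} that it generates the summing basis of $c_0$ as a spreading model. The main ingredient is a sliding-hump reduction: since the basis of $\X$ is shrinking (Proposition~\ref{shrinking}), finite-rank projections send weakly Cauchy sequences to norm-Cauchy ones, and since $(x_k)$ is non-trivial weakly Cauchy it admits no norm-Cauchy subsequence (else it would be weakly convergent), so in particular one can arrange $\|x_{k+1}-x_k\|$ to be bounded below by passing to a subsequence. Combining these observations with Brunel--Sucheston in a standard diagonal argument, I may pass to a subsequence of $(x_k)$, still denoted $(x_k)$, that is Schauder basic, generates a spreading model $(z_k)$, and whose consecutive differences $w_k=x_{k+1}-x_k$ form a seminormalized block sequence in $\X$.

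Once this setup is achieved, the partial sums telescope to $\sum_{k=1}^{\ell}w_k=x_{\ell+1}-x_1$, which is uniformly bounded. Lemma~\ref{on bounded sums index is zero} therefore gives $\adwk=0$, and Corollary~\ref{sm vs adx} implies that every spreading model generated by a subsequence of $(w_k)$ is equivalent to the unit vector basis of $c_0$. Applied to the subsequence $(w_{2k-1})_k=(x_{2k}-x_{2k-1})_k$, together with the fact that $(x_k)$ generates $(z_k)$, this yields that $(z_{2k}-z_{2k-1})_k$ is equivalent to the unit vector basis of $c_0$.

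It remains to show $(z_k)$ is conditional. If $(z_k)$ were $1$-unconditional, then for every $n$ we would have $\|\sum_{i=1}^{2n}z_i\|=\|\sum_{i=1}^{2n}(-1)^iz_i\|=\|\sum_{k=1}^{n}(z_{2k}-z_{2k-1})\|$, and the last quantity is uniformly bounded by the $c_0$-estimate from the previous paragraph. Hence $\frac{1}{n}\sum_{i=1}^{n}z_i\to 0$ in norm, and by the spreading-model property the convex averages $\frac{1}{n}\sum_{i=1}^{n}x_{k_i}$ along sufficiently sparse subsequences also tend to $0$ in norm; passing to the weak$^*$-limit in $\X^{**}$ forces the weak$^*$-limit of $(x_k)$ to be $0$, i.e.\ $(x_k)$ is weakly null, contradicting non-triviality. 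Thus $(z_k)$ is conditional spreading, and Lemma~\ref{if difference c0 then summing} yields that $(z_k)$ is equivalent to the summing basis of $c_0$. Consequently the subsequence $(x_k)$, viewed as a trivial convex block of the original, generates the summing basis of $c_0$ as a spreading model. The main technical point is the sliding-hump reduction of the first paragraph (particularly ensuring that the difference sequence is genuinely block and seminormalized, so that Lemma~\ref{on bounded sums index is zero} applies); the rest is a direct use of machinery already developed in the paper.
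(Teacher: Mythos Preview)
Your argument is correct, modulo two small points: the differences $w_k=x_{k+1}-x_k$ are not literally a block sequence but only a small perturbation of one (you acknowledge this, and the perturbed block sequence inherits bounded partial sums, so Lemma~\ref{on bounded sums index is zero} and Corollary~\ref{sm vs adx} apply to it and transfer back); and in the conditionality step you should assume $(z_k)$ is $C$-unconditional for some $C$ rather than $1$-unconditional---the same inequality $\|\sum_{i=1}^{2n}z_i\|\leqslant C\|\sum_{i=1}^{2n}(-1)^iz_i\|$ still gives boundedness of the partial sums and the contradiction goes through unchanged.

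Your route is genuinely different from the paper's. The paper sets $y_k=\sum_{i=1}^k x^{**}(e_i^*)e_i$ using the $w^*$-limit $x^{**}$, applies Proposition~\ref{non bc is summing} directly to $(y_k)$, observes that $(x_k-y_k)$ is weakly null, and then uses Mazur's theorem to produce a convex block of $(x_k)$ equivalent to a convex block of $(y_k)$, finishing with Remark~\ref{you cannot kill the summing basis}. You bypass the auxiliary sequence $(y_k)$ and Mazur entirely, working intrinsically with differences and effectively reproving the content of Proposition~\ref{non bc is summing} for the given sequence; as a bonus you obtain a \emph{subsequence} rather than merely a convex block. The paper's proof is shorter because it recycles Proposition~\ref{non bc is summing} wholesale, while yours is more self-contained and delivers the slightly sharper conclusion.
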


\begin{proof}
Let $x^{**}$ be the $w^*$-limit of $(x_k)_k$ and $y_k = \sum_{i=1}^kx^{**}(e_i^*)e_i$. Then by Proposition \ref{shrinking} $(y_k)_k$, $w^*$-converges to $x^{**}$. By Lemma \ref{non bc is summing}, passing to a subsequence, $(y_k)_k$ generates the summing basis of $c_0$ as a spreading model. As $(x_k - y_k)_k$ is weakly null, by Mazur's theorem there is a convex block sequence of $(x_k)_k$ that is equivalent to a convex block sequence of $(y_k)_k$. By Remark \ref{you cannot kill the summing basis} we deduce the desired result.
\end{proof}

\begin{prp}
Every non-trivial weakly Cauchy sequence in $\X$ admits a spreading model which is either equivalent to the summing basis of $c_0$ or equivalent to the unit vector basis of $\ell_1$. If moreover $\T = \mathcal{U}$, then every infinite dimensional subspace of $\mathfrak{X}_{\mathcal{U}}$ contains non-trivial weakly Cauchy sequences admitting both of these types of spreading models.
\end{prp}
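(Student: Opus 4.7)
The plan is to reduce the main assertion to the dichotomy of Corollary \ref{sm vs adx} via a decomposition argument that mimics the proofs of Proposition \ref{non bc is summing} and Proposition \ref{ell1 and non ell1 add up to ell1}. Given a non-trivial weakly Cauchy $(x_k)_k$ in $\X$, I first pass to a subsequence generating a spreading model $(z_k)_k$ and invoke the standard dichotomy: $(z_k)_k$ is either equivalent to the unit vector basis of $\ell_1$ (which yields the conclusion immediately) or is a conditional spreading sequence, by the same observation opening the proof of Proposition \ref{non bc is summing}, which rests on $(x_k)_k$ being bounded and non-convergent.

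In the conditional case I claim $(z_k)_k$ is equivalent to the summing basis of $c_0$. Let $x^{**} \in \X^{**}$ be the weak-star limit of $(x_k)_k$ and set $y_k = \sum_{i=1}^k x^{**}(e_i^*) e_i$. By Proposition \ref{shrinking} the basis of $\X$ is shrinking, hence $(y_k)_k$ is bounded and weak-star converges to $x^{**}$; it cannot converge in norm (else $(x_k)_k$ would converge weakly to an element of $\X$). Proposition \ref{non bc is summing} then says $(y_k)_k$ admits only the summing basis of $c_0$ as a spreading model. Next, $(x_k - y_k)_k$ is weakly null, and Proposition \ref{ell1 and non ell1 add up to ell1} applied to the pair $((x_k - y_k)_k, (-y_k)_k)$ shows that if $(x_k - y_k)_k$ admitted $\ell_1$ as a spreading model then so would $(x_k)_k$, contradicting conditionality of $(z_k)_k$. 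Hence $(x_k - y_k)_k$ does not admit $\ell_1$, and passing to a subsequence that is a small perturbation of a block sequence (possible since the basis is shrinking), Corollary \ref{sm vs adx} forces its spreading model to be $c_0$-equivalent. Choosing a common subsequence along which all three of $(x_k)_k, (y_k)_k$ and $(x_k - y_k)_k$ generate spreading models and using the triangle inequality on spreading model norms, I conclude $(z_k)_k$ is dominated by a constant multiple of the summing $c_0$ norm; combined with Remark \ref{you cannot kill the summing basis}, which gives the reverse domination from the conditionality of $(z_k)_k$, the two norms are equivalent.

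For the ``moreover'' clause I fix an infinite-dimensional subspace $X$ of $\mathfrak{X}_\mathcal{U}$ and, after a standard perturbation argument, assume $X$ is generated by a block sequence with rational coefficients. Lemma \ref{maximal dependent} with $X = Y$, together with the fact that every maximal chain of $\mathcal{U}$ is infinite, produces an infinite sequence of exact pairs $\{(f_k, x_k)\}_{k=1}^\infty$ inside $X$. Proposition \ref{dependent sum bounded} bounds the partial sums $s_n = \sum_{k=1}^n x_k$ uniformly by $27$, while $\|x_k\| \geqslant 8/9$ prevents $(s_n)_n$ from being norm-convergent; observing that $f_k(s_n) = 8/9$ for $n \geqslant k$ and that $(f_k)_k$ is weakly null in $\X^*$ (successive supports plus shrinking basis) shows $(s_n)_n$ has no weakly convergent subsequence, so it is non-trivial weakly Cauchy, and Proposition \ref{non bc is summing} applied to $(s_n)_n$ viewed as partial sums along the basis gives summing $c_0$ as its spreading model. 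For a sequence with $\ell_1$ spreading model, pick by Proposition \ref{all exist (ooooh)} a normalized block sequence $(z_k)_k$ in $X$ generating $\ell_1$; then $(z_k + s_k)_k \subset X$ is non-trivial weakly Cauchy (weakly null plus non-trivial weakly Cauchy), and Proposition \ref{ell1 and non ell1 add up to ell1} applied to $(z_k)_k$ and $(-s_k)_k$ shows that it admits $\ell_1$ as a spreading model.

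The hard part will be the deduction that $(x_k - y_k)_k$ has a $c_0$ spreading model in the conditional case, since Corollary \ref{sm vs adx} only gives a dichotomy between $c_0$-equivalence and ``$\ell_1^n$ for every $n$'', a property formally weaker than the $\ell_1$-equivalence hypothesis of Proposition \ref{ell1 and non ell1 add up to ell1}. Closing this gap---either by sharpening the constants in Proposition \ref{if al positive} via a further subsequence extraction to a uniform lower bound, or by exploiting the conditional structure of $(z_k)_k$ directly to exclude asymptotic $\ell_1$ behavior of $(x_k - y_k)_k$---is where I expect the main technical work to lie.
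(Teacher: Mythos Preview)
Your approach is essentially the same as the paper's: decompose $x_k$ via the auxiliary sequence $y_k = \sum_{i=1}^k x^{**}(e_i^*)e_i$ and run the dichotomy of Corollary~\ref{sm vs adx} on the weakly null difference, then invoke Proposition~\ref{ell1 and non ell1 add up to ell1} in the $\ell_1$ case. The paper organizes the case split directly on $(y_k - x_k)_k$, while you first fix the spreading model of $(x_k)_k$ and argue by contradiction, but the ingredients are identical. Your treatment of the ``moreover'' clause simply unpacks the references the paper cites (Theorem~\ref{no reflexive subspace}, Proposition~\ref{non bc is summing}, Propositions~\ref{all exist (ooooh)} and~\ref{ell1 and non ell1 add up to ell1}).

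The ``hard part'' you flag is not a real gap. Look again at the precise statement of Proposition~\ref{if al positive}: the lower estimate $\|\sum_{m\in F}c_m x_{k_m}\|\geqslant (\theta/2^n)\sum_{m\in F}|c_m|$ holds for every $\mathcal{S}_n$-admissible family. Take $n=1$. Since $\min\supp x_{k_m}\to\infty$, for any fixed $d$ every choice of $d$ indices far enough out is $\mathcal{S}_1$-admissible, so the subsequence satisfies a uniform $\ell_1$ lower bound with constant $\theta/2$ on all sufficiently late finite sets. This is exactly a genuine $\ell_1$ spreading model, so the hypothesis of Proposition~\ref{ell1 and non ell1 add up to ell1} is met without any extra work. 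The paper uses this implicitly when it passes from Corollary~\ref{sm vs adx} to ``admits the unit vector basis of $\ell_1$ as a spreading model''.

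One small point in your second part: from ``no weakly convergent subsequence'' you jump to ``non-trivial weakly Cauchy''. You need Rosenthal's $\ell_1$ theorem (valid here since the basis is shrinking, so $\ell_1\not\hookrightarrow\X$) to first extract a weakly Cauchy subsequence; only then is it non-trivial. This is routine but should be said.
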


\begin{proof}
Let $(x_k)_k$ be a non-trivial weakly Cauchy sequence in $\X$ and $x^{**}$ be its $w^*$-limit. If for $k\inn$ we set $y_k = \sum_{i=1}^kx^{**}(e_i^*)e_i$, By proposition \ref{shrinking} we obtain that $(y_k)_k$ $w^{*}$-converges to $x^{**}$ and hence, setting $z_k = y_k - x_k$, the sequence $(z_k)_k$ is weakly null. By Proposition \ref{non bc is summing} $(y_k)_k$ admits only the summing basis of $c_0$ as a spreading model, while $(z_k)_k$ is either norm null, or it is not. If it is norm null then clearly $(x_k)_k$ admits only the summing basis of $c_0$ as a spreading model. Otherwise, it follows from Proposition \ref{sm vs adx} that $(z_k)_k$ either admits only the unit vector basis of $c_0$ as a spreading model, or it admits the unit vector basis of $\ell_1$ as a spreading model. If the first one holds, we conclude that any spreading model admitted by $(x_k)_k$ must be equivalent to the unit vector basis of $c_0$ and if the second one holds, Proposition \ref{ell1 and non ell1 add up to ell1} yields that $(x_k)_k$ admits an $\ell_1$ spreading model.

The second assertion is proved as follows: by Theorem \ref{no reflexive subspace}, and Proposition \ref{non bc is summing} we obtain that every subspace of $\mathfrak{X}_{\mathcal{U}}$ admits the summing basis of $c_0$ as a spreading model. Combining this with Propositions \ref{all exist (ooooh)} and \ref{ell1 and non ell1 add up to ell1} we deduce that there is a non-trivial weakly Cauchy sequence in every subspace generating an $\ell_1$ spreading model.
\end{proof}

\begin{rmk}
We comment that using the $\al$-index it can be shown that every non-trivial weakly Cauchy sequence in $\X$ admitting an $\ell_1$ spreading model, has a subsequence that generates an $\ell_1^n$ spreading model with lower constant $\theta/2^n$, for all $n\inn$ and some $\theta>0$.
\end{rmk}

%{\color{red}\bf Is this necessary?}
The result bellows summarizes the main results concerning spreading models admitted by the space $\X$, depending on the choice of $\T$.

\begin{prp}
Every seminormalized weakly null sequence in $\X$ admits either $\ell_1$ or $c_0$ as a spreading model and every non-trivial weakly Cauchy sequence in $\X$ admits either $\ell_1$ or the summing basis of $c_0$ as a spreading model. In particular the following hold.
\begin{itemize}

\item[(i)] If $\T$ is well founded (i.e. the space $\X$ is reflexive), then every Schauder basic sequence in $\X$ admits either $\ell_1$ or $c_0$ as a spreading model and both of these types are admitted by every infinite dimensional subspace.

\item[(ii)] If $\T = \mathcal{U}$,  then every Schauder basic sequence in $\X$ admits either $\ell_1$, either $c_0$, or the summing basis of $c_0$ as a spreading model and all three of these types are admitted by every infinite dimensional subspace.

\end{itemize}
\end{prp}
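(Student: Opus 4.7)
The proposition is essentially a packaging of previously established facts. For the first sentence, suppose $(x_k)_k$ is a seminormalized weakly null sequence in $\X$. By the standard Bessaga--Pe\l czy\'nski perturbation argument, the fact that $\X$ has a bimonotone Schauder basis yields a subsequence equivalent to a normalized block sequence $(y_k)_k$. Corollary \ref{sm vs adx} then dichotomizes: either $\al((y_k)_k) > 0$, in which case a further subsequence generates $\ell_1^n$ spreading models, or $\al((y_k)_k) = 0$, in which case a further subsequence generates a $c_0$ spreading model. Equivalence of sequences preserves the admitted spreading models, so the same dichotomy applies to $(x_k)_k$. The non-trivial weakly Cauchy statement is exactly the first assertion of the preceding proposition, providing either $\ell_1$ or the summing basis of $c_0$.

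To upgrade these two cases to arbitrary Schauder basic sequences, I would invoke Rosenthal's $\ell_1$ theorem: any bounded sequence in a Banach space has a subsequence that is either equivalent to the unit vector basis of $\ell_1$ (which trivially admits $\ell_1$ as its spreading model) or weakly Cauchy. A weakly Cauchy Schauder basic sequence is either weakly null or non-trivial weakly Cauchy, and each case is covered by the previous paragraph. This gives the overall list $\{\ell_1,\,c_0,\,\text{summing basis of }c_0\}$ of possible spreading models, uniformly in $\T$.

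For part (i), Proposition \ref{if well founded then reflexive} asserts that $\X$ is reflexive when $\T$ is well founded; in particular, no non-trivial weakly Cauchy sequence exists, so the summing basis option is excluded and only $\ell_1$ and $c_0$ remain. That each of these is realized in every infinite-dimensional subspace follows from Proposition \ref{all exist (ooooh)} for block subspaces, combined with a further Bessaga--Pe\l czy\'nski reduction from a general infinite-dimensional subspace to a block subspace. For part (ii), when $\T = \mathcal{U}$, Proposition \ref{all exist (ooooh)} again supplies the $\ell_1$ and $c_0$ spreading models in every block (hence every infinite-dimensional) subspace, while the second assertion of the preceding proposition directly guarantees that every infinite-dimensional subspace of $\mathfrak{X}_{\mathcal{U}}$ contains a non-trivial weakly Cauchy sequence generating the summing basis of $c_0$.

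No substantial obstacle presents itself: the only point requiring care is the perturbation bookkeeping used to pass from general seminormalized or Schauder basic sequences to block sequences so that Corollary \ref{sm vs adx} and Proposition \ref{all exist (ooooh)} can be applied. This is entirely standard and the remainder of the proof is a straightforward cross-reference between the results already assembled in the paper.
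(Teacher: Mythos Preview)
Your proposal is correct and matches the paper's intent: this proposition is presented there as a summary result with no explicit proof, so the argument must be assembled from the preceding material exactly as you describe. One minor simplification is available: since the basis of $\X$ is shrinking (Proposition \ref{shrinking}), $\ell_1$ does not embed into $\X$, so the $\ell_1$ alternative in Rosenthal's theorem never actually occurs and every bounded sequence has a weakly Cauchy subsequence---but your formulation is not wrong, merely slightly redundant.
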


\section{Operators on the space $\X$}

In this final section we prove the properties of the operators defined on subspaces of $\X$. We characterize strictly singular operators with respect to their action on sequences generating certain types of spreading models. We conclude that the composition of any pair of singular operators is a compact one. This ought to be compared to \cite[Theorem 5.19 and Remark 5.20]{AM1}. We also show that all operators defined on block subspaces of $\X$ non-trivial closed invariant subspaces and that operators defined on $\mathfrak{X}_\mathcal{U}$ are strictly singular if and only if they are weakly compact.

\begin{lem}\label{just a technicallity}
Let $x$, $y$ be non-zero vectors in $\X$. Then there exist non-averages $f$, $g$ in $\WT$ so that the following hold:
\begin{itemize}

\item[(i)] $\ran f\subset \ran x$ and $\ran g\subset \ran y$,

\item[(ii)] $f(x) > (8/9)\|x\|$ and $g(y) > (8/9)\|y\|$,

\item[(iii)] $\displaystyle{\left|g\left(\frac{8}{9f(x)}x\right)\right| \leqslant 8/9}$.

\end{itemize}
\end{lem}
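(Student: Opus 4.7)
The plan is to choose $g$ first (norming $y$), then to choose $f$ according to how $g$ already behaves on $x$. The whole argument rests on the following observation, which I will establish first: for every non-zero $z\in\X$ and every $\theta<1$, there is a non-average $h\in\WT$ with $\ran h\subset\ran z$ and $h(z)>\theta\|z\|$. By Remark \ref{basics of norming}(i), every $\alpha\in\WT$ is either a non-average (basis element or weighted functional) or an \ac-average $\alpha=(1/n)\sum_{i=1}^d\e_ih_i$ whose constituents $h_i$ are weighted functionals; in the latter case
\begin{equation*}
|\alpha(z)|\;\leqslant\;\frac{1}{n}\sum_{i=1}^d|h_i(z)|\;\leqslant\;\max_{1\leqslant i\leqslant d}|h_i(z)|,
\end{equation*}
so, passing to the appropriate $\pm h_i$ (which lies in $\WT$ by symmetry), one finds a non-average with value on $z$ at least $|\alpha(z)|$. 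Restricting to $\ran z$ preserves the value on $z$ and, by Remarks \ref{restrictions are in} and \ref{basics of norming}(ii), keeps the functional inside $\WT$ and in the class of non-averages. Combined with $\|z\|=\sup\{\alpha(z):\alpha\in\WT\}$, the observation follows.

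Using it, I pick a non-average $g\in\WT$ with $\ran g\subset\ran y$ and $g(y)>(8/9)\|y\|$. Setting $x'=(8/(9f(x)))x$, we have $g(x')=(8/9)g(x)/f(x)$, so condition (iii) reduces to $|g(x)|\leqslant f(x)$. I then distinguish two cases. If $|g(x)|>(8/9)\|x\|$, pick $\eta\in\{-1,+1\}$ so that $\eta g(x)=|g(x)|$ and set $f=\eta\cdot(g|_{\ran x})$: this is a non-average in $\WT$ with $\ran f\subset\ran x$ and $f(x)=|g(x)|>(8/9)\|x\|$, giving $|g(x')|=8/9$ exactly. Otherwise $|g(x)|\leqslant(8/9)\|x\|$, and I take any non-average $f$ with $\ran f\subset\ran x$ and $f(x)>(8/9)\|x\|$ (furnished by the key observation applied to $x$); then $|g(x')|\leqslant(8/9)|g(x)|/f(x)<8/9$.

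No serious obstacle arises. The only mild point needing verification is that the restriction $g|_{\ran x}$ is still a non-average, which is immediate from Remark \ref{restrictions are in}: each constituent \ac-average inside a weighted functional restricts to an \ac-average of the same type, while the very fast growing and $\Sn$-admissibility conditions, and the weight, are inherited by the surviving subsequence.
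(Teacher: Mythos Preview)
Your proof is correct and follows essentially the same approach as the paper: choose $g$ first to norm $y$, then split into cases on whether $|g(x)|>(8/9)\|x\|$, setting $f=\mathrm{sgn}(g(x))\,g|_{\ran x}$ in the first case and choosing $f$ independently in the second. You supply more justification than the paper for why one may always take a non-average achieving a given lower bound (the paper simply asserts this), and your verification that $g|_{\ran x}$ remains a weighted functional is a welcome detail; one small imprecision is that \ac-averages include basic averages, whose constituents are basis elements rather than weighted functionals, but since those are also non-averages your argument goes through unchanged.
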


\begin{proof}
Choose a non-average $g$ in $\WT$ with $g(y) > (8/9)\|y\|$. If $|g(x)|>(8/9)\|x\|$ define $f = \mathrm{sgn}(g(x)) g|_{\ran x}$ and observe that $f$, $g$ satisfy the conclusion. Otherwise $g(x) \leqslant (8/9)\|x\|$ and choose any non-average $f$ in $\WT$ with $f(x) > (8/9)\|x\|$ and $\ran f\subset \ran x$. A simple calculation yields that $f$, $g$ satisfy the conclusion.
\end{proof}

\begin{lem}\label{some more boring technical stuff}
Let $(f,x)$ be an $(9/8,8/9,n)$-exact pair in $\X$ and let also $\rho$ in $[-8/9,8/9]$. Then there is a weighted functional $g$ in $\WT$ of weight $w(g) = n$, so that $\ran g\subset \ran x$ and $|g(x) - \rho| < 1/2^{n+1}$.
\end{lem}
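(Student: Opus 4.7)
The plan is to approximate $\rho$ by sweeping initial-segment restrictions of $f$ (and their negatives). Write $f = (1/2^n)\sum_{q=1}^d \alpha_q$ where $(\alpha_q)_{q=1}^d$ is a very fast growing, $\mathcal{S}_n$-admissible sequence of $\alpha_c$-averages, and $f(x) = 8/9$. For each $N\inn$ define $g_N^+ = [1,N]f$ and $g_N^- = -g_N^+$. By Remarks \ref{restrictions are in} and \ref{basics of norming}(ii), and symmetry of $W_{\mathcal{T}}$, each $g_N^\pm$ can be written as $\pm(1/2^n)\sum_q \tilde\alpha_q$ with $\tilde\alpha_q = \pm[1,N]\alpha_q$ still an $\alpha_c$-average, the resulting sequence still very fast growing and $\mathcal{S}_n$-admissible; hence $g_N^\pm \in \WT$ is a weighted functional of weight $n$, with $\ran g_N^\pm\subset \ran f\subset \ran x$.

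Next I would track the sweep. Since $g_N^+(x) = \sum_{i\leqslant N} f(e_i^*)x(i)$, the sequence $\big(g_N^+(x)\big)_N$ moves from $0$ (for $N<\min\ran f$) to $f(x)=8/9$ (for $N\geqslant \max\ran f$), and the increment from step $N$ to $N+1$ is at most $\|f\|_\infty\|x\|_\infty$ in absolute value. A one-line induction on the layers $W_m$ of $\WT$ shows that every $h\in\WT$ satisfies $\|h\|_\infty\leqslant 1$, so each $\|\alpha_q\|_\infty \leqslant 1$ and, because the $\alpha_q$'s are successive, $\|f\|_\infty\leqslant 1/2^n$. Combined with Remark \ref{inf norm vector}, which gives $\|x\|_\infty < 1/(36\cdot 2^{2n})$, each increment is bounded by $1/(36\cdot 2^{3n}) < 1/2^{n+1}$.

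To finish, if $\rho\in[0,8/9]$ set $N^*=\max\{N : g_N^+(x)\leqslant \rho\}$. Then $g_{N^*}^+(x) \leqslant \rho < g_{N^*+1}^+(x) = g_{N^*}^+(x) + \Delta$ with $|\Delta|<1/2^{n+1}$, so $g = g_{N^*}^+$ works. If $\rho\in[-8/9,0]$, apply the same argument to $-\rho$ and take $g=-g_{N^*}^+ = g_{N^*}^-$.

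The only potentially delicate point is the bookkeeping in the first paragraph, namely that restricting $f$ to an initial interval and/or negating it preserves the property of being a weighted functional of weight $n$; but this is immediate from Remark \ref{restrictions are in} (each type of $\alpha_c$-average is closed under interval restriction and sign flip, using symmetry of $\WT$ to negate the inner weighted functionals), and the rest is a straightforward bound-and-sweep argument.
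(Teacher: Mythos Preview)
Your proof is correct and follows essentially the same approach as the paper: restrict $f$ to initial intervals (possibly with a sign flip) and use that consecutive values differ by less than $1/2^{n+1}$ to hit any target in $[-8/9,8/9]$. The paper's version is terser --- it bounds the increment by $\|x\|_\infty<1/2^{n+1}$ directly from Remark~\ref{inf norm vector} (since $\|f\|_\infty\leqslant 1$ already), rather than also invoking $\|f\|_\infty\leqslant 1/2^n$ --- and it leaves the sweep implicit, but the argument is the same.
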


\begin{proof}
By Remark \ref{inf norm vector}, we have that $\|x\|_\infty < 1/(2^{2n}36) < 1/2^{n+1}$. The fact that $f(x) = 8/9$ easily implies that there is an initial interval $E$ of $\ran f$ and $\e\in\{-1,1\}$, so that $g = \e Ef$ is the desired functional.
\end{proof}

The following result characterizes strictly singular operators, defined on subspaces of $\X$, in the following manner: an operator is strictly singular if and only if it does not preserve any type of spreading model. It is worth mentioning this we could neither prove nor disprove the same result in \cite{AM1}. The reason for this difference is the presence of $\be$-averages in that paper and their absence in the present one.

\begin{prp}\label{ss char}
Let $X$ be an infinite dimensional closed subspace of $\X$ and $T:X\rightarrow\X$ be a bounded linear operator. The following assertions are equivalent.
\begin{itemize}

\item[(i)] The operator $T$ is strictly singular.

\item[(ii)] There exists a normalized weakly null sequence $(y_k)_k$ in $X$ so that $(Ty_k)_k$ converges to zero in norm.

\item[(iii)] For every sequence $(x_k)_k$ in $X$ generating a $c_0$ spreading model, $(Tx_k)_k$ converges to zero in norm.

\item[(iv)] For every sequence $(x_k)_k$ in $X$ generating an $\ell_1$ spreading model, $(Tx_k)_k$ does not admit an $\ell_1$ spreading model.

\end{itemize}
\end{prp}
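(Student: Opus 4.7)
The plan is to prove the four-way equivalence via (i) $\Leftrightarrow$ (iii), (i) $\Leftrightarrow$ (iv), and (i) $\Leftrightarrow$ (ii), relying on the spreading-model dichotomy of Corollary \ref{sm vs adx}, the ubiquity of $c_0$ and $\ell_1$ spreading models in every infinite-dimensional subspace (Proposition \ref{all exist (ooooh)}), the $(C,\theta,n)$-vector machinery (Propositions \ref{if al positive} and \ref{vectors generate c0}, Remark \ref{exact vector norm}), the dependent-sequence estimate of Proposition \ref{dependent sum bounded}, and the HI-style construction of Theorem \ref{HI (hi right back atcha)}. The direction (i) $\Rightarrow$ (ii) is standard: separability of $X^*$ (Proposition \ref{shrinking}) together with the definition of strict singularity produces a normalized weakly null block-like sequence $(y_k)$ in $X$ with $\|Ty_k\|\to 0$. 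The deep step is (ii) $\Rightarrow$ (i).

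First I would prove (i) $\Leftrightarrow$ (iii). For (iii) $\Rightarrow$ (i), I argue contrapositively: if $T$ is not strictly singular then a closed infinite-dimensional $Z \subset X$ has $T|_Z$ bounded below by some $c>0$; Proposition \ref{all exist (ooooh)} applied to a block-perturbation of $Z$ produces a $c_0$-SM sequence $(x_k) \subset Z$ with $\|Tx_k\|\geqslant c\|x_k\|$ bounded away from zero, contradicting (iii). For (i) $\Rightarrow$ (iii), assume $(x_k)$ generates a $c_0$ SM in $X$ and $\|Tx_k\|\geqslant\delta>0$ along a subsequence. Then $(Tx_k)$ is weakly null and seminormalized, and by Corollary \ref{sm vs adx} admits after passing to a subsequence either an $\ell_1$-SM or a $c_0$-SM. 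In the $\ell_1$ case, Proposition \ref{if al positive} gives $\|\sum_{i=1}^m Tx_{k_i}\|\geqslant(\theta/2)m$ for $\mathcal{S}_1$-admissible subsets while the $c_0$-SM of $(x_k)$ keeps $\|\sum_{i=1}^m x_{k_i}\|$ uniformly bounded, forcing $\|T\|=\infty$. In the $c_0$ case, far-out subsequences of both $(x_k)$ and $(Tx_k)$ are $(1+\e)$-equivalent to the $c_0$ basis, and the bound $\|Tx_{k_i}\|\geqslant\delta$ makes $T$ bounded below on $\overline{\spn}(x_{k_i})$, contradicting strict singularity.

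For (i) $\Leftrightarrow$ (iv): the direction (iv) $\Rightarrow$ (i) parallels (iii) $\Rightarrow$ (i), using Proposition \ref{all exist (ooooh)} to produce an $\ell_1$-SM sequence inside $Z$ whose $\ell_1^n$ estimates from Proposition \ref{if al positive} transfer under $T|_Z$, contradicting (iv). For (i) $\Rightarrow$ (iv), assume $(x_k)$ generates an $\ell_1$ SM and, toward contradiction, $(Tx_k)$ also does; along a common subsequence, build the $(C,\theta,n)$-vectors $v_n = 2^n\sum c_i x_{k_i}$ via Proposition \ref{if al positive}, so $\|v_n\|\leqslant 7C$ by Remark \ref{exact vector norm}. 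By Proposition \ref{vectors generate c0} the sequence $(v_n)$ generates a $c_0$ SM, so the already-proved (i) $\Rightarrow$ (iii) gives $Tv_n \to 0$, while the $\ell_1^n$ estimates on $(Tx_k)$ yield $\|Tv_n\| = 2^n\|\sum c_i Tx_{k_i}\| \geqslant \theta' > 0$ uniformly, a contradiction.

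The main obstacle is (ii) $\Rightarrow$ (i). Suppose $(y_k)$ is weakly null with $Ty_k\to 0$ and $T$ is not strictly singular, so a block subspace $X_0 \subset X$ has $T|_{X_0}$ bounded below by some $c>0$; let $Y_0$ be a block subspace generated by a subsequence of $(y_k)$. For each large $n \in \N$, apply Lemma \ref{maximal dependent} to $X_0$ and $Y_0$ to construct a dependent sequence $\{(f_k,u_k)\}$ in $\T$ with odd-indexed $u_k\in X_0$ and even-indexed $u_k\in Y_0$; when applying Lemma \ref{bob the builder} to build the even $u_k$'s, I would take the constituent $y_i$'s so far out that $\|Tu_k\|<\e_k$ with $(\e_k)$ rapidly summable. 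Following Theorem \ref{HI (hi right back atcha)}, extract from this dependent sequence vectors $x = \sum_{k\text{ odd}}u_k \in X_0$ and $y = \sum_{k\text{ even}}u_k \in Y_0$ with $\|x+y\|\leqslant 53$ and $\|x-y\|\geqslant(4/9)n$. Since $\|Ty\|\leqslant\sum\e_k$ is negligible, the identity $Tx = T(x+y) - Ty$ yields $\|Tx\|\leqslant 53\|T\| + \sum\e_k$, and then $\|Tx\|\geqslant c\|x\|$ forces $\|x\|\leqslant(53\|T\| + \sum\e_k)/c$. On the other hand, the triangle inequality $\|x\| = \|\tfrac{1}{2}((x+y)+(x-y))\| \geqslant (\|x-y\| - \|x+y\|)/2 \geqslant (2/9)n - 53/2$ contradicts this upper bound for $n$ large. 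The hard part is coordinating the construction of exact vectors from two sources inside a single tree-dependent sequence while controlling $\|Tu_k\|$ on even indices, requiring Lemmas \ref{bob the builder} and \ref{maximal dependent}, Remark \ref{you might have to do it simultaneously}, and Proposition \ref{dependent sum bounded} in concert.
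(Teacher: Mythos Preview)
Your argument for (i) $\Rightarrow$ (iii) has a genuine gap in the case where both $(x_k)_k$ and $(Tx_k)_k$ generate $c_0$ spreading models. You claim that far-out subsequences become $(1+\e)$-equivalent to the $c_0$ basis, and hence $T$ is bounded below on their span. But generating a $c_0$ spreading model is only an asymptotic \emph{finite}-dimensional statement (estimates hold for $n\leqslant k_1<\cdots<k_n$); it does not give equivalence of the full infinite sequence to the $c_0$ basis. Indeed, since $\X$ is HI, $c_0$ does not embed, so no such equivalence can ever hold. This gap then propagates to your (i) $\Rightarrow$ (iv), which invokes the ``already-proved'' (i) $\Rightarrow$ (iii).

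The paper circumvents this entirely by proving the cycle (i) $\Rightarrow$ (ii) $\Rightarrow$ (iii) $\Rightarrow$ (iv) $\Rightarrow$ (i), placing the hard work in (ii) $\Rightarrow$ (iii). There, one assumes simultaneously a weakly null $(y_k)_k$ with $Ty_k\to 0$ and a $c_0$-spreading-model sequence $(x_k)_k$ with $\|Tx_k\|\geqslant\delta$, and builds a single dependent sequence whose odd entries come from $(x_k)_k$ and even entries from $(y_k)_k$. The contradiction is obtained not via the HI inequality $\|x-y\|\gg\|x+y\|$, but by constructing in parallel a second family of functionals $\phi_k$ (via Lemma~\ref{just a technicallity}, Remark~\ref{you might have to do it simultaneously}, and Lemma~\ref{some more boring technical stuff}) that witness the action on $Tz_k$; these are made into a compatible sequence so that a norming functional $\psi\in\WT$ gives $\|T(\sum_k z_k)\|>27\|T\|$ while $\|\sum_k z_k\|\leqslant 27$. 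Your final paragraph's (ii) $\Rightarrow$ (i) via the HI estimate is plausible (modulo the perturbation needed to pass from a subspace on which $T$ is bounded below to a genuine block subspace of $\X$), but it only yields (i) $\Leftrightarrow$ (ii); the implication (i) $\Rightarrow$ (iii) still requires the constructive argument you tried to shortcut.
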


\begin{proof}
That (i) implies (ii) follows from the fact that $\ell_1$ does not embed into $\X$ and that (iv) implies (i) follows from Proposition \ref{all exist (ooooh)}. We shall first demonstrate that (iii) implies (iv) and then that (ii) implies (iii).

We assume that (ii) is true and towards a contradiction assume that there is a sequence in $(x_k)_k$ in $X$, so that both $(x_k)_k$ and $(Tx_k)_k$ generate an $\ell_1$ spreading model. By taking differences, we may assume that both $(x_k)_k$ and $(Tx_k)_k$ are block sequences with $\al$-index positive. By Proposition \ref{if al positive} we may assume that there is $\theta>0$ so that both sequences generate an $\ell_1^n$ spreading model with a lower constant $\theta/2^n$ for all $n\inn$. Using the same Proposition, construct a block sequence $(y_k)_k$ of $(x_k)_k$, so that each $y_k$ is a $(C,\theta,n_k)$-vector and $\|Ty_k\| \geqslant \theta$ for all $k\inn$ with a $(n_k)_k$ a strictly increasing sequence of natural numbers. Proposition \ref{vectors generate c0} yields that $(y_k)_k$ admits only $c_0$ as a spreading model, which contradicts (ii).

We shall now prove that (ii) implies (iii). Toward a contradiction assume that there is normalized weakly null sequence $(y_k)_k$ in $X$ with $\lim_kTy_k = 0$ in norm, as well as a sequence $(x_k)_k$ in $X$ generating a $c_0$ spreading model, so that $(Tx_k)_k$ does not converge to zero in norm. By perturbing the operator $T$ we may assume that the following are satisfied:
\begin{itemize}

\item[(a)] $(y_k)_k$, $(x_k)_k$ and $(Tx_k)_k$ are all seminormalized block sequences with rational coefficients and

\item[(b)] $Ty_k = 0$ for all $k\inn$.

\end{itemize}

For each $k\inn$, choose $f_k$ and $g_k$ so that the conclusion of Lemma \ref{just a technicallity} is satisfied, i.e. $\ran f_k\subset\ran x_k$, $\ran g_k \subset \ran Tx_k$, $f_k(x_k) > (8/9)\|x_k\|$, $g_k(Tx_k) > (8/9)\|Tx_k\|$ and $|g_k((8/9f_k(x_k))x_k)| \leqslant 8/9$. Hence, if for all $k$ we set $x_k' = (8/9f_k(x_k))x_k$ and $\theta = (8/9)^2\inf_k\|Tx_k\|/\sup_k\|x_k\| > 0$, then for all $k\inn$:
\begin{itemize}

\item[(c)] $\ran f_k\subset\ran x_k'$, $\ran g_k \subset \ran Tx_k'$,

\item[(d)] $f_k(x_k') = 8/9$, $g_k(Tx_k') \geqslant \theta$ and

\item[(e)] $|g_k(x_k')| \leqslant 8/9$.

\end{itemize}
We note that the boundedness of $T$ yields that $(Tx_k)_k$ admits only $c_0$ as a spreading model, combining this with $g_k(Tx_k) > (8/9)\|Tx_k\|$ for all $k\inn$ and Lemma \ref{if c0 then weights are unbounded} we obtain that
\begin{itemize}

\item[(f)] for each $n\inn$, the set of $k$'s so that $g_k$ is a weighted functional of weight $w(g_k) = n$ is finite.

\end{itemize}
We pass to a subsequence, so that there is $\rho$ in $[-8/9,8/9]$ so that
\begin{itemize}

\item[(g)] $|g_k(x_k')- \rho| < 1/2^{k+1}$ for all $k\inn$.

\end{itemize}

Let now $n\inn$ with $n > 162\|T\|/\theta$. We construct a $(9/8,8/9)$-dependent sequence $\{(h_k,z_k)\}_{k=1}^m$ with the following properties:
\begin{itemize}

\item[(h)] $\min\supp h_1 \geqslant n$ and $(h_k)_{k=1}^m$ is $\mathcal{S}_2$-admissible,

\item[(j)] There is a partition of $\N$ into successive intervals $(G_k)_k$ and successive subsets of the natural numbers $(F_j)_j$ as well as a sequence of signs $(\e_i)_i$ so that for $k$ odd:
 \begin{eqnarray*}
 z_k &=& 2^{w(h_k)}\sum_{j\in G_k}c_j\left(\sum_{i\in F_j}\e_ix_i'\right)\\
 h_k &=& \frac{1}{2^{w(h_k)}}\sum_{j\in G_k}\frac{1}{\#F_j}\sum_{i\in F_j}\e_if_i,
 \end{eqnarray*}

\item[(k)] for $k$ odd the functional
$$\phi_k = \frac{1}{2^{w(h_k)}}\sum_{j\in G_k}\frac{1}{\#F_j}\sum_{i\in F_j}\e_ig_i$$
is a weighted functional in $\WT$ of weight $w(\phi_k) = w(h_k)$ and

\item[(l)] for $k$ even, $\ran \phi_{k-1} < \ran z_k < \ran \phi_{k+1}$ and $z_k$ is a linear combination of the $(y_k)_k$.

\end{itemize}
Note that in the construction for $k$ odd we use Lemma \ref{bob the builder}, (f) and Remark \ref{you might have to do it simultaneously}. For $k$ even we just use Lemma \ref{vectors exist} while the fact that we continue this process until $(h_k)_{k=1}^m$ is $\mathcal{S}_2$-admissible follows from properties (ii) and (iii) from Subsection \ref{subtrees}.

Proposition \ref{dependent sum bounded} yields $\|\sum_{k=1}^mz_k\| \leqslant 27$. We will finish the proof by showing that $\|T(\sum_{k=1}^mz_k)\| > 27\|T\|$, which is absurd.

For $k$ even, by Lemma \ref{some more boring technical stuff}, we may choose $\phi_k$ in $\WT$ with $\ran \phi_k\subset \ran z_k$ and $|\phi_k(z_k) - \rho| < 1/2^{w(h_k)+1} \leqslant 1/2^{k+1}$. Moreover, (g), (j) and (k) yield that for $k$ odd, $|\phi_k(z_k) - \rho| < 1/2^{k+1}$ as well. We conclude:
\begin{itemize}

\item[(m)] $|\phi_k(z_k) - \phi_{k'}(z_{k'})| < 1/2^k$ for $1\leqslant k\leqslant k'\leqslant m$.

\end{itemize}
Since $\{(h_k,z_k)\}_{k=1}^m$ is in $\T$ and $\phi_k$ is a functional of weight $w(h_k)$ for $k=1,\ldots,m$ by (l) and (m) we conclude that the sequence $(\phi_k)_{k=1}^m$ is compatible, in the sense of Definition \ref{def averages}.
Arguing identically as in the proof of Theorem \ref{HI (hi right back atcha)}, for the already fixed $n$ we may choose a partition of $\{1,\ldots,m\}$ into successive intervals $(E_q)_{q=1}^n$ so that if $\al_q = (1/\#E_q)\sum_{k\in E_q}(-1)^{k+1}\phi_k$, then the sequence $(\al_q)_{q=1}^d$ is a very fast growing and $\mathcal{S}_1$-admissible of \ac-averages of $\WT$. Define $\psi = (1/2)\sum_{q=1}^n\al_q$ which is in $\WT$. Then, by (b) and (l) $T(\sum_{k=1}^mz_k) = \sum_{k\; \text{odd}}Tz_k$. By (d), (j) and (k) we obtain:
\begin{eqnarray*}
\left\|T\left(\sum_{k=1}^mz_k\right)\right\| &=& \left\|\sum_{k\;\text{odd}}Tz_k\right\| \geqslant \psi\left(\sum_{k\; \text{odd}}Tz_k\right)\\
&=& \frac{1}{2}\sum_{q=1}^n\frac{1}{\#E_q}\sum_{\text{odd}\; k\in E_q} \phi_k(Tz_k) \geqslant \frac{\theta}{2}\frac{n}{3} > 27\|T\|.
\end{eqnarray*}
\end{proof}

We remind that in \cite[Theorem 5.19]{AM1} it is proved that the composition of any triple of strictly singular operators, defined on a subspace of $\mathfrak{X}_{_\text{ISP}}$, is a compact one. We were unable to determine whether that result is optimal or if it could be stated for couples of strictly singular operators. As we commented before Proposition \ref{ss char}, the construction of the space $\mathfrak{X}_{_\text{ISP}}$ form \cite{AM1} uses $\be$-averages while the present one does not. A direct consequence of this difference is that in the case of the space $\X$ we can prove the following.

\begin{thm}\label{compact squares}
Let $X$ be a closed subspace of $\X$ and $S$, $T:X\rightarrow X$ be strictly singular operators. Then the composition $TS$ is a compact operator.
\end{thm}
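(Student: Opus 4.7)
The plan is twofold: first, to show that $TS$ maps every weakly null sequence in $X$ to a norm null one, and second, to deduce compactness using the fact that $X$ does not contain $\ell_1$.

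For the first step, let $(x_k)_k$ be a seminormalized weakly null sequence in $X$. By a standard sliding hump argument we pass to a subsequence equivalent to a normalized block sequence of $\X$, and then by Corollary \ref{sm vs adx} we may assume $(x_k)_k$ generates a spreading model equivalent to either the unit vector basis of $c_0$ or that of $\ell_1$. If the spreading model is $c_0$, Proposition \ref{ss char} (iii) applied to the strictly singular operator $S$ yields $Sx_k\to 0$ in norm, and hence $TSx_k\to 0$.

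If instead the spreading model is $\ell_1$, then Proposition \ref{ss char} (iv) applied to $S$ ensures that the weakly null sequence $(Sx_k)_k$ admits no $\ell_1$ spreading model. Passing to a further subsequence, either $Sx_k\to 0$ in norm, in which case we are done, or $(Sx_k)_k$ is seminormalized. In the latter case we apply a sliding hump argument once more and invoke Corollary \ref{sm vs adx}: since the $\ell_1$ alternative has been excluded, a further subsequence of $(Sx_k)_k$ must generate a $c_0$ spreading model, and Proposition \ref{ss char} (iii) applied to $T$ then gives $TSx_k\to 0$ in norm along that subsequence. Since any subsequence of the original $(x_k)_k$ admits a sub-subsequence mapped to $0$ in norm by $TS$, the full sequence $(TSx_k)_k$ converges to $0$ in norm.

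For the second step, by Proposition \ref{shrinking} the basis of $\X$ is shrinking, so $\X$ has separable dual and therefore contains no isomorphic copy of $\ell_1$. By Rosenthal's $\ell_1$ theorem, every bounded sequence in $X$ admits a weakly Cauchy subsequence. If $TS$ were not compact, there would exist a bounded sequence $(x_k)_k$ in $X$ and $\e>0$ such that, after extracting a weakly Cauchy subsequence $(x_{k_i})_i$, one would have $\|TSx_{k_{2j}} - TSx_{k_{2j-1}}\|\geqslant \e$ for all $j$. The differences $x_{k_{2j}} - x_{k_{2j-1}}$ form a weakly null sequence, which by Step 1 is mapped to norm null by $TS$, a contradiction.

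The main obstacle is the $\ell_1$ case of Step 1: after applying $S$ one must exploit that the $\ell_1$ spreading model has been destroyed, and only then flip back to the $c_0$ characterization to handle $T$. This is precisely where the present construction improves upon \cite{AM1}: the absence of $\beta$-averages in $\WT$ is what makes Proposition \ref{ss char} (iv) valid, and therefore permits the argument to close after two strictly singular operators rather than three.
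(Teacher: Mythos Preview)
Your proof is correct and follows essentially the same route as the paper's: reduce compactness to the statement that $TS$ sends weakly null sequences to norm null ones (via the absence of $\ell_1$ and Rosenthal's theorem), then split into the $c_0$ and $\ell_1$ spreading-model cases and apply Proposition \ref{ss char} (iii) and (iv) exactly as the paper does. The only difference is that you spell out the sliding-hump reduction to block sequences and the Rosenthal argument explicitly, whereas the paper compresses both into a single sentence.
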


\begin{proof}
Since $\ell_1$ does not embed into $\X$, it suffices to show that $TS$ maps weakly null sequences to norm null ones and $(x_k)_k$ be a weakly null sequence in $X$. If it is norm null then there is nothing more to prove. Otherwise, it either admits a $c_0$ or an $\ell_1$ spreading model. If the first one holds, then by Proposition \ref{ss char}  $(Sx_k)_k$ has a subsequence which is norm null. If on the other hand $(x_k)_k$ admits an $\ell_1$ spreading model then, passing to subsequence, $(Sx_k)_k$ is either norm null, or it generates a $c_0$ spreading model and hence, arguing as above, we obtain that $(TSx_k)_k$ is norm null.
\end{proof}

\begin{cor}\label{ss inv}
Let $X$ be an infinite dimensional closed subspace of $\X$ and $S:X\rightarrow X$ be a non-zero strictly singular operator. Then $S$ admits a non-trivial closed hyperinvariant subspace.
\end{cor}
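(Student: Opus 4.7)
The plan is to reduce this to the classical Lomonosov invariant subspace theorem, using Theorem~\ref{compact squares} as the bridge. Since $S:X\to X$ is strictly singular, applying Theorem~\ref{compact squares} with $T=S$ yields that $S^2$ is a compact operator on $X$.

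First I would split into two cases according to whether $S^2$ vanishes. If $S^2=0$, then $\ker S$ is a non-trivial closed subspace: it is proper because $S\ne 0$ forces $\ker S\ne X$, and it contains the non-zero subspace $\ran S$ because $S^2=0$ means $\ran S\subseteq \ker S$. Moreover, $\ker S$ is automatically hyperinvariant for $S$, since whenever $R\in\mathcal{L}(X)$ commutes with $S$ and $Sx=0$, we have $S(Rx)=R(Sx)=0$, so $Rx\in\ker S$. This handles the degenerate case with no further work.

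In the remaining case $S^2\ne 0$, the operator $K:=S^2$ is a non-zero compact operator on the infinite-dimensional space $X$. I would invoke Lomonosov's theorem in its hyperinvariant form: any non-zero compact operator on a Banach space of dimension $\geqslant 2$ admits a non-trivial closed subspace $Y\subset X$ that is invariant under every operator in its commutant $\{K\}'$. The key observation is now purely algebraic, namely the inclusion of commutants $\{S\}'\subseteq\{S^2\}'=\{K\}'$: any operator commuting with $S$ automatically commutes with $S^2$. Consequently, the subspace $Y$ furnished by Lomonosov is invariant under every operator commuting with $S$, i.e.\ $Y$ is a non-trivial closed hyperinvariant subspace for $S$.

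There is essentially no technical obstacle, since the deep work has already been done in Theorem~\ref{compact squares}; the only point requiring care is verifying that a hyperinvariant subspace for $S^2$ is automatically hyperinvariant for $S$, which follows from the trivial inclusion of commutants noted above. The statement is formulated for real scalars, but Lomonosov's theorem applies equally in this setting (as the authors also mention in the remarks following Theorem~\ref{b} regarding the complex version); combining the two cases yields the corollary.
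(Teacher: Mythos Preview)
Your proof is correct and follows essentially the same line as the paper: both split on whether $S^2=0$, handle the degenerate case via $\ker S$, and otherwise exploit the compactness of $S^2$ (Theorem~\ref{compact squares}) together with a Lomonosov-type result. The only small difference is in the final step: the paper applies Sirotkin's real-scalar version \cite{Sir} (see also \cite{H}) to $S$ itself, as an operator commuting with the non-zero compact $S^2$, and checks the hypothesis $(\alpha I-S)^2+\beta^2 I\neq 0$ for $\beta\neq 0$ using that $S$ is strictly singular; you instead apply the hyperinvariant form of Lomonosov directly to $K=S^2$ and then pass to $S$ via the commutant inclusion $\{S\}'\subseteq\{K\}'$. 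Your route is arguably cleaner, but your justification for the real-scalar case is slightly misdirected: the remarks after Theorem~\ref{b} concern the scalar-plus-strictly-singular property in the complex setting, not Lomonosov over $\mathbb{R}$. The statement you need (a non-zero compact operator on an infinite-dimensional real Banach space has a non-trivial closed hyperinvariant subspace) is indeed true and is exactly what \cite{H} and \cite{Sir} establish, so citing those would close the gap.
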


\begin{proof}
Assume first that $S^2 = 0$. Then it is straightforward to check that $\ker S$ is a non-trivial closed hyperinvariant subspace of $S$. Otherwise, if $S^2 \neq 0$, then Theorem \ref{compact squares} yields that $S^2$ is compact and non-zero. Since $S$ commutes with its square, by  \cite[Theorem 2.1]{Sir}, it is sufficient to check that for any $\al$, $\be\in\mathbb{R}$ with $\be\neq 0$, we have $(\al I - S)^2 + \be^2I \neq 0$ (see also \cite[Theorem 2]{H}). The fact that $S$ is strictly singular, easily implies that this condition is satisfied.
\end{proof}

\begin{lem}\label{weaker version of the classical old argument, just without the convex combinations}
Let $(x_k)_k$ be a seminormalized block sequence in $\X$ with $\adxk = 0$ and $X = [(x_k)_k]$. Let $T:X\rightarrow\X$ be a linear operator and assume that there exist $\e>0$ and a sequence of successive non-averages $(g_k)_k$ in $\WT$ satisfying the following:
\begin{itemize}

\item[(i)] $g_k(Tx_k) > \e$ and $g_k(x_k) = 0$ for all $k\inn$ and

\item[(ii)] for all $n\inn$ the set of $k$'s so that $g_k$ is a weighted functional of weight $w(f_k) = n$ is finite.

\end{itemize}
Then $T$ is unbounded.
\end{lem}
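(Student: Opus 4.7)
The plan is to argue by contradiction. Suppose $T$ were bounded, say $\|T\|=M$. I would produce, for every large $d\in\N$, a vector $y\in X$ with $\|y\|\leqslant 2$ and a functional $\psi\in\WT$ with $\psi(Ty)\geqslant d\e/8$, whence $\|T\|\geqslant d\e/16$, contradicting boundedness as $d\to\infty$.

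First I would prepare the sequences by two successive subsequence extractions. Since the basis of $\X$ is shrinking (Proposition~\ref{shrinking}), the seminormalized block sequence $(x_k)$ is weakly null in $X$, and boundedness of $T$ makes $(Tx_k)$ weakly null in $\X$; moreover the $(g_k)$, being successive with $\min\ran g_k\to\infty$, are weak$^*$-null in $\X^*$. A diagonal extraction therefore yields a subsequence, still written $(x_k,g_k)$, along which $|g_k(Tx_\ell)|<2^{-(k+\ell)}$ for all $k\neq\ell$. A further passage via Lemma~\ref{you can always find large enough averages of large norm} then ensures that for every finite $F\subset\N$ and a suitable sign pattern $(\e_i)_{i\in F}$ (arbitrary in the incomparable or irrelevant cases, alternating in the comparable case) the functional $\tfrac{1}{\#F}\sum_{i\in F}\e_i g_i$ is an \ac-average in $\WT$.

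For the construction I would fix a large $d$ and choose successive finite index sets $F_1<\cdots<F_d$ with sign patterns $(\e_k)_{k\in F_q}$ as above, and set
\[
\al_q = \frac{1}{\#F_q}\sum_{k\in F_q}\e_k g_k,\qquad y_q = \sum_{k\in F_q}\e_k x_k,\qquad y = \sum_{q=1}^d y_q.
\]
A bookkeeping argument lets me arrange simultaneously that $\#F_q\geqslant 16d/\e$ for every $q$, that $(\al_q)_{q=1}^d$ is $\mathcal{S}_1$-admissible and very fast growing, and that all $x$-indices in use exceed $N:=\sum_q\#F_q$; then $\psi:=\tfrac{1}{2}\sum_q\al_q$ belongs to $\WT$. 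Matching the signs between $\al_q$ and $y_q$ makes the diagonal contributions add constructively, and the summability $|g_k(Tx_\ell)|<2^{-(k+\ell)}$ absorbs the off-diagonals; one checks $\al_q(Ty_q)>\e/2$ (the diagonal piece is $\#F_q^{-1}\sum_{k\in F_q}g_k(Tx_k)>\e$ and the off-diagonal is bounded by $4/\#F_q$), while $|\al_q(Ty_{q'})|<2/\#F_q$ for $q\neq q'$, so that $\psi(Ty)>d\e/8$. On the other hand Proposition~\ref{if al zero}, applied to the $(x_k)$-subsequence re-indexed so that $\min F_1\geqslant N$, gives $\|y\|<1+3/2^N<2$, and the contradiction follows.

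The main obstacle will be the simultaneous scheduling of the index choices: $\min F_1$ must be at once large enough to guarantee $\mathcal{S}_1$-admissibility of $(\al_q)$, to render Proposition~\ref{if al zero} applicable to $y$, and to absorb the super-exponential growth of $\#F_q$ forced by the very-fast-growing condition on $(\al_q)$; this is possible by a careful diagonal choice. A secondary technical point is, in each of the three cases delivered by Lemma~\ref{you can always find large enough averages of large norm}, to pick a sign pattern $(\e_k)$ that simultaneously makes $\al_q$ an \ac-average and makes $y_q$ amenable to Proposition~\ref{if al zero}'s $c_0$-spreading estimate (any signs in the \ic\ and \ir\ cases; alternating in the \co\ case), so that in every case $\e_k^2=1$ on the diagonal produces the uniform lower bound on $\al_q(Ty_q)$.
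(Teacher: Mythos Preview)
Your scheduling is impossible, and this is the crux of the matter. You require simultaneously that $(\al_q)_{q=1}^d$ be very fast growing and that $\min F_1\geqslant N=\sum_q\#F_q$ so as to invoke Proposition~\ref{if al zero} on the full sum $y$. But after any subsequence extraction the successive functionals $(g_k)_k$ still satisfy $\max\supp g_k\geqslant k$, so the very-fast-growing condition forces
\[
\#F_2 > 2^{\max\supp\al_1} = 2^{\max\supp g_{\max F_1}} \geqslant 2^{\max F_1}\geqslant 2^{\min F_1},
\]
whence $N\geqslant\#F_2>2^{\min F_1}$, flatly contradicting $\min F_1\geqslant N$. No ``careful diagonal choice'' rescues this: the constraint that the number of $x$-terms not exceed the first index collides head-on with the super-exponential growth of the sizes $\#F_q$ imposed by the norming set. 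Consequently your bound $\|y\|<2$ is unavailable, and without it the inequality $\psi(Ty)>d\e/8$ yields nothing.

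This is precisely why the paper does not attempt to control $\|\sum_k\pm x_k\|$ directly via the $c_0$ spreading model. Instead it passes through the exact-vector machinery: using Lemma~\ref{bob the builder} (together with Remark~\ref{you might have to do it simultaneously}) it builds from the $x_i$'s a $(9/8,8/9)$-dependent sequence $\{(h_k,z_k)\}_{k=1}^m$ while simultaneously constructing companion weighted functionals $\phi_k$ of weight $w(h_k)$ from the $g_i$'s. The uniform bound $\|\sum_k z_k\|\leqslant 27$ then comes from Proposition~\ref{dependent sum bounded}, which is a deep estimate tailored to dependent sequences and does \emph{not} rely on any $\min F\geqslant\#F$-type admissibility of the underlying $x$-indices. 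The hypothesis $g_k(x_k)=0$ is used to force $\phi_k(z_k)=0$, making $(\phi_k)_k$ comparable in the sense of Definition~\ref{def averages}; one then argues as in Theorem~\ref{HI (hi right back atcha)} to produce $\psi\in\WT$ with $\psi(T\sum_k z_k)>27\|T\|$. Your lower bound on $\psi(Ty)$ is fine, but the missing ingredient is a mechanism for the upper bound on the vector, and for that one cannot avoid the dependent-sequence estimate.
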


\begin{proof}
Towards a contradiction we assume that $T$ is bounded. We may assume that the $x_k$'s have rational coefficients. Choose a sequence of non averages in $\WT$ so that $\ran f_k\subset\ran x_k$ and $f_k(x_k)>(8/9)\|x_k\|$ for all $k\inn$. For all $k\inn$ define $x_k' = (8/(9f_k(x_k)))x_k$ and set $\theta = (8\e)/(9\sup\|x_k\|) > 0$ and observe the following:
\begin{itemize}

\item[(a)] $g_k(x_k') = 0$ for all $k\inn$ and

\item[(b)] $g_k(Tx_k') \geqslant \e$ for all $k\inn$.

\end{itemize}
Let now $n\inn$ with $n > 54\|T\|/\theta$. We construct a $(9/8,8/9)$-dependent sequence $\{(h_k,z_k)\}_{k=1}^m$ so that $\min\supp h_1 \geqslant n$, $(h_k)_{k=1}^m$ is $\mathcal{S}_2$-admissible, there is a partition of $\N$ into successive intervals $(G_k)_k$ and successive subsets of the natural numbers $(F_j)_j$ as well as a sequence of signs $(\e_i)_i$ so that for $k=1,\ldots,m$:
\begin{eqnarray*}
 z_k &=& 2^{w(h_k)}\sum_{j\in G_k}c_j\left(\sum_{i\in F_j}\e_ix_i'\right)\\
 h_k &=& \frac{1}{2^{w(h_k)}}\sum_{j\in G_k}\frac{1}{\#F_j}\sum_{i\in F_j}\e_if_i,
\end{eqnarray*}
and the functional
$$\phi_k = \frac{1}{2^{w(h_k)}}\sum_{j\in G_k}\frac{1}{\#F_j}\sum_{i\in F_j}\e_ig_i$$
is a weighted functional in $\WT$ of weight $w(\phi_k) = w(h_k)$. Note that by (a)
\begin{itemize}
\item[(c)] $\phi_k(z_k) = 0$ for $k=1,\ldots,m$.
\end{itemize}

Proposition \ref{dependent sum bounded} yields that $\|\sum_{k=1}^mz_k\| \leqslant 27$. We will show that also $\|T(\sum_{k=1}^mz_k)\| > 27\|T\|$, which will complete the proof.

Since $\{(h_k,z_k)\}_{k=1}^m$ is in $\T$ and $\phi_k$ is a functional of weight $w(h_k)$ for $k=1,\ldots,m$ by (c) we easily conclude that the sequence $((-1)^k\phi_k)_{k=1}^m$ is compatible, in the sense of Definition \ref{def averages}. Arguing in the proof of Theorem \ref{HI (hi right back atcha)} we choose a partition of $\{1,\ldots,m\}$ into successive intervals $(E_q)_{q=1}^n$ so that if $\al_q = (1/\#E_q)\sum_{k\in E_q}\phi_k$, then the sequence $(\al_q)_{q=1}^d$ is a very fast growing and $\mathcal{S}_1$-admissible of \ac-averages of $\WT$. An argument similar to that used in the end of the proof of Proposition \ref{ss char} yields $\|\sum_{k=1}^mTz_k\| > n\theta/2 > 27\|T\|$.
\end{proof}

\begin{rmk}\label{some index stuff}
If $E$ is an interval of $\N$, we denote by $P_E$ the projection onto $E$, associated with the Schauder basis $(e_i)_i$ of $\X$. It easily follows that if $(x_k)_k$, $(y_k)_k$ are block sequences in $\X$, then
\begin{itemize}

\item[(i)] if $\adxk = 0$ and $(E_k)_k$ is a sequence of successive intervals of the natural numbers, then $\al((P_{E_k}x_k)_k) = 0$.

\item[(ii)] if $\adxk = 0$ and $\adyk = 0$, then $\al((x_k + y_k)_k) = 0$.

\end{itemize}
\end{rmk}

\begin{lem}\label{yeh yeh, i got it}
Let $(x_k)_k$ be a seminormalized block sequence in $\X$ and $X = [(x_k)_k]$. Let $T:X\rightarrow\X$ be a bounded linear operator and for each $k\inn$ set $y_k = P_{\ran x_k}Tx_k$. If the sequence $(y_k)_k$ is norm null, then $T$ is strictly singular.
\end{lem}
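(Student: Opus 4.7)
The plan is to argue by contradiction, assuming $T$ is not strictly singular and then setting up the hypotheses of Lemma~\ref{weaker version of the classical old argument, just without the convex combinations} to conclude that $T$ must be unbounded. First I would perform a compact perturbation: after passing to a subsequence of $(x_k)$ with $\sum_k \|y_k\| < \infty$, the assignment $x_k \mapsto y_k$ extends to a bounded operator $D : X \to \X$ (the coordinate map $X \to c_0$ is bounded since the basis is shrinking by Proposition~\ref{shrinking}, and the summation $c_0 \to \X$, $(c_k) \mapsto \sum c_k y_k$, is bounded and compact because $\sum \|y_k\| < \infty$), so $D$ is compact and in particular strictly singular. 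Since strict singularity is stable under sums it suffices to show $T - D$ is strictly singular; after this reduction we may assume $T x_k$ is supported outside $\ran x_k$ for every $k$.

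Next, still assuming $T$ is not strictly singular, Proposition~\ref{ss char}(iv) supplies a seminormalized block $(v_k)$ of $(x_k)$ with both $(v_k)$ and $(Tv_k)$ generating $\ell_1$ spreading models; Proposition~\ref{if al positive} then furnishes a common $\ell_1^n$ lower constant $\theta/2^n$. After a sliding-hump perturbation of $(Tv_k)$ into a block sequence, justified by Proposition~\ref{shrinking}, I would apply Proposition~\ref{if al positive} once more to obtain $(C,\theta,n_k)$-exact vectors $(u_k)$ in $[(v_k)]$ with $n_k \uparrow \infty$, chosen so that the defining s.c.c.\ of $u_k$ is simultaneously a basic s.c.c.\ for $(Tv_{j(k,\cdot)})$; this makes $Tu_k$ a $(C',\theta',n_k)$-vector of the perturbed $(Tv_k)$. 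Proposition~\ref{vectors generate c0} then gives $\al((u_k)) = \al((Tu_k)) = 0$, while the $\ell_1^{n_k}$ estimate forces $\|Tu_k\| \geq \theta'$.

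The main technical obstacle is to exhibit successive intervals $E_k$ disjoint from $\supp u_k$ for which $\|P_{E_k} Tu_k\| \geq c$ uniformly in $k$. Writing $u_k = \sum_{l \in G_k} d_{k,l} x_l$ and $Tu_k = \sum_l d_{k,l} Tx_l$, each $Tx_l$ is supported outside $\ran x_l$ by the first reduction; using the shrinking basis and a further sparse subsequence each $Tx_l$ can, up to summable error, be treated as a block vector localized near $\ran x_l$. The cross contributions $\supp Tx_l \cap \supp x_{l'}$ with $l \ne l'$ are the crux, and their smallness is to be controlled by the RIS / $(n_k,\e_k)$-s.c.c.\ structure of $u_k$ together with the estimates of Section~\ref{section heavy estimates} (in the spirit of Lemmas~\ref{a very important estimate} and~\ref{large weights die} and Corollary~\ref{estimate on scc}). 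This is the crucial new estimate and the main technical hurdle.

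Once $\rho_k := P_{E_k} Tu_k$ has been produced, Remark~\ref{some index stuff}(i) transfers $\al$-index zero from $(Tu_k)$ to $(\rho_k)$. By the support-reduction principle from Remark~\ref{basics of norming}, I would choose non-averages $g_k \in \WT$ with $\supp g_k \subset \ran \rho_k \subset E_k$ and $g_k(\rho_k) \geq (3/4)\|\rho_k\|$; then $g_k(u_k) = 0$ automatically and $g_k(Tu_k) = g_k(\rho_k) \geq 3c/4$. Lemma~\ref{if c0 then weights are unbounded} applied to $(\rho_k)$ yields condition~(ii) of Lemma~\ref{weaker version of the classical old argument, just without the convex combinations}, and that lemma, applied to $(u_k)$ and $(g_k)$, forces $T$ to be unbounded, contradicting the hypothesis and completing the proof.
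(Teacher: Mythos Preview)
Your proposal has a genuine gap at precisely the point you flag as ``the main technical hurdle'': you do not prove that $\|P_{E_k}Tu_k\|\geq c$ for intervals $E_k$ disjoint from $\supp u_k$. Once $u_k=\sum_{l\in G_k}d_{k,l}x_l$ with $\#G_k$ large, the fact that each $Tx_l$ is supported off $\ran x_l$ gives no control over $P_{\supp u_k}Tu_k$, since $Tx_l$ may well live on $\ran x_{l'}$ for $l'\neq l$ in $G_k$. The RIS/s.c.c.\ estimates you invoke bound certain \emph{functional} actions on $u_k$, not such cross-support overlaps of $Tx_l$ with $x_{l'}$; there is no lemma in Section~\ref{section heavy estimates} doing this work. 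A secondary issue is that your claim $\al((Tu_k))=0$ via Proposition~\ref{vectors generate c0} requires $Tu_k$ to be a $(C',\theta',n_k)$-vector, which in turn needs the minimum supports of $(Tv_{j(k,\cdot)})$ to line up with those of $(v_{j(k,\cdot)})$; after the sliding-hump perturbation this is not automatic.

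The paper's proof avoids this difficulty entirely by reversing the order of operations. After the same compact perturbation (so $Tx_k=z_k+w_k$ with $z_k<x_k<w_k$), it splits into cases on the spreading model of $(x_k)$. In the $\ell_1$--$\ell_1$ case it observes, via Remark~\ref{some index stuff}, that $\al((z_k))>0$ or $\al((w_k))>0$, and the very fast growing \ac-averages $(\al_q)_{q\in F_k}$ \emph{witnessing} this positive index already have $\ran\al_q\subset\ran z_k$, hence are automatically disjoint from $\ran x_k$. The exact vectors $u_k$ are then built from $(x_k)$ while simultaneously packaging these $\al_q$'s into weighted functionals $g_k$ (of weight $n+n_k$), so that $g_k(u_k)=0$ and $g_k(Tu_k)>\delta/2^n$ come for free from support disjointness. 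No estimate on $P_{E_k}Tu_k$ is ever needed. Your route could perhaps be completed, but the paper's detour through $\al((z_k))>0$ is precisely what eliminates the obstacle you identified.
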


\begin{proof}
By Proposition \ref{ss char} it suffices to find a seminormalized weakly null sequence $(u_k)_k$ in $X$ so that $(Tu_k)_k$ is norm null. For all $k$ define $z_k = P_{[1,\min \ran x_k - 1]}Tx_k$ and $w_k = P_{[\max\ran x_k +1,\infty)]}Tx_k$. By perturbing $T$ and passing to a subsequence, we may assume that $Tx_k = z_k + w_k$  and $z_k < x_k < w_k$ for all $k\inn$. We distinguish three cases.

{\em Case 1:} $(x_k)_k$ admits a $c_0$ spreading model. We will show that $(Tx_k)_k$ is norm null. If this is not the case then, passing to a subsequence, either $(z_k)_k$ or $(w_k)_k$ is bounded below. We assume that the first one holds, set $\e  = (3/4)\inf\|z_k\|$ and for each $k$ choose $(g_k)_k$ with $\ran g_k \subset \ran z_k$ and $g_k(x_k) > (3/4)\|x_k\|$. By Remark \ref{some index stuff} we obtain that $\adzk = 0$ and by Lemma \ref{if c0 then weights are unbounded} we conclude that the assumptions of Lemma \ref{weaker version of the classical old argument, just without the convex combinations} are satisfied, i.e. $T$ is unbounded, which is absurd.

{\em Case 2:} $(x_k)_k$ admits an $\ell_1$ spreading model and $(Tx_k)_k$ does not, i.e. it is either norm null, or passing to a subsequence it generates a $c_0$ spreading model. In the first case we are done, in the second case choose a sequence of successive $\mathcal{S}_1$ sets $(F_k)_k$ with $\lim_k\#F_k = 0$ and for all $k$ define $u_k = (1/\#F_k)\sum_{i\in F_k}x_i$. Then $(u_k)_k$ is the desired sequence.

{\em Case 3:} by passing to a subsequence, both $(x_k)_k$ and $(Tx_k)_k$ generate an $\ell_1$  spreading model. Remark \ref{some index stuff} yields that either $\adzk > 0$ or $\adwk > 0$ and we shall assume that the first one holds. Passing to a subsequence, there are $n\inn$, $\de > 0$, a very fast growing sequence of \ac-averages $(\al_q)_q$ of $\WT$ and a sequence of successive subsets $(F_k)_k$ of $\N$, so that
\begin{itemize}

\item[(a)] $(\al_q)_{q\in F_k}$ is $\Sn$ admissible for all $k\inn$,

\item[(b)] $\ran \al_q\subset\ran z_k$ for all $q\in F_k$, $k\inn$ and

\item[(c)] $\sum_{q\in F_k}\al_q(z_k) > \de$ for all $k\inn$.

\end{itemize}
By Proposition \ref{if al positive}, there are $C\geqslant 1$, $\theta > 0$ and a block sequence $(u_k)_k$ so that for each $k$, $u_k = 2^{n_k}\sum_{j\in G_k}c_k x_k$ is a $(C, \theta, n_k)$-vector with $(n_k)$ strictly increasing. Using an argument involving Proposition \ref{basic scc exist in abundance}, Remark \ref{is still scc} and the spreading properties of the Schreier families, we may also chose the sets $G_k$ so that $(\al_q)_{q\in\cup_{j\in G_k}F_j}$ is $\mathcal{S}_{n + n_k}$ -admissible and hence, $g_k = (1/2^{n + n_k})\sum_{q\in\cup_{j\in G_k}F_j}\al_q$ is a weighted functional of weight $w(g_k) = n + n_k$ for all $k\inn$. By (b) we obtain $g_k(u_k) = 0$ and by(c) $g_k(Tu_k) > \de/2^n$ for all $k\inn$. Finally, combining these facts with Proposition \ref{vectors generate c0} we conclude that $(u_k)_k$ admits a $c_0$ spreading model, i.e. the assumptions of Lemma \ref{weaker version of the classical old argument, just without the convex combinations} are satisfied. This means that $T$ is unbounded, which is absurd.
\end{proof}

\begin{thm}\label{scalar plus ss}
Let $X$ be a block subspace of $\X$. Then for every bounded linear operator $T:X\rightarrow X$ there is a $\la\in\R$ so that $T - \la I$ is strictly singular.
\end{thm}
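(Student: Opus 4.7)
The plan is to apply Lemma \ref{yeh yeh, i got it} to $T-\la I$ for a suitably chosen scalar $\la$, which reduces strict singularity to the norm-convergence to zero of a certain diagonal projection on a block basis.

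First I would use Proposition \ref{all exist (ooooh)} to extract a normalized block sequence $(x_k)_k$ in $X$ with rational coefficients and $\adxk=0$. For each $k$ pick a non-average $f_k\in\WT$ with $\ran f_k\subseteq\ran x_k$ and $f_k(x_k)>8/9$, and set $\la_k=f_k(Tx_k)/f_k(x_k)$; this is bounded since $T$ is, so after passing to a subsequence $\la_k\to\la$. Writing $y_k=P_{\ran x_k}Tx_k$ and $v_k=y_k-\la_k x_k$ we have $f_k(v_k)=0$ and $P_{\ran x_k}(T-\la I)x_k=v_k+(\la_k-\la)x_k$. The left and right off-diagonal pieces of $Tx_k$ are shown to be norm-null by essentially the Case 1 argument in the proof of Lemma \ref{yeh yeh, i got it} (there $g_k$ supported outside $\ran x_k$ automatically gives $g_k(x_k)=0$, so Lemma \ref{weaker version of the classical old argument, just without the convex combinations} applies); it therefore remains to show $\|v_k\|\to 0$. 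Granted this, Lemma \ref{yeh yeh, i got it} yields that $T-\la I$ is strictly singular on $[x_k]$.

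The main step, and the main obstacle, is showing $\|v_k\|\to 0$. Suppose not, pass to a subsequence with $\|v_k\|\geq\de>0$, and pick non-averages $g_k\in\WT$ with $\ran g_k\subseteq\ran v_k$ and $g_k(v_k)>(3/4)\de$; by Lemma \ref{if c0 then weights are unbounded}, after a further subsequence the weights of the $g_k$'s satisfy the finiteness condition required by Lemma \ref{weaker version of the classical old argument, just without the convex combinations}. If one can arrange $g_k(x_k)=0$, then $g_k(Tx_k)=g_k(y_k)=g_k(v_k)+\la_k g_k(x_k)=g_k(v_k)>(3/4)\de$ and Lemma \ref{weaker version of the classical old argument, just without the convex combinations} forces $T$ to be unbounded, a contradiction. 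To arrange $g_k(x_k)=0$ I would refine $(x_k)_k$ to a sequence of $(C,\theta,n_k)$-exact vectors via Proposition \ref{if al positive} together with Proposition \ref{basic scc exist in abundance}, with $n_k\to\infty$ so that $\|x_k\|_\infty$ is arbitrarily small by Remark \ref{inf norm vector}. Splitting $v_k=P_{\supp x_k}v_k+P_{\ran x_k\setminus\supp x_k}v_k$, in the case where the latter term dominates one selects $g_k$ supported in $\ran x_k\setminus\supp x_k$, while in the former case the smallness of $\|x_k\|_\infty$ allows a controlled perturbation of $g_k$ within $\WT$ to zero out $g_k(x_k)$.

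Finally, to upgrade strict singularity of $T-\la I$ from $[x_k]$ to all of $X$, I would show that the scalar $\la$ does not depend on the block sequence chosen: if two block sequences of $X$ produced $\la\neq\la'$, Theorem \ref{HI (hi right back atcha)} would provide normalized combinations $u,u'$ from the two subspaces with $\|u+u'\|$ bounded and $\|u-u'\|$ arbitrarily large, and substituting the asymptotic identities $Tu\approx\la u$, $Tu'\approx\la'u'$ coming from the established strict singularities would force $\la=\la'$. Since every block sequence of $X$ contains a sub-block sequence with $c_0$ spreading model by Proposition \ref{all exist (ooooh)}, this common $\la$ works for every block subspace of $X$, and hence $T-\la I$ is strictly singular on $X$.
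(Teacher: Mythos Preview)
Your approach misses the key simplification that drives the paper's proof and, as a result, creates an obstacle that you do not actually overcome.

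The paper does \emph{not} pass to a sub-block sequence with $c_0$ spreading model. It uses the defining block basis $(x_k)_k$ of $X$ itself. Because $T:X\to X$, each $Tx_k$ lies in $X=[(x_j)_j]$, so it is a (block) linear combination of the $x_j$'s; since the $x_j$'s are successive, $P_{\ran x_k}Tx_k$ is automatically a scalar multiple of $x_k$, say $\lambda_k x_k$. Choosing an accumulation point $\lambda$ of $(\lambda_k)_k$ and passing to the corresponding subsequence makes $P_{\ran x_k}(T-\lambda I)x_k=(\lambda_k-\lambda)x_k\to 0$, and Lemma~\ref{yeh yeh, i got it} applies immediately. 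Proposition~\ref{ss char}(ii) then upgrades strict singularity from the subsequence span to all of $X$ (find a normalized weakly null sequence there sent to zero; it lives in $X$). No HI argument or uniqueness-of-$\lambda$ discussion is needed.

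By working with an auxiliary $c_0$-spreading-model sequence inside $X$, you lose this automatic identity, and that is what forces you to introduce $v_k=y_k-\lambda_k x_k$ and try to show $\|v_k\|\to 0$. Your proposed resolution of this step has two concrete problems. First, you invoke Proposition~\ref{if al positive} to ``refine $(x_k)_k$ to a sequence of $(C,\theta,n_k)$-exact vectors'', but that proposition applies only when $\adxk>0$, whereas you have arranged $\adxk=0$. Second, even granting exact vectors with small $\|x_k\|_\infty$, the claim that one can ``perturb $g_k$ within $\WT$'' to force $g_k(x_k)=0$ while keeping $g_k(v_k)$ bounded below is not substantiated: $\WT$ is not closed under the kind of modifications this would require, and Lemma~\ref{weaker version of the classical old argument, just without the convex combinations} needs $g_k(x_k)=0$ exactly, not merely small. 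So the ``main step'' remains a genuine gap.
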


\begin{proof}
Let $(x_k)_k$ be the normalized block sequence so that $X = [(x_k)_k]$. We may, of course, assume that $(x_k)_k$ is normalized and let $Q_{\{n\}}$ denote the projections associated with the basis $(x_n)_n$ of $X$, i.e. $Q_{\{n\}}x_m = \de_{n,m}$. Then for each $k\inn$, $Q_{\{k\}}Tx_k = \la_k x_k$ for some $\la_k\in\R$. Choose an accumulation point $\la$ of $(\la_k)_k$ and by Lemma \ref{yeh yeh, i got it} it easily follows that $T - \la I$ is strictly singular.
\end{proof}

\begin{rmk}\label{complex version}
The reason the above result cannot be stated for every closed subspace of $\X$, is that in the definition of the norming set $\WT$ it is not allowed to take $\al$-averages of convex combinations of elements of $\WT$. We note that the construction presented in this paper can also be used to obtain a space $\X^{\mathbb{C}}$ defined over the field of complex numbers. In that case, as it was proved in \cite[Theorem 18]{GM}, every subspace of $\X^{\mathbb{C}}$ satisfies the scalar plus strictly singular property. Therefore, compared to Theorem \ref{isp} which is stated for block subspaces of $\X$,  every closed subspace of $\X^{\mathbb{C}}$ satisfied the invariant subspace property.
\end{rmk}

\begin{thm}\label{isp}
Let $X$ be a block subspace of $\X$ and $T:X\rightarrow X$ be a non-scalar bounded linear operator. Then $T$ admits a non-trivial closed hyperinvariant subspace.
\end{thm}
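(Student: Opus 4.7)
The statement follows by combining two results already established in the paper. My plan is:

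First, apply Theorem \ref{scalar plus ss} to the operator $T$ to obtain a scalar $\lambda \in \mathbb{R}$ such that $S := T - \lambda I$ is strictly singular on $X$. Because $T$ is assumed to be non-scalar, the operator $S$ is non-zero. At this point we are in position to invoke Corollary \ref{ss inv}, which guarantees that any non-zero strictly singular operator on an infinite dimensional closed subspace of $\X$ (in particular on the block subspace $X$) admits a non-trivial closed hyperinvariant subspace $Y \subset X$.

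The remaining point is to observe that a hyperinvariant subspace for $S$ is also hyperinvariant for $T$. This is completely formal: an operator $R \in \mathcal{L}(X)$ commutes with $T$ if and only if it commutes with $S = T - \lambda I$, since $RT = TR$ is equivalent to $R(T - \lambda I) = (T - \lambda I)R$. Hence the commutants $\{T\}'$ and $\{S\}'$ coincide, so a subspace invariant under every element of $\{S\}'$ is automatically invariant under every element of $\{T\}'$. Therefore $Y$ is a non-trivial closed hyperinvariant subspace for $T$, which concludes the proof.

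There is essentially no obstacle here: the whole technical weight of the argument has been absorbed into Theorem \ref{scalar plus ss} (which reduces the situation to the strictly singular case) and into Corollary \ref{ss inv} (whose proof in turn rested on the fact, provided by Theorem \ref{compact squares}, that $S^{2}$ is compact, enabling the application of Sirotkin's/Lomonosov's theorem). The proof of Theorem \ref{isp} is merely the clean assembly of these two ingredients, using the trivial identification of the commutants of $T$ and $T - \lambda I$.
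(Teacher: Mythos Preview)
Your proof is correct and follows exactly the same route as the paper: apply Theorem~\ref{scalar plus ss} to write $T=\lambda I+S$ with $S$ strictly singular and non-zero, invoke Corollary~\ref{ss inv} to obtain a non-trivial closed hyperinvariant subspace for $S$, and note that the commutants of $T$ and $S$ coincide. The paper's proof is terser but identical in substance.
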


\begin{proof}
By Theorem \ref{scalar plus ss} there is a $\la\in\mathbb{R}$ so that the operator $S = T - \la I$ is strictly singular. Note that $S\neq 0$, otherwise $T$ would be a scalar operator. Corollary \ref{ss inv} yields that $S$ admits a non-trivial closed hyperinvariant subspace $Y$. It is straightforward to check that $Y$ is a hyperinvariant subspace for $T$.
\end{proof}

We note that the following property of the strictly singular operators on $\mathfrak{X}_\mathcal{U}$, was also proved for an HI space which appeared in \cite{AAT}.

\begin{thm}\label{scalar plus wc}
Let $X$ be a closed subspace of $\mathfrak{X}_\mathcal{U}$ and $T:X\rightarrow\mathfrak{X}_\mathcal{U}$ be a bounded linear operator. The following assertions are equivalent.
\begin{itemize}

\item[(i)] The operator $T$ is strictly singular.

\item[(ii)] The operator $T$ is weakly compact.

\end{itemize}
\end{thm}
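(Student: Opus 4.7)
The plan is to prove the two implications separately.

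The implication (ii) $\Rightarrow$ (i) is a standard consequence of Theorem \ref{no reflexive subspace}. If $T$ were weakly compact but not strictly singular, there would exist an infinite-dimensional closed subspace $Y$ of $X$ such that $T|_Y$ is an isomorphism onto $Z := T(Y)$. Weak compactness of $T$ forces $T(B_Y)$ to be relatively weakly compact, and since $T|_Y$ is an isomorphism, $T(B_Y)$ contains a ball of $Z$; hence $B_Z$ is relatively weakly compact, so $Z$ is a reflexive subspace of $\mathfrak{X}_\mathcal{U}$, contradicting Theorem \ref{no reflexive subspace}.

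The implication (i) $\Rightarrow$ (ii) will be proved by contradiction, exploiting the structural results on spreading models. Suppose $T$ is strictly singular but fails to be weakly compact, so there is a bounded sequence $(x_n)_n$ in $X$ such that $(Tx_n)_n$ admits no weakly convergent subsequence. Since the basis of $\mathfrak{X}_\mathcal{U}$ is shrinking (Proposition \ref{shrinking}), its dual is separable and hence $\ell_1$ does not embed into $\mathfrak{X}_\mathcal{U}$, nor into $X$. By Rosenthal's $\ell_1$-theorem, I may pass to a subsequence so that both $(x_n)_n$ and $(Tx_n)_n$ are weakly Cauchy; the latter is non-trivial weakly Cauchy by hypothesis, and since weak convergence of $(x_n)_n$ would pass through $T$, the former is non-trivial weakly Cauchy as well.

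The classification proved in the previous section says that every non-trivial weakly Cauchy sequence in $\mathfrak{X}_\mathcal{U}$ admits, on a subsequence, either an $\ell_1$ spreading model or the summing basis of $c_0$ as a spreading model. An $\ell_1$ spreading model yields, via a standard diagonal extraction, a subsequence equivalent to the $\ell_1$ basis, which is excluded by $\ell_1 \not\hookrightarrow \mathfrak{X}_\mathcal{U}$. Passing to a further subsequence, I may therefore assume that both $(x_n)_n$ and $(Tx_n)_n$ generate the summing basis of $c_0$ as spreading model. Setting $y_n = x_{2n} - x_{2n-1}$ produces a weakly null sequence in $X$, and a direct calculation using the summing basis norm shows that
\begin{equation*}
\left\|\sum_i a_i (s_{2i} - s_{2i-1})\right\|_{\mathrm{summing}} = \max_i |a_i|,
\end{equation*}
so $(y_n)_n$ generates the unit vector basis of $c_0$ as spreading model. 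Applying Proposition \ref{ss char}(iii) to the strictly singular $T$ yields $\|Ty_n\| \to 0$. On the other hand $Ty_n = Tx_{2n} - Tx_{2n-1}$, and the summing basis spreading model of $(Tx_n)_n$ gives $\|Tx_{2n} - Tx_{2n-1}\| \to \|s_2 - s_1\| = 1$, the desired contradiction.

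The main obstacle is to isolate, on the domain side, a spreading model to which the characterization of strict singularity (Proposition \ref{ss char}) can be applied: a priori $(x_n)_n$ carries only the summing basis of $c_0$ spreading model, which is conditional and not directly covered by that proposition. The key move is to differentiate consecutive pairs, which simultaneously produces a weakly null sequence and replaces the conditional summing basis by the unconditional $c_0$ spreading model, while on the image side the same pairing extracts precisely the norm lower bound that contradicts $\|Ty_n\|\to 0$.
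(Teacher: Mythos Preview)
Your argument for (ii)$\Rightarrow$(i) and the final contradiction via differences are both fine and match the paper's approach. The gap is in the middle step where you dispose of the $\ell_1$ spreading model case. The claim that an $\ell_1$ spreading model ``yields, via a standard diagonal extraction, a subsequence equivalent to the $\ell_1$ basis'' is simply false: a sequence can generate an $\ell_1$ spreading model without any subsequence being equivalent to the $\ell_1$ basis (the basis of Tsirelson space is the classical example). In this very space, the paper explicitly shows that every infinite-dimensional subspace of $\mathfrak{X}_\mathcal{U}$ contains non-trivial weakly Cauchy sequences admitting an $\ell_1$ spreading model, so the case you are trying to exclude actually occurs and cannot be ruled out by the non-embedding of $\ell_1$.

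The paper circumvents this as follows. On the domain side it invokes Lemma~\ref{summing on convex block}: any non-trivial weakly Cauchy sequence has a \emph{convex block sequence} generating the summing basis of $c_0$; replace $(x_k)_k$ by that convex block sequence (it is still non-trivial weakly Cauchy, and so is its image under $T$). On the image side one does not argue by exclusion at all: once $(x_k)_k$ generates the summing basis, boundedness of $T$ forces any spreading model of $(Tx_k)_k$ to be dominated by the summing basis, while non-trivial weak Cauchyness forces it to dominate the summing basis (the summing basis is the minimum conditional spreading norm, Remark~\ref{you cannot kill the summing basis}). Hence $(Tx_k)_k$ also generates the summing basis, and your difference argument then finishes the proof exactly as you wrote it.
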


\begin{proof}
The implication (ii)$\Rightarrow$(i) immediately follows from Theorem \ref{no reflexive subspace}. Assume now that $T$ is strictly singular and not weakly compact, which implies that there is a sequence $(x_k)_k$ in $X$ so that both $(x_k)_k$ and $(Tx_k)_k$ are non-trivial weakly Cauchy. By Lemma \ref{summing on convex block} we may assume that $(x_k)_k$ generates the summing basis of $c_0$ as a spreading model. Recall that the norm of the summing basis is the minimum conditional spreading norm and thus, we may assume that $(Tx_k)_k$ generates the summing basis of $c_0$ as a spreading model as well. We conclude that if $y_k = x_{2k-1} - x_{2k}$  for all $k$, then both $(y_k)_k$ and $(Ty_k)_k$ generate the unit vector basis of $c_0$ spreading model. Proposition \ref{ss char} yields a contradiction.
\end{proof}

\begin{rmk}
A proof identical to the one of \cite[Proposition 5.23]{AM1} yields that every infinite dimensional closed subspace $X$ of $\X$ admits non-compact strictly singular operators, in fact all such operators define a non-separable subset of $\mathcal{L}(X)$.
\end{rmk}

\end{document}